\documentclass[11pt, a4paper,leqno]{amsart}
\usepackage{amsmath,amsthm,amscd,amssymb,amsfonts, amsbsy}
\usepackage{stmaryrd}
\usepackage[english]{babel}
\usepackage{latexsym}
\usepackage{txfonts}
\usepackage{exscale}
\usepackage{bbm}
\usepackage{enumitem}
\usepackage{soul}
\usepackage{mathtools}
\mathtoolsset{showonlyrefs}

\usepackage[colorlinks,citecolor=red,pagebackref,hypertexnames=false]{hyperref}
\usepackage{color}

\calclayout
\allowdisplaybreaks

\theoremstyle{plain}
\newtheorem{theorem}[equation]{Theorem}
\newtheorem{lemma}[equation]{Lemma}

\theoremstyle{definition}
\newtheorem{definition}[equation]{Definition}

\theoremstyle{remark}
\newtheorem{remark}[equation]{Remark}

\newcommand{\Xbf}{\mathbf{X}}
\newcommand{\xbf}{\mathbf{x}}
\newcommand{\Ybf}{\mathbf{Y}}

\newcommand{\Zbf}{\mathbf{Z}}

\numberwithin{equation}{section}

\newcommand{\RR}{{\mathbb{R}}}
\newcommand{\ZZ}{{\mathbb{Z}}}

\newcommand{\dist}{\operatorname{dist}}

\newcommand{\re}{\mathbb{R}}
\newcommand{\rn}{\mathbb{R}^n}

\newcommand{\N}{\mathbb{N}}
\newcommand{\dd}{\mathbb{D}}

\newcommand{\F}{\mathcal{F}}

\newcommand{\cH}{\mathcal{H}}

\newcommand{\M}{\mathcal{M}}

\newcommand{\B}{\mathcal{B}}

\newcommand{\sbf}{{\bf S}}

\newcommand{\G}{\mathcal{G}}

\newcommand{\Span}{\operatorname{span}}

\newcommand{\mut}{\mathfrak{m}}

\newcommand{\RNum}[1]{\uppercase\expandafter{\romannumeral #1\relax}}

\newcommand{\meas}{\cH_{\text{par}}^{n+1}}
\newcommand{\pcubes}{\Delta}
\newcommand{\fapprox}{\gamma}
\newcommand{\sapprox}{\beta}

\newcommand{\size}[1]{\ell({#1})}

\newcommand{\cubes}{\dd}

\newcommand{\PBMO}{P\text{-}\operatorname{BMO}}

\renewcommand{\emptyset}{\mbox{\textup{\O}}}

\DeclareMathOperator{\supp}{supp}

\DeclareMathOperator{\diam}{diam}

\title[BPRPBI and Parabolic UR]{Big Pieces of Regular Parabolic (bi-)Lipschitz Images is Equivalent to Parabolic Uniform Rectifiability} 
\author[Simon Bortz, Matthew Hyde, Mason Sharp]
{Simon Bortz, Matthew Hyde, Mason Sharp}

\address{Simon Bortz , 
Department of Mathematics and CONSERVE-AWI Group
\\
University of Alabama
\\
Tuscaloosa, AL, 35487, USA}
\email{sbortz@ua.edu}

\address{Matthew Hyde, Department of Mathematics and Statistics
\\
University of Jyv\"askyl\"a
\\
P.O. Box 35 (MaD)
\\
FI-40014 University of Jyv\"askyl\"a, Finland}
\email{matthew.j.hyde@jyu.fi}

\address{Mason Sharp, Department of Mathematics
\\
University of Maryland
\\
4176 Campus Drive, William E. Kirwan Hall
\\
College Park, MD 20742-4015, USA}
\email{mesharp3@umd.edu}

\keywords{}
\subjclass[2010]{}
\thanks{S.Bortz is supported through the National Science Foundation DMS-2555449 and the Simons Foundation MPS-TSM-00959861 and an AWI-CONSERVE Fellowship at University of Alabama. M. Hyde is supported by the Research Council of Finland via the project \textit{Quantitative differentiability and rectifiability in metric spaces}, grant no. 363800.}

\begin{document}
\allowdisplaybreaks

\begin{abstract}
	We define the notion of regular parabolic (bi-)Lipschitz images as the parabolic (bi-)Lipschitz maps from $n$-dimensional space time which, up to translation, fix the $t$ variable and whose spatial components are each regular parabolic Lipschitz functions. We show that any parabolic Ahlfors-David regular is parabolic uniformly rectifiable if and only if it has big pieces of parabolic Lipschitz images of $n$-dimensional space time if and only if it has big pieces of parabolic bi-Lipschitz images of $n$-dimensional space time. This further extends the David-Semmes theory to the parabolic setting.
	
	Our proof combines the ideas of \cite{BH-BP, BHHLN-BP} and some ideas of Azzam and Schul \cite{AS-HardSard}. The proof easily adapts (and is far less complicated) to the Euclidean case to give an alternative proof of the analogous fact.
\end{abstract}

\subjclass{28A75}
\maketitle

\tableofcontents

\section{Introduction}

 Throughout we work in $(n+1)$ dimensional space-time $\mathbb{R}^{n+1} = \{(X,t): X \in \mathbb{R}^n, t \in \mathbb{R}\}$ with $n \ge 2$. Our main result is the following. 
 
\begin{theorem}\label{main.thrm}
Suppose that $\Sigma \subseteq \RR^{n+1}$ is a parabolic Ahlfors-David regular set (see Definition \ref{ADR.def}). The following are equivalent: 
\begin{enumerate}
	\item $\Sigma$ is parabolic uniformly rectifiable (P-UR, see Definition \ref{PUR.def}),
	\item $\Sigma$ has big pieces of regular parabolic Lipschitz images of $n$-dimensional space time (BPRPLI, see Definition \ref{BPRPBI.def}),
	\item $\Sigma$ has big pieces of regular parabolic bi-Lipschitz images of $n$-dimensional space time (BPRPBI, see Definition \ref{BPRPBI.def}). 
\end{enumerate}
\end{theorem}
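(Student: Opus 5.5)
The proof will run through the cycle (3) $\Rightarrow$ (2) $\Rightarrow$ (1) $\Rightarrow$ (3).

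\emph{(3) $\Rightarrow$ (2).} This is immediate. A regular parabolic bi-Lipschitz image is, in particular, a regular parabolic Lipschitz image with the same constants, so the big pieces persist.

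\emph{(2) $\Rightarrow$ (1).} We use the known characterization of P-UR through the parabolic $\beta$-numbers, namely the Carleson packing condition on $\beta_2$ or $\beta_1$ relative to $t$-independent planes. It is known that a regular parabolic Lipschitz graph is P-UR. The defining feature of a \emph{regular} map is that each spatial component has half-order time derivative $D^{1/2}_t$ in parabolic BMO. We will show that the image of $\RR^{n}$ under a map $\phi(x,t)=(f(x,t),t)$ of this kind satisfies the Carleson $\beta$-estimate. The approach is to apply, component by component, the square-function/Carleson estimate for regular parabolic Lipschitz functions (the parabolic Dorronsoro-type estimate) and then sum over the components. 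Finally, P-UR is stable under big pieces: a $\beta$-number Carleson estimate on big pieces of every surface ball yields the estimate on $\Sigma$ by a standard John--Nirenberg/stopping-time (big pieces of big pieces) argument. I expect this implication to be routine modulo citations.

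\emph{(1) $\Rightarrow$ (3).} This is the substantive direction, and I will follow the strategy described in the abstract. Fix a cube $Q$ in the parabolic dyadic lattice $\cubes(\Sigma)$.
\begin{enumerate}
\item Using P-UR, run the corona decomposition of \cite{BHHLN-BP}. This approximates $\Sigma$ near $Q$, on a Carleson-packing family of stopping regions, by regular parabolic Lipschitz graphs with small constant.
\item Following \cite{BH-BP}, build a single map $F$ from $n$-dimensional space-time into $\Sigma$. We glue the graph parametrizations over the stopping regions in the style of a David--Semmes/Azzam--Schul construction, composing tilts with partitions of unity adapted to the Whitney-type regions. Because the planes are $t$-independent, $F$ can be taken to fix $t$ up to translation. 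Regularity of $F$ (the $D^{1/2}_t$ bound in parabolic BMO) will follow from the Carleson packing of the corona together with the half-derivative bounds of each graph.
\item To extract bi-Lipschitz big pieces, adapt the ideas of Azzam and Schul \cite{AS-HardSard}. A Lipschitz image of a cube covering a big portion of $Q$ has, by a quantitative Sard-type decomposition, a large-measure set where $F$ is bi-Lipschitz. The remaining low-Jacobian part has small image measure.
\item Obtain a bi-Lipschitz map on all of $\RR^n$ in the regular class, for instance via a Kirszbraun-type extension in the spatial variables. This step is delicate because of the time-regularity requirement. Alternatively, we avoid extension by choosing the corona so that $F$ is already bi-Lipschitz away from small-measure bad sets.
\end{enumerate}

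The main obstacle is step (iii)/(iv). The Azzam--Schul decomposition must be done in a way that keeps the half-time-derivative BMO control. I would first try to run the Sard argument only in the spatial fibers, for each fixed $t$, since $F$ preserves $t$. I would then verify that the bi-Lipschitz restriction can be realized as a regular image by reusing the corona-adapted construction with modified stopping rules, rather than by a generic extension.
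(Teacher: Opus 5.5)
Your step (3)$\Rightarrow$(2) is fine, but both nontrivial directions have genuine gaps. In fact, the Jones/Sard-type bi-Lipschitz extraction you invoke in (1)$\Rightarrow$(3) is exactly what is missing from (2)$\Rightarrow$(1).

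In (2)$\Rightarrow$(1), the componentwise $\gamma$-estimate controls $\beta$-numbers only for \emph{bi}-Lipschitz images. For a merely Lipschitz $\phi$, the set $\phi(B_n(\mathbf{0},r))$ need not be ADR. So neither the Carleson $\beta$-condition nor stability of the geometric lemma under big pieces (\cite[Proposition 2.29]{BHHLN-BP}, which needs uniformly ADR pieces satisfying the geometric lemma) applies to it. Moreover, $\phi^{-1}(B(\Xbf,s))$ may consist of many far-apart regions of the domain. Each can have small $\gamma$-numbers but a different affine approximant, so the image inside $B(\Xbf,s)$ is a union of transversal sheets. You therefore cannot charge $\beta(\Xbf_Q,3\ell(Q))$ to boundedly many domain cubes of size $\sim\ell(Q)$. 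That charging is precisely what the change of variables in Lemma~\ref{l:geometric-lemma-bi-lip-images} uses, through $\Psi^{-1}(3Q)\subset 3I_Q$ and the multiplicity bound \eqref{e:bounded-cubes}. The paper first proves a Jones-type decomposition for regular parabolic Lipschitz maps (Theorem~\ref{t:lip-bi-lip}). It then picks a piece $A=F_j$ whose image still carries a fixed fraction of $\Sigma\cap B(\Xbf,r)$. Next it extends $f|_A$ to a \emph{regular} bi-Lipschitz map into a larger space-time $\RR^{m+1}$, by appending a regularized distance to $A$ and bump coordinates on separated Whitney families; regularity of these sums is the gluing Lemma~\ref{l:gluing-for-regular-functions-prime}. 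Only after this do the componentwise estimate, stability under big pieces in $\RR^{m+1}$, and the projection Lemma~\ref{l:dimension-reduction} finish the argument.

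For (1)$\Rightarrow$(3), your outline has no mechanism that closes. First, a single map gluing all the corona graphs cannot have uniform constants. A point may lie in unboundedly many nested stopping regimes, since the packing condition controls this only on average. Each passage between regimes requires tilting between $t$-independent planes that need not be close to parallel, so Lipschitz and Carleson constants accumulate. Second, a Sard/Jones extraction gives bi-Lipschitz control only on a subset, while (3) demands a regular bi-Lipschitz map of all of $\RR^n$ into $\RR^{n+1}$. In codimension one there is no general bi-Lipschitz extension: Kirszbraun gives only Lipschitz extensions, and the bump-coordinate trick above needs extra dimensions. Your fallback of choosing the corona so that $F$ is already bi-Lipschitz is not specified.

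The paper instead inducts on the packing level. The statement $H(a)$ provides, for each $Q_0$ with $\mut(\dd(Q_0))\le a\sigma(Q_0)$, a regular bi-Lipschitz map on a $t$-independent plane that:
\begin{itemize}
\item is the identity outside a ball of radius $\kappa_a^*\diam(Q_0)$,
\item maps that ball into itself,
\item captures $\eta_a\sigma(Q_0)$ of $Q_0$.
\end{itemize}
To pass from $a$ to $a+b$, the paper proceeds as follows.
\begin{enumerate}
\item Lemma~\ref{extraplem.lem} gives a sawtooth lying in a single regime, hence one graph $\theta$ over a plane $P_0$.
\item It applies $H(a)$ to children of well-separated good stopping cubes.
\item It rotates each eventually-flat piece by the smooth regular Givens spiral (Lemmas~\ref{lem:givenrot} and~\ref{lem:givrotimages}), so that it becomes a vertical translate of $P_0$ outside a ball.
\item It flattens $\theta$ to the matching constants on separated cubes (Lemma~\ref{lem:constreplem}).
\item It splices the pieces (Lemma~\ref{lem:flatpatching}), getting the lower bi-Lipschitz bound from $\pi_{P_0}(\widetilde F_i(E))\subseteq E$.
\end{enumerate}
Because $b$ is fixed, only $C_0/b$ such steps occur, and this is what keeps the constants bounded.
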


The study of uniform rectifiability (a quantitative notion of rectifiability) was initiated by David and Semmes in two remarkable monographs \cite{DS-Ast,DS-AMS}, drawing inspiration from the work of Jones \cite{Jones-salesman} and the boundedness of the Cauchy integral on Lipschitz graphs \cite{CMM}. In the works \cite{DS-Ast,DS-AMS}, David and Semmes connected various quantitative geometric notions with properties of integral operators. A central condition, on which the parabolic theory of uniform rectifiability rest, relies on the use of the Jones $\beta$-numbers (Definition \ref{betas.def}), which measure the flatness of a set in an $L^2$ averaged sense (more will be said about this later). 

Recall, a set is (qualitatively) $d$-rectifiable in $\mathbb{R}^n$ if it can be covered by a countable number of Lipschitz images of $\RR^d$ up to set of measure zero with respect to the $d$-dimensional Hausdorff measure. David and Semmes defined a set in $\RR^{n}$ to be \textit{uniformly rectifiable} (UR) if it has \textit{big pieces of Lipschitz images} (BPLI). Roughly, this means there are positive $\eta$ and $L$ such that, at every location and scale $(x,r)$ on the set, the set coincides with an $L$-Lipschitz image of $B_d(0,r)$ (the ball of radius $r$ in $\RR^n$) on an $\eta$-percentage with respect to $d$-dimensional Hausdorff measure. By embedding $\RR^n$ into a larger\footnote{This was not required if $d$ is sufficiently small compared to $n$, but needed, for instance, when $d = n-1$.} Euclidean space, David and Semmes showed that UR is equivalent to big pieces of bi-Lipschitz images (BPBI), i.e., one replaces Lipschitz images of $B_d(0,r)$ with bi-Lipschitz images of $\RR^d$ in the larger space. Later, Azzam and Schul \cite{AS-HardSard} showed, among other things, that one did not need to embed $\mathbb{R}^n$ into a larger dimensional Euclidean space to obtain BPBI.

In the above definitions, it is known (by an example of Hyrcak (unpublished)) that one cannot replace (bi-)Lipschitz images with Lipschitz graphs (in contrast to the qualitative setting). This example, based on the Venetian blinds construction, can be found in \cite{azzam2021semi}. However, Azzam and Schul \cite{AS-HardSard} showed that UR sets do satisfy a weaker condition called \textit{big pieces squared of Lipschitz graphs} (BP$^2$LG). An alternative proof that UR sets are BP$^2$LG in co-dimension 1 was provided afterward by the first author and Hofmann \cite{BH-BP}, and subsequently extended to metric spaces by the first author, Hoffman, Hofmann, Luna-Garcia and Nystr\"om \cite{BHHLN-BP}.

The current article concerns the notion of parabolic uniformly rectifiable (P-UR) sets. This theory is motivated by the study parabolic PDEs in domains with non-smooth, dynamic (lateral) boundaries and important singular integrals on their boundaries (e.g. layer potentials). Following the work of Dahlberg \cite{Dahl-L2}, which showed that the harmonic measure was quantitatively absolutely continuous with respect to surface measure in Lipschitz graph domains, it was conjectured that the same should be true in the parabolic setting. In particular, if $\Omega$ is the region above the graph of $\varphi$ satisfying a parabolic Lipschitz condition, then the caloric measure\footnote{The family of measures $\omega^{(X,t)}$, that give the value of the solution to the Dirichlet problem with data $f$ for the heat equation at the point $(X,t)$ via integration against $f$.} should be absolutely continuous with respect to the (parabolic) surface measure on the graph of $\varphi$. This was subsequently shown to be untrue by Kaufman and Wu \cite{KW-counter}. The natural follow-up question was ``What additional assumption on the function $\varphi$ yields the quantitative absolute continuity of the caloric measure with respect to the surface measure?" Lewis and Murray \cite{Lew-Mur-Mem} introduced the concept of {\it regular} parabolic Lipschitz (Lip(1,1/2)) graphs, where one additionally assumes that the function $\varphi$ possesses additional regularly in time, in the form of a half order time derivative in the parabolic BMO space. They showed in \cite{Lew-Mur-Mem} that the parabolic version of Dahlberg's theorem holds under this additional assumption. It took roughly 30 years to establish that this condition is, in fact, necessary for the parabolic version of Dahlberg's theorem; this was shown by the first author, Hofmann, Martell and Nystr\"om \cite{BHMN1}. 

Shortly after the work of Lewis and Murray, Hofmann showed \cite{Hof-SIO} that (homogeneous) parabolic singular integrals are bounded on the graphs of regular parabolic Lipschitz graphs (see \cite{BHHLN-SIO} for the case of non-homogeneous kernels) and Hofmann and Lewis \cite{HL-ann} proved the $L^2$ solvability of the Dirichlet and Neumann boundary value problems under the (necessary) assumption that the half order time derivative of the graph function has sufficiently small BMO norm. With these developments in mind, Hofmann, Lewis and Nystr\"om \cite{HLN1,HLN2} introduced the notion of parabolic uniformly rectifiable sets when studying the parabolic analogues of David and Jerison \cite{DJ} and Kenig and Toro \cite{KT1,KT2,KT3}. Later, under the background assumption of parabolic uniform rectifiability, Engelstein \cite{Eng-parafbp} resolved the remaining parabolic Kenig-Toro theory.

Though Hofmann, Lewis and Nystr\"om introduced the notion of parabolic uniformly rectifiable sets (in terms of a square function estimate for the parabolic beta numbers), the systematic parabolic David-Semmes theory was not undertaken until the 2020s, by the first author, Hoffman, Hofmann, Luna-Garcia and Nystr\"om \cite{BHHLN-corona, BHHLN-BP}. In these works, it was shown that some of the characterizations of uniformly rectifiable sets hold in the parabolic setting. For instance, that there exists a corona decomposition by regular Lip(1,1/2) graphs and that parabolic UR sets are big pieces squared of regular Lip(1,1/2) graphs. It was observed by Hoffman and Jaye \cite{HJ-SIO} that there can be no singular integral characterization of uniformly rectifiable sets, owing to the fact that parabolic singular integrals are merely odd in the spatial variables; on the other hand, Hoffman and Jaye show therein that if the set avoids looking like certain pathological sets (measured in transport distance) at most scales and locations then the singular integral characterization does hold. It is also important to note that several of the important characterizations of UR sets in the David-Semmes {\it fail} to hold in the parabolic setting. For instance the BWGL (`bilateral weak geometric lemma') and similar notions fail to imply parabolic uniform rectifiability; in particular, there are graphs that are very flat and have vanishing flatness at small scales that are not parabolic uniformly rectifiable. 

The purpose of this article is to prove the correct analogue of the David-Semmes characterization of uniformly rectifiable sets in terms of big pieces of Lipschitz and bi-Lipschitz images in the parabolic setting. We define the notion of regular parabolic (bi-)Lipschitz maps as the parabolic (bi-)Lipschitz maps whose spatial coordinates are regular Lip(1,1/2) functions and which act on the time coordinate by translation, and we show that P-UR sets are precisely the sets which have big pieces of regular parabolic (bi-)Lipschitz images. This fact follow from the set of implications  
\begin{align}
	\mbox{BPRPLI $\implies$ P-UR $\implies$ BPRPBI $\implies$ BPRPLI.} 
\end{align}

In this article, we prove the first and second implication (the final implication is trivial), see Theorem \ref{t:BPRPLI-implies-UR} and Theorem \ref{t:UR-implies-BPLI}, respectively. The first implication, that big pieces of regular parabolic Lipschitz images implies parabolic uniform rectifiability, proceeds as follows. First, we prove a bi-Lipschitz decomposition theorem for regular parabolic Lipschitz maps (\textit{\`a la} Jones \cite{jones1988lipschitz}) and use this to find a subset $A \subseteq B_d(0,r)$ on which the Lipschitz map $f$ from the definition of BPRPLI is bi-Lipschitz and such that $f(A)$ has large intersection with the set. By adding small expansion factors away from $A$, we produce a new map, into some higher dimensional space time $\RR^{m+1}$, which which agrees with the original map on $A$ and is bi-Lipschitz onto its image. This is similar to how David and Semmes construct bi-Lipschitz extensions in \cite{DS-Ast}. The point is, we need to do this in such a way that each spatial component is regular parabolic Lipschitz. A general transference principle  \cite{BHHLN-BP, DS-AMS, Rigot} then allows us to reduce to showing that regular parabolic bi-Lipschitz images of $n$-dimensional space time are parabolic uniformly rectifiable. This latter fact follows, essentially, from the definition of regular parabolic Lipschitz functions and the Pythagorean theorem.

The other direction, that parabolic uniform rectifiability implies big pieces of regular parabolic bi-Lipschitz images, is significantly more complicated. The general idea is an inductive scheme on the Carleson packing condition in the Corona decomposition for parabolic uniformly rectifiable sets for a fixed cube $Q_0$. To move from one packing constant to the next, we have to take several regular parabolic bi-Lipschitz images and marry them together. Two advantages we have are that we have a graphical approximation (but not coincidence) of the set under study and that the images we are gluing together look like planes at large enough scales. On the other hand, the planes for these images are not (even close to) parallel, so our task becomes rotating them in a smooth manner and gluing them together. Less smooth (Lipschitz) rotations like this are present in the work of Azzam and Schul \cite{AS-HardSard}, whose proof uses different techniques.

\begin{remark}[Simplifications in the elliptic case]
	A particular challenge in our setting is that we need to preserve regularity that is more than Lipschitz, adding complication to many of the steps involved. In the elliptic case, the several proofs simplify dramatically. Notably, we can dispense with Section \ref{s:gluing-lemma} entirely (and the part of any proof which checks the hypotheses there) and we need not worry about the producing such regular rotations (as in Lemma \ref{lem:givenrot}), but those that are merely Lipschitz. 
\end{remark}

\section{Preliminaries} 

Points in  Euclidean space-time
$ \mathbb R^{n+1}$ are denoted by $(X,t)$. However, we will often write $\Xbf$ to denote a point in space time, that is,
\[ \Xbf = (X,t).\]
When we are in $n$-dimensional space time $\mathbb{R}^{n-1} \times \mathbb{R}$, we will sometimes use the notation
\[(x,t)  \in \mathbb{R}^{n-1} \times \mathbb{R}.\] We set
\[d:= n+1.\]
Below, $\nabla$ will always denote the spatial gradient.

We let   $\langle \cdot ,  \cdot  \rangle $  denote  the standard inner
product on $ \mathbb R^{n} $ and we let  $  | X | = \langle X, X \rangle^{1/2} $ be
the  Euclidean norm of $ X\in\re^n$.  Given $(X,t), (Y,s)\in\mathbb R^{n+1}$,
we define the parabolic distance between $(X,t)$ and $(Y,s)$ as
\begin{equation}\label{e:parabolic-dist}
	\dist((X,t),(Y,s)) \coloneqq \dist(X,t,Y,s):= 
|X-Y|+|t-s|^{1/2}.
\end{equation}
We also use its $L^\infty$ parabolic distance 
\[d_\infty(X,t,Y,s) := \max\{\max_i\{|X_i - Y_i|\}, |t-s|^{1/2}\},\]
where $(X,t) = (x_1,\dots, x_n, t)$ and $(Y,s) = (y_1,\dots, y_n, s)$.

For $(X,t) \in \mathbb{R}^{n+1}$ and $R > 0$, we write
\[B((X,t), R) = \{(Y,s): \dist((X,t), (Y,s)) < R\}\]
to be the parabolic ball of radius $R$ centered at $(X,t)$. 

Given $\Xbf$ and $r>0$, we define the closed cube
\[Q(\Xbf, r) = \{\Ybf: \dist_\infty(\Ybf,\Xbf) \le r\}\]
and we let 
\[\ell(Q) = 2r\]
denote the `parabolic side-length' of $Q$.

We also sometimes employ the following function
\[\rho(X,t):= (|X|^4 + |t|^2)^{1/4}.\]
One can deduce 
\begin{equation}\label{rhodistcomp.eq}
\rho(X,t) \le \dist((X,t), (0,0)) \le  2^{3/4}\rho(X,t).
\end{equation}

We work with a suitable Hausdorff measure adapted to the parabolic scaling, which we introduce now. For $\delta>0$, $\alpha \ge 0$ and $E\subset \re^{n+1}$, we set
  \[ \cH_{\text{par},\delta}^\alpha(E):= \inf \sum_k (\diam(E_k))^\alpha,
  \]
  where the infimum runs over all countable such coverings of $\{E_k\}_k$ of $E$ with $\diam(E_k)\leq \delta$ for all $k$ (here the diameter is defined with respect to the parabolic distance). We then define
  \[
  \cH_{\text{par}}^\alpha (E) := \lim_{\delta\to 0^+} \cH_{\text{par},\delta}^\alpha(E)\,.
  \]
As is the case of classical Hausdorff measure, $ \cH_{\text{par}}^\alpha$ is a Borel regular measure.

Given a closed set $\Sigma \subset \mathbb R^{n+1}$, we
define the $(n+1)$ dimensional parabolic surface measure on
$\Sigma$ as the restriction of $\cH_{\text{par}}^{n+1}$ to $\Sigma$, i.e.,
\begin{equation}\label{sigdef}
\sigma = \sigma_\Sigma:=  \cH_{\text{par}}^{n+1}|_\Sigma\,.
\end{equation}

\begin{definition}[Parabolic Ahlfors-David regular]\label{ADR.def}
We say that a set $\Sigma\subset \rn\times \mathbb{R}$
is \textit{parabolic Ahlfors-David regular, parabolic ADR} or simply \textit{ADR} for short, with constant $C \ge 1$, if it is closed and
\[C^{-1} r^{n+1} \le \sigma(B(X,t,r)) \le  C r^{n+1},
\quad \forall (X,t) \in \Sigma, \,\, r> 0. \]
\end{definition}

A central part of the theory of uniformly rectifiable sets is the Jones $\beta$-numbers. In the parabolic setting, one needs to work with planes that are properly oriented, that is, $t$-independent planes.
\begin{definition}[$t$-independent planes]\label{tindplanes.def}
An $n$-dimensional hyperplane $P\subset \RR^{n+1}$ is called \emph{$t$-independent} if its unit normal is purely spatial, that is,
\[
    n_P=(\nu,0), \qquad \nu\in \mathbb S^{n-1}.
\]
Equivalently, there exists a constant $c$ so that 
\[
    P=P_\nu:=\{(X,t)\in\RR^n\times\RR:X\cdot \nu=c\}.
\]
In particular, a $t$-independent plane contains a line in the $t$-direction.
\end{definition}

\begin{definition}[Parabolic $\beta$-numbers]\label{betas.def}
Let $\Sigma \subset \mathbb{R}^{n+1}$ be an ADR set. For $(X,t) \in \Sigma$ and $r> 0$, we define the \textit{(parabolic) $\beta$-number} as
\[\beta_\Sigma((X,t),r): = \inf_{P \in \mathcal{P}} \left( \frac{1}{\sigma(B((X,t), r))}\int_{B((X,t), r)}  \left( \frac{\dist((Y,s), P)}{r} \right)^2 \, d\sigma(Y,s) \right)^{1/2},\]
where $\mathcal{P}$ is the collection of $t$-independent planes. In the case that $\Sigma$ is clear, we shall often simply write $\beta \coloneqq \beta_\Sigma$. 
\end{definition}

\begin{definition}[Parabolic Uniform Rectifiability, P-UR]\label{PUR.def}
Let $\Sigma \subset \mathbb{R}^{n+1}$ be an ADR set. We say that $\Sigma$ is \textit{parabolic uniformly rectifiable} (P-UR) if 
\[\beta((X,t), r)^2 \frac{d\sigma(X,t) dr}{r}\]
is a Carleson measure on $\Sigma \times (0, \infty)$. This means, there exists a constant $M$ such that, for $(Z,\tau) \in \Sigma$ and $R > 0$, 
\begin{equation}\label{betacarldef.eq}
\int_0^R \int_{B((Z,\tau), R)} \beta((X,t), r)^2 d\sigma(X,t) \frac{ dr}{r} \le MR^d.
\end{equation}
\end{definition}

The prototypical P-UR sets are regular parabolic Lipschitz graphs.

\begin{definition}[Parabolic Lipschitz functions and gamma numbers]\label{regfngraph.def}
A function $f:\mathbb{R}^{n} \to \mathbb{R}$ on $n$-dimensional space time is \textit{parabolic Lipschitz} (or Lip(1,1/2)) if there exists a constant $L$ such that
\[|f(x,t) - f(y,s)| \le L\dist((x,t),(y,s)).\]
For a Lip(1,1/2) function $f$, we define the $\gamma$-numbers to be 
\[\gamma_f(\Xbf, r) \coloneqq \inf_{A}\left(\frac{1}{|Q(\Xbf,r)|}  \iint_{Q(\Xbf,r)} \frac{|f(y,s) - A(y)|^2}{r^2} \, dy \,ds\right)^\frac{1}{2},\]
where $|Q(\Xbf,r)|$ denotes the Lebesgue measure of $Q(\Xbf,r)$ and the infimum is over all affine functions of the spatial variables, that is, $A(y) = a + \vec{b}\cdot y$. Here, $Q(\Xbf,r)$ is an $n$-dimensional space-time cube.
\end{definition}

If $f$ is Lip(1,1/2) and 
\begin{align}
	\Gamma = \textrm{graph}(f) \coloneqq \{(x_1, x,t): x_1 = f(x,t)\},
\end{align}
it is known that 
\[\beta_\Gamma(f(\Xbf),r) \lesssim  \gamma_f(\Xbf, r)  \lesssim \beta_\Gamma(f(\Xbf),(1+L)r),\, \] 
where the implicit constants depend on dimension and the Lip(1,1/2) constant of $f$. 
Thus, $\beta_\Gamma(f(\Xbf),r)^2 r^{-1} \, d\Xbf \, dr$ is a Carleson measure on $\Gamma \times (0,\infty)$ if and only if $\gamma_f(\Xbf,r)^2 r^{-1} \, d\Xbf \, dr$ is a Carleson measure on $\mathbb{R}^n \times (0,\infty)$. This leads to the following definition. 

\begin{definition}[Regular parabolic Lipschitz functions]
We say $f$ is \textit{{\bf regular} parabolic Lipschitz} (or regular Lip(1,1/2)) if the graph $\Gamma = \textrm{graph}(f)$ is P-UR. We say  $f:\mathbb{R}^n \to \mathbb{R}$ is an $(L,M)$-regular Lip(1/1/2) function and $\Gamma$ is $(L,M)$-regular parabolic Lipschitz, if $f$ is parabolic Lipschitz with constant $L$ and $\gamma_f(\Xbf,r)^2 r^{-1} \, d\Xbf \, dr$ is a Carleson measure with constant $M$, that is,
\begin{align}\label{e:gamma-carleson}
	\int_0^R \int_{Q(\Xbf,R)} \gamma_f(\Ybf,r)^2 r^{-1} \, d\Ybf \, dr \le M|Q(\Xbf,R)|, \quad \forall Q(\Xbf,R).
\end{align}
\end{definition}

\begin{remark}
	Regular parabolic Lipschitz is usually defined by asking that $D_t^{1/2}f$ belongs to the parabolic BMO space\footnote{This means replacing standard Euclidean cubes by parabolic cubes.}, where
	\[D_t^{1/2}f(x,t): = c\  p.v.\int_{\mathbb{R}} \frac{f(x,s) - f(x,t)}{|s-t|} \, ds\]
	and $c$ is an absolute constant. Here, $D_t^{1/2}f$ should be viewed as an element of $\mathcal{D}_0'$, where $\mathcal{D}_0$ is the collection of smooth compactly supported functions with zero mean. The Carleson measure property for Lip(1,1/2) functions is equivalent to this. Quantitatively, denoting $\mu_f \coloneqq \gamma_f(\Xbf,r)^2 r^{-1} d\Xbf dr$, we have the estimates 
	\begin{equation}\label{eq:dtbdbycarl}
		\| D_t^{1/2}  f \|_{\PBMO} \lesssim_n L + \| \mu_f \|_{\mathcal{C}}^{1/2}
	\end{equation}
	and 
	\begin{equation}\label{eq:carlbdbydt}
	\| \mu_f \|_{\mathcal{C}}^{1/2}  \lesssim_n L + 	\| D_t^{1/2}  f \|_{\PBMO} , 
	\end{equation}
	where $\|\cdot\|_{\PBMO}$ is the parabolic BMO norm and $\|\mu_f\|_{\mathcal{C}}$ is the Carleson norm of $\mu_f$, i.e., the smallest $M$ for which \eqref{e:gamma-carleson} holds. 
	
The bound \eqref{eq:dtbdbycarl} can be deduced from the methods in \cite{HLN1,HLN2}, see \cite[Lemma 5.5]{BHHLN-corona}. The bound \eqref{eq:carlbdbydt} can be found in \cite[Section 5]{Hof-SIO}; however, we should note that in that manuscript Hofmann works with objects called $D_n$ and $D_{par}$, which are not $D_t^{1/2}$. In particular, in \cite[Section 5]{Hof-SIO} it is shown that $\| \mu_f \|_{\mathcal{C}}^{1/2}$ is bounded by $\|D_{par}\|_{\PBMO}$ up to a dimensional constant.  On the other hand, in \cite[Section 3]{Hof-SIO} it is observed that
\[D_{par}  = \sum_{j =1}^{n-1} \partial_{x_j}R_j + D_nR_n,\]
where $R_j$ are certain {\it parabolic} Riesz transforms, that is, the Fourier symbol of $R_j$ is  $\xi_j/\|(\xi, \tau)\|$ for $j \in \{1,2,\dots, n-1\}$ and the Fourier symbol of  $R_n$ is $\tau/\|(\xi, \tau)\|^2$ (here, the norm $\|\cdot\|$ is different, but equivalent to the one we use, so that the identity above holds). Since $R_j$ maps $\PBMO$ to $\PBMO$, it holds that
\[\|D_{par} f\|_{\PBMO} \lesssim \sum_{j = 1}^{n-1}\|\partial_{x_i} f\|_{\PBMO} + \|D_n f\|_{\PBMO} \lesssim \|\nabla  f\|_{L^\infty} + \|D_n f\|_{\PBMO}.\]
The final step is then to use 
\cite[Equation (0.19)]{HL-ann}, which states
\[ \|\nabla  f\|_{L^\infty} + \|D_n f\|_{\PBMO} \approx \|\nabla  f\|_{L^\infty} + \|D_t^{1/2} f\|_{\PBMO} .\] 
\end{remark}

Now we define the notion of regular parabolic (bi)-Lipschitz \textit{maps}.

\begin{definition}[Parabolic regular (bi-)Lipschitz maps]\label{d:p-regular-bi-lip}
We say $F: \mathbb{R}^n \to \mathbb{R}^{n+1}$, defined as
\[F(x,t) = (F_1(x,t), \dots, F_n(x,t), F_{n+1}(x,t)),\]
is \textit{parabolic regular Lipschitz} if there exists a constant $L\ge 0$ such that $F$ is $L$-Lipschitz, that is,
\begin{align}
	\dist(F(x,t), F(y,s)) \leq L \dist((x,t),(y,s)) 
\end{align}
for all $(x,t), (y,s) \in \mathbb{R}^n$, if the action of $F$ on the $t$-coordinate is translation, that is,
\[F_{n+1}(x,t) = c + t,\]
and $F_1,\dots, F_n$ are regular Lip(1,1/2) functions. We say $F$ is an \textit{$(L,M)$-parabolic regular Lipschitz map} if it is $L$-Lipschitz and each $F_i$, $i = 1,\dots, n$, is $(L,M)$- regular parabolic Lipschitz. We define \textit{regular parabolic bi-Lipschitz} and \textit{$(L,M)$-regular parabolic bi-Lipschitz} similarly, except that we require $F$ to be $L$-bi-Lipschitz, that is, 
\[L^{-1}\dist((x,t),(y,s)) \le \dist(F(x,t), F(y,s)) \le L \dist((x,t),(y,s)). \]
\end{definition}

\begin{remark}[Graphs and Images of Planes]
We shall often write that a function $F:P \to P^\perp$ is a regular Lip(1,1/2) function or a map $F:P \to \mathbb{R}^{n+1}$ is a regular  bi-Lipschitz map, where $P$ is a $t$-independent plane. In either case, these are viewed in the most natural way, that is, by verifying that the requisite properties hold after performing a change of coordinates. It is easy to see that these transformations (in the domain or co-domain) can only increase the Lip(1,1/2) character and the Carleson norm of the $\gamma$ numbers of the coordinate functions by at most dimensional constants. Since these constants have no bearing on the conclusions of our results, we do not carefully track them.
\end{remark}

\begin{definition}[Big pieces of regular parabolic (bi-)Lipschitz images]\label{BPRPBI.def}
Let $\Sigma \subset \mathbb{R}^{n+1}$ be ADR. 
\begin{enumerate}
	\item We say $\Sigma$ has \textit{big pieces of regular parabolic Lipschitz images}, or $\Sigma \in BPRPLI$, if there exist constants $L, M \geq 0$ and $\theta > 0$ such that, for every $(X,t) \in \Sigma$ and $r> 0$, there exists an $(L,M)$-regular parabolic Lipschitz map $F_{X,t,r} \colon \RR^n \to \RR^{n+1}$ such that
	\[\sigma(F_{X,t,r}(B_n(\mathbf{0},r))) \cap B((X,t),r)) \ge \theta r^d,\]
	where $B_n(\mathbf{0},r)$ is the parabolic ball in $\RR^n$ centered at the origin with radius $r$. 
	
	\item We say $\Sigma$ has \textit{big pieces of regular parabolic bi-Lipschitz images}, or $\Sigma \in BPRPBI$, if there exist constants $L, M \geq 0$ and $\theta > 0$ such that, for every $(X,t) \in \Sigma$ and $r> 0$, there exists an $(L,M)$-regular parabolic Lipschitz map $F_{X,t,r} \colon \RR^n \to \RR^{n+1}$ such that
	\[\sigma(F_{X,t,r}(\RR^n) \cap B((X,t),r)) \ge \theta r^d.\]
\end{enumerate}
Equivalently, 

\begin{enumerate}
	\item [(1')] $\Sigma \in BPRPLI$ if there exist constants $L, M \geq 0$ and $\theta > 0$ such that, for every $Q \in \dd$, there exists an $(L,M)$-regular parabolic Lipschitz map $F_{X,t,r} \colon \RR^n \to \RR^{n+1}$ such that
	\[\sigma(F_{Q}(B_n(\mathbf{0},\ell(Q)))) \cap Q) \ge \theta \sigma(Q).\]

	\item [(2')] $\Sigma \in BPRPBI$ if there exist constants $L, M \geq 0$ and $\theta > 0$ such that, for every $Q \in \dd$, there exists an $(L,M)$-regular parabolic bi-Lipschitz map $F_{X,t,r} \colon \RR^n \to \RR^{n+1}$ such that
	\[\sigma(F_{Q}(\RR^n) \cap Q) \ge \theta \sigma(Q).\]
\end{enumerate}
\end{definition}

\begin{definition}[Parabolic isometry]
Write
\[
    \mathbb R^{n+1}=\mathbb R^n \times\mathbb R
\]
and equip \(\mathbb R^{n+1}\) with the parabolic metric from \eqref{e:parabolic-dist}. A \emph{parabolic isometry} is an affine map
\[
     A:\mathbb R^{n+1}\to\mathbb R^{n+1}
\]
of the form
\[
     A(X,t)
    =
    \bigl(OX+b,\ t+\tau\bigr),
\]
where \(O\in O(n)\) is an orthogonal transformation,
\(b\in\mathbb R^n\), and \(\tau\in\mathbb R\).

Thus, \( A\) acts on the spatial variables by a Euclidean
isometry and acts on the time variable by translation. In particular,
\[
    \dist\bigl( A(X,t), A(Y,s)\bigr)
    =
    \dist\bigl((X,t),(Y,s)\bigr)
\]
for every \((X,t),(Y,s)\in\mathbb R^{n+1}\).
\end{definition}

To discretize our analysis, we use a standard construction of ``dyadic cubes." Such constructions are now ubiquitous in harmonic analysis,
and for the following lemma we refer to \cite{Christ-cubes,David-cubes,HK-cubes}.

\begin{lemma}\label{cubes}  Assume that $\Sigma  \subset \mathbb R^{n+1}$ is (parabolic) ADR  in the sense of Definition \ref{ADR.def} with constant $M$. Then $\Sigma$ admits a parabolic dyadic decomposition in the sense that there exist positive, finite
constants $c_0$, $\zeta$, and $c_*\,$, depending only on dimension
and the ADR constant, such that the following holds. For each $k \in \mathbb{Z}$,
there exists a collection of Borel sets, $\mathbb{D}_k$,  which we will call (dyadic) cubes, such that
$$
\mathbb{D}_k:=\{Q_{j}^k\subset \Sigma: j\in \mathfrak{I}_k\},$$ where
$\mathfrak{I}_k$ denotes some (countable)  index set depending on $k$, and such that the following hold:
\begin{list}{$(\theenumi)$}{\usecounter{enumi}\leftmargin=1cm \labelwidth=1cm \itemsep=0.2cm \topsep=.2cm \renewcommand{\theenumi}{\roman{enumi}}}
\item $\Sigma=\cup_{j}Q_{j}^k\,$ for each $k\in{\mathbb Z}$.

\item If $m\geq k$ then either $Q_{i}^{m}\subseteq Q_{j}^{k}$ or $Q_{i}^{m}\cap Q_{j}^{k}=\emptyset$.

\item For each $(j,k)$ and each $m<k$, there is a unique $i$ such that $Q_{j}^k\subset Q_{i}^m$.
\item $\diam\big(Q_{j}^k\big)\leq c_* 2^{-k}$.
\item Each $Q_{j}^k$ contains $\Sigma\cap B(Z^k_{j},t^k_j, c_02^{-k})$ for some $(Z^k_{j},t^k_j)\in \Sigma$.
\item $\sigma\left(\left\{(Z,t)\in Q^k_j: \dist(Z,t,\Sigma\setminus Q^k_j)\leq \varrho \,2^{-k}\right\}\right)\leq c_*\,\varrho^\zeta\,\sigma(Q^k_j),$
for all $k,j$, and all $\varrho\in (0,1)$.
\end{list}
\end{lemma}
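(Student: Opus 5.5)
Lemma \ref{cubes} is the standard dyadic cube construction for spaces of homogeneous type (Christ, David, Hytönen--Kairema), so my plan is to verify that $(\Sigma, \dist, \sigma)$ is such a space and then run the usual construction, checking that nothing depends on the metric being Euclidean. The parabolic distance $\dist$ is a genuine metric on $\RR^{n+1}$: the triangle inequality holds separately for $|X-Y|$ and for $|t-s|^{1/2}$, the latter because $\sqrt{a+b}\le\sqrt a+\sqrt b$. Restricted to the closed set $\Sigma$, the measure $\sigma$ is doubling by the ADR condition of Definition \ref{ADR.def}, since $\sigma(B(\Xbf,2r))\le C(2r)^{n+1}\le 2^{n+1}C^2\sigma(B(\Xbf,r))$. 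Hence $\Sigma$ is geometrically doubling: a ball of radius $r$ contains at most $C'$ disjoint balls of radius $r/10$ centred in $\Sigma$, with $C'$ depending only on $n$ and the ADR constant.

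For the construction, I fix a small parameter $\delta = 2^{-1}$ (or $2^{-N}$ if extra room is needed, with reindexing) and choose, for each $k$, a maximal $2^{-k}$-separated net $\{\Zbf^k_j\}_{j\in\mathfrak I_k}\subset\Sigma$. Following Hytönen--Kairema, I arrange the nets to be nested, $\{\Zbf^k_j\}\subset\{\Zbf^{k+1}_j\}$, and define a parent map by sending each point of level $k+1$ to a nearest point of level $k$. I then set $\bar Q^k_j$ to be the closure of the union of descendants of $\Zbf^k_j$, and obtain the half-open cubes $Q^k_j$ by a tie-breaking order, so that they partition $\Sigma$ at every level. This gives (i), (ii) and (iii). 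Summing the geometric series of parent-child distances shows every descendant lies within $2\cdot 2^{-k}$ of $\Zbf^k_j$, which gives (iv). Separation of the net, together with the same geometric-series bound, shows that points within $c_0 2^{-k}$ of $\Zbf^k_j$ cannot belong to another cube of level $k$, which gives (v).

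The main obstacle is the small boundary property (vi). I would follow Christ's argument, or the probabilistic one of Hytönen--Kairema. Fix a point $\Xbf$ in the $\varrho 2^{-k}$-boundary layer of $Q^k_j$ with $\varrho\approx 2^{-m}$. At each of the $m$ intermediate generations, the chain of ancestors of $\Xbf$ must remain close to the boundary of a sibling. The doubling property shows that at each generation a fixed proportion $\eta\in(0,1)$ of the remaining mass is pushed well inside a child and so escapes the layer. Iterating gives $\sigma(\text{layer})\le C(1-\eta)^m\sigma(Q^k_j)$, which is the desired bound $c_*\varrho^\zeta\sigma(Q^k_j)$ with $\zeta=\log_2(1/(1-\eta))$.

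Finally, all constants in the argument come only from the doubling constant of $\sigma$ and the metric axioms, so $c_0$, $\zeta$ and $c_*$ depend only on $n$ and the ADR constant, as claimed. In practice I would simply cite \cite{Christ-cubes,HK-cubes} after checking the two hypotheses, metric and doubling.
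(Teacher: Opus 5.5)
Your proposal is correct and follows the paper's route: the paper gives no proof and simply invokes the constructions of \cite{Christ-cubes,David-cubes,HK-cubes}, and your check that $\dist$ is a metric and that ADR makes $\sigma$ doubling on $\Sigma$ is exactly what justifies that citation. One caution: your separation-plus-geometric-series argument for (v) only closes when the generation ratio $\delta$ is small. With $\delta=1/2$, descendants of a neighbouring net point can accumulate at $\Zbf^k_j$ itself, so you need the $\delta=2^{-N}$ option, followed by repeating generations to return to the scale $2^{-k}$.
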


\begin{remark}\label{remarkddcubes} We denote by
$\mathbb{D}=\mathbb{D}(\Sigma)$ the
collection of all $Q^k_j$, i.e.,
$$\mathbb{D} := \cup_{k} \mathbb{D}_k.$$ For a dyadic cube
$Q = Q^k_j \in \mathbb{D}_k$, we set 
\[\Xbf_Q \coloneqq (X_Q,t_Q) \coloneqq (Z_j^k,t_k^j)\quad \mbox{ and } \quad \ell(Q)\coloneqq 2^{-k},\]
which we will refer to as the \textit{center} and \textit{size}
of $Q$, respectively.  Evidently, $\ell(Q)\sim\diam(Q)$. Furthermore,
given $Q\in \mathbb{D}$ and $k\in\mathbb Z_+$, we let
$Q'\in \mathbb{D}_k(Q)$ if $Q'\subset Q$ and $\ell(Q') := 2^{-k}\ell(Q)$,
and we set $\dd(Q):=\{Q'\in\dd(\Sigma): Q'\subset Q\}$.
\end{remark}

Given $Q \in \mathbb{D}(\Sigma)$ and $\lambda > 0$, we set
\[\lambda Q = \{\Xbf: \dist(\Xbf, Q) \le (\lambda - 1)\diam{Q}\}.\]
On the other hand, if $Q \in \mathbb{D}(\mathbb{R}^n)$ or $Q \in \mathbb{D}(\mathbb{R}^{n+1})$ with $Q = Q(\Xbf,r)$ and $\lambda > 0$, we set
\[\lambda Q = Q(\Xbf,\lambda r)\]
to be the usual dilate of $Q$.

\begin{definition}[\cite{DS-AMS}]\label{d3.11}
Suppose $E$ is ADR with dyadic cubes $\dd(E)$. Let $\sbf\subset \dd(E)$. We say that $\sbf$ is
\textit{coherent} if the following conditions hold:
\begin{itemize}\itemsep=0.1cm

\item[$(a)$] $\sbf$ contains a unique maximal element $Q(\sbf)$ which contains all other elements of $\sbf$ as subsets.

\item[$(b)$] { If $Q$  belongs to $\sbf$, $\widetilde{Q} \in \mathbb{D}$ and  $Q\subset \widetilde{Q}\subset Q(\sbf)$, then $\widetilde{Q}\in {\bf S}$.}

\item[$(c)$] Given a cube $Q\in \sbf$, either all of its children belong to $\sbf$, or none of them do.

\end{itemize}
We say that $\sbf$ is \textit{semi-coherent} if only conditions $(a)$ and $(b)$ hold.
\end{definition}

It will be convenient for us to discretize the Carleson conditions for the $\gamma$ and $\beta$ numbers in terms of dyadic decompositions. The following is well-known \cite{DS-AMS}, see for example \cite{Rigot}.

\begin{lemma}
	Let $f \colon \RR^n \to \RR$ be parabolic Lipschitz and $\Sigma \subset \RR^{n+1}$ be ADR with constant $C_A$, and let $\dd = \dd(\RR^n)$ and $\dd' = \dd(\Sigma)$. Then, we have the following equivalences.
	\begin{enumerate}
		\item The inequality \eqref{e:gamma-carleson} holds with constant $M$ if and only 
		\begin{align}
				\sum_{Q \in \dd(Q_0)} \gamma_f(\Xbf_Q,3\ell(Q))^2\ell(Q)^{n+1} \leq M' \ell(Q_0)^{n+1} \quad \mbox{ for all } Q_0 \in \dd, 
		\end{align}
		where $M,M'$ satisfy $M/M' \sim_n 1$. In particular, $f$ is regular parabolic Lipschitz if and only if the above property holds. \\
		
		\item The inequality \eqref{betacarldef.eq} holds with constant $M$ if and only 
		\begin{align}
			\sum_{Q \in \dd'(Q_0)} \beta(\Xbf_Q,3\ell(Q))^2\ell(Q)^{n+1} \leq M' \ell(Q_0)^{n+1} \quad \mbox{ for all } Q_0 \in \dd', 
		\end{align}
		where $M,M'$ satisfy $M/M' \sim_{C_A,n} 1$. In particular, $\Sigma$ is P-UR if and only if the above property holds. 
	\end{enumerate}
	We refer to the above conditions as ``geometric lemmas'' for the remainder of the paper. 
\end{lemma}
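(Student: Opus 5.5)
Both parts are continuous-to-discrete comparisons with the same structure, so I would prove them side by side. In each direction the plan is to pass between the integral over $(\Ybf,r)$ and the dyadic sum. For a point $\Ybf$ and radius $r$, I pick a dyadic cube $Q$ with $\Ybf \in Q$ and $\ell(Q) \approx r$. Then I use a monotonicity property of the $\gamma$ (resp.\ $\beta$) numbers: if $B(\Ybf,r) \subseteq B(\Xbf,r')$ (or the analogous inclusion of space-time cubes) with $r' \le C r$, then $\gamma_f(\Ybf,r) \lesssim \gamma_f(\Xbf,r')$. This follows directly from the definitions. The infimum is over the same class of spatial affine functions (resp.\ $t$-independent planes), the normalizing measures are comparable (Lebesgue measure for (1), ADR for (2)), and the distance is rescaled by $r'/r \lesssim 1$.

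\textbf{Discrete to continuous.} Fix $Q(\Xbf,R)$ (resp.\ a ball $B(\Zbf,R)$ centred on $\Sigma$), and split $r \in (0,R)$ dyadically, $r \in [2^{-k-1},2^{-k})$. For $\Ybf$ in a cube $Q \in \dd_k$ we have $\dist(\Ybf,\Xbf_Q) \lesssim \ell(Q)$, so, after arranging the constant $3$ appropriately, $\gamma_f(\Ybf,r) \lesssim \gamma_f(\Xbf_Q, 3\ell(Q))$. In case (1) the cube is chosen with a center $\Xbf_Q$ for which this containment holds. If it fails for the literal constant $3$, I replace $Q$ by a bounded number of neighbours or ancestors $k_0$ generations up; this costs only a constant in the sum, since each cube is used boundedly many times. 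Integrating gives
\[\int_{2^{-k-1}}^{2^{-k}}\int_{Q\cap Q(\Xbf,R)} \gamma_f(\Ybf,r)^2\,d\Ybf\,\frac{dr}{r} \lesssim \gamma_f(\Xbf_{\widehat Q},3\ell(\widehat Q))^2\,\ell(Q)^{n+1}.\]
I then cover $Q(\Xbf,R)$ by boundedly many dyadic cubes $Q_0$ of size $\approx R$ and sum the discrete estimate over these.

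\textbf{Continuous to discrete.} For $Q \in \dd(Q_0)$, every $\Ybf$ in the ``center'' ball $B(\Xbf_Q,c_0\ell(Q))\cap\Sigma \subseteq Q$ (Lemma \ref{cubes}(v)) and every $r \in [C\ell(Q),2C\ell(Q)]$ satisfy $B(\Xbf_Q,3\ell(Q)) \subseteq B(\Ybf,r)$ with $r \approx \ell(Q)$. Hence $\beta(\Xbf_Q,3\ell(Q)) \lesssim \beta(\Ybf,r)$. Averaging over this set of $(\Ybf,r)$, which has $\sigma\otimes dr/r$ measure $\approx \ell(Q)^{n+1}$ by ADR, gives
\[\beta(\Xbf_Q,3\ell(Q))^2\ell(Q)^{n+1} \lesssim \int_{C\ell(Q)}^{2C\ell(Q)}\int_{Q} \beta(\Ybf,r)^2\,d\sigma\,\frac{dr}{r}.\]
The regions on the right have bounded overlap: the cubes of a fixed generation are disjoint, and the $r$-intervals overlap boundedly across generations. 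Summing over $Q \in \dd(Q_0)$ is therefore bounded by the Carleson integral over $B(\Xbf_{Q_0},C'\ell(Q_0))$ up to radius $C'\ell(Q_0)$. This is at most $M\,\ell(Q_0)^{n+1}$. Case (1) is identical, with Lebesgue measure in place of $\sigma$.

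\textbf{Main obstacle.} The only delicate point is the fixed enlargement constant $3$. The continuous integrals involve radii up to $R$, while the discrete sum only sees balls $B(\Xbf_Q,3\ell(Q))$, which may fail to contain $B(\Ybf,r)$ when $r \approx \ell(Q)$ and $\Ybf$ is near the boundary of $Q$, given the constant $c_*$. I would handle this by passing to ancestors: $\widehat Q$, $k_0$ generations up with $2^{k_0} \gtrsim c_*$, satisfies $B(\Ybf,r) \subseteq B(\Xbf_{\widehat Q},3\ell(\widehat Q))$. Each $\widehat Q$ arises from boundedly many descendants per generation, weighted by $\ell(Q)^{n+1} \approx \ell(\widehat Q)^{n+1}2^{-k_0(n+1)}$, so summing over descendants costs only a constant. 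Dependence on $n$ alone in (1), and on $(C_A,n)$ in (2), follows because all constants come from $c_0$, $c_*$ and ADR.
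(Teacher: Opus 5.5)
The paper does not prove this lemma; it only cites \cite{DS-AMS} and \cite{Rigot}. Your overall scheme is the standard one, and most of it is correct:
\begin{itemize}
\item comparing $\gamma$ or $\beta$ numbers when one ball or cube is contained in another of comparable radius;
\item averaging over $Q\times[C\ell(Q),2C\ell(Q)]$ with bounded overlap, in the continuous-to-discrete direction of both parts;
\item reducing to boundedly many cubes of size $\approx R$.
\end{itemize}
The discrete-to-continuous direction of part (1) is also fine, and there the constant $3$ causes no difficulty. For a standard parabolic dyadic cube, $\Ybf\in Q$ gives $d_\infty(\Ybf,\Xbf_Q)\le\ell(Q)/2$, hence $Q(\Ybf,r)\subseteq Q(\Xbf_Q,3\ell(Q))$ for every $r\le 2\ell(Q)$.

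The gap is in the discrete-to-continuous direction of (2), exactly at the step you flag as the main obstacle, and your remedy does not work. From $\Ybf\in\widehat Q$ you only get $\dist(\Ybf,\Xbf_{\widehat Q})\le\diam(\widehat Q)\le c_*\ell(\widehat Q)$. This bound scales with $\ell(\widehat Q)$, because the centre of an ancestor need not be near $\Xbf_Q$. So the ratio $\dist(\Ybf,\Xbf_{\widehat Q})/\ell(\widehat Q)$ does not improve as $k_0$ grows, and when $c_*>3$ the inclusion $B(\Ybf,r)\subseteq B(\Xbf_{\widehat Q},3\ell(\widehat Q))$ can fail at every generation.

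Neighbours do not rescue the argument either, since Lemma \ref{cubes} says nothing about how densely the centres of a generation are spread. Concretely, take $\Sigma=\{x_1=0\}$ with the standard parabolic dyadic cubes, but let $\dd_k$ consist of those of spatial side $2^{7-k}$, so that $\ell(Q)=2^{-k}$. Properties (i)--(vi) hold with a suitably large $c_*$. Yet every centre of every generation satisfies $\dist(\mathbf{0},\Xbf_Q)\ge 2^{6}\ell(Q)$, so the origin lies in no ball $B(\Xbf_Q,3\ell(Q))$. Hence no term of the discrete sum can dominate $\beta(\mathbf{0},r)$ through containment. This $\Sigma$ is flat, so the lemma's conclusion holds trivially for it; the example only shows that your containment mechanism cannot work under Lemma \ref{cubes} as stated.

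To close the argument you need one extra input, and either of the following suffices.
\begin{itemize}
\item Use a dyadic system in which every point of $\Sigma$ lies within $2\cdot 2^{-k}$ of some centre of $\dd_k$. This holds, after normalisation, for Christ-type cubes, whose centres form a maximal separated net at the scale of the cubes. Then take the \emph{nearest} centre rather than the centre of the cube containing $\Ybf$, so that $B(\Ybf,r)\subseteq B(\Xbf_{Q'},3\ell(Q'))$ for $r\le 2^{-k}$.
\item State the discrete condition with radius $K\ell(Q)$, $K\ge c_*+2$, which is how such conditions are usually formulated. Then $B(\Ybf,r)\subseteq B(\Xbf_Q,K\ell(Q))$ is immediate for $\Ybf\in Q$ and $r\le 2\ell(Q)$.
\end{itemize}
With either convention, the rest of your counting goes through as written.
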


It is known \cite{BHHLN-BP, BHHLN-corona} that a set is P-UR if and only if it admits a (bilateral) corona decomposition by regular parabolic Lipschitz graphs. We will only need one direction of this characterization.

\begin{theorem}[{\cite[Theorem 3.2]{BHHLN-corona}}]\label{PURiffcorona.thrm} 
Let $\Sigma \subset \mathbb{R}^{n+1}$ be an P-UR set with ADR constant $C_A$ and let $M_\Sigma$ be the smallest constant $M$ so that \eqref{betacarldef.eq} holds.
There exists  $\kappa =\kappa(n,C_A, M_\Sigma)$ and  $M = M(n,C_A, M_\Sigma)$ such that, for every $L \in (0,1]$, the following holds. There exists a disjoint decomposition,
$\dd(\Sigma) = \B \cup\G$, satisfying the following properties.
\begin{enumerate}
\item The collection $\B$ satisfies the Carleson
packing condition:
$$\sum_{Q'\subseteq Q, \,Q'\in\B} \sigma(Q')
\leq\, c(n,M,L)\, \sigma(Q)\,,
\quad \forall Q\in \dd(\Sigma)\,.$$
\item  The collection $\G$ is further subdivided into
disjoint stopping time regimes $\mathcal{S}$, such that each such regime $\sbf \in \mathcal{S}$ is coherent. 

\item The maximal cubes $Q(\sbf)$ for the stopping time regimes satisfy the Carleson
packing condition, that is, if 
\[\mathcal{M} = \{Q(\sbf)\}_{\sbf \in \mathcal{S}}\]
then
$$\sum_{Q(\sbf) \in \mathcal{M}: Q(\sbf) \subseteq Q}\sigma\big(Q(\sbf)\big)\,\leq\, c(n,M,L)\, \sigma(Q)\,,
\quad \forall Q\in \dd(\Sigma)\,.$$
\item For each $\sbf$, if $\sbf'$ is a semi-coherent stopping time regime such that $\sbf' \subseteq \sbf$,
 there exists a coordinate system $P^\perp \times P$ for $P$ a $t$-independent plane and an
$(L,M)$-regular Lip(1,1/2) function $ \psi_{\sbf'}:=\psi : \mathbb R^{n-1}\times\mathbb R\to \mathbb R$
 such that, if we define
 $\Gamma_{\sbf'}:=\Gamma_\psi:=\{(\psi(x,t),x,t):\ (x,t)\in \mathbb R^{n-1}\times\mathbb R\}$, then
\begin{equation}\label{closegraphcorona.eq}
\sup_{(X,t)\in 10 Q} \dist(X,t,\Gamma_{\sbf'} ) +   \sup_{(Y,s)\in B((X_Q,t_Q),10 \diam(Q)) \cap \Gamma_{\sbf'}} \dist(Y,s, \Sigma) 
\leq  L \diam(Q),
\end{equation}
for all $Q\in \sbf'$.
Moreover, we have that 
\[P \cap B((X_{Q(\sbf')},t_{Q(\sbf')}),\kappa \diam({Q(\sbf')})) \neq \emptyset\]
and
\[\supp(\psi) \subseteq P \cap B((X_{Q(\sbf')},t_{Q(\sbf')}),\kappa \diam({Q(\sbf')})).\]
\end{enumerate}
\end{theorem}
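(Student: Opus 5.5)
The plan is to follow the David--Semmes construction of a corona decomposition from the Carleson condition \eqref{betacarldef.eq}, adapted to $t$-independent planes. The new point compared with the Euclidean case is that the approximating graphs must be shown to be \emph{regular} Lip(1,1/2), and not merely Lip(1,1/2).

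\textbf{Step 1: good and bad cubes.} Fix a small $\epsilon = \epsilon(L) \ll L$. Using the discretized geometric lemma, call $Q$ bad if $\beta(\Xbf_Q, K\ell(Q)) > \epsilon$ for a large fixed $K$, and let $\B$ be the set of bad cubes. Chebyshev applied to the discrete Carleson sum gives
\[
\sum_{Q' \subseteq Q,\, Q'\in\B}\sigma(Q') \lesssim \epsilon^{-2} M_\Sigma\, \sigma(Q),
\]
which is (1). Whenever $Q\notin\B$, ADR plus a smallness of $\beta$ gives a $t$-independent plane $P_Q$ that approximates $\Sigma$ bilaterally on $10Q$ at scale $\epsilon^{c}\ell(Q)$. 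For the lower (density) half of the bilateral bound, I would argue by contradiction: a hole in $\Sigma$ near $P_Q$ would force large $\beta$ at a comparable scale. This is where $K$ large is used. I may also need to add to $\B$ the cubes where this fails, and control their packing by the same $\beta$ sum.

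\textbf{Step 2: stopping-time regimes.} Start from a maximal good cube $Q(\sbf)$ and descend. Stop at a cube $Q$ as soon as one of three things happens:
\begin{itemize}
\item $Q$ or one of its siblings is bad;
\item the angle between the normals of $P_Q$ and $P_{Q(\sbf)}$ exceeds $L/10$;
\item the accumulated $\beta^2$ along the chain $Q\subseteq \widetilde Q\subseteq Q(\sbf)$ exceeds a threshold $\delta$.
\end{itemize}
Stopping whole sibling families gives coherence, which is (2). The new regimes start at children of stopped cubes.

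The packing (3) is proved by the usual three-way case split.
\begin{itemize}
\item Regimes stopped through bad cubes are controlled by (1).
\item Regimes stopped by the accumulated-$\beta^2$ rule are controlled by the Carleson condition, through a Chebyshev argument on the sum along chains.
\item Regimes stopped by angle change are reduced to the previous case. Small $\beta$ at consecutive scales forces the normals to move by at most $\lesssim\beta$, so the total angle drift is bounded by $\sum\beta$ along the chain. This is bounded via the $\beta^2$ sum together with geometric decay, or handled by the accumulated-$\beta^2$ rule directly.
\end{itemize}

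\textbf{Step 3: the graph for a semi-coherent $\sbf'\subseteq\sbf$.} Set up coordinates with $P=P_{Q(\sbf')}$. Define the distance function $d(p)=\inf_{Q\in\sbf'}(\dist(p,Q)+\ell(Q))$ on $P$ and take a Whitney decomposition of $\{d>0\}$ by parabolic cubes. On each Whitney cube $R$, with associated cube $Q_R\in\sbf'$, let $A_R$ be the affine function whose graph is $P_{Q_R}$. Glue these with a smooth parabolic partition of unity to get $\psi=\sum_R\varphi_R A_R$. On $\{d=0\}$, define $\psi$ via the projection of $\Sigma$. Finally cut $\psi$ off outside $B(\Xbf_{Q(\sbf')},\kappa\diam Q(\sbf'))$.
\begin{itemize}
\item The Lip(1,1/2) bound with constant $\lesssim L$ follows from the angle control.
\item In the time direction, it uses that each $A_R$ is $t$-independent. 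Neighboring $A_R$, $A_{R'}$ differ by $\lesssim\epsilon\ell(R)$ on $R$, and $|\partial_t\varphi_R|\lesssim\ell(R)^{-2}$. Hence the $t$-difference quotient at scale $\ell(R)$ is $\lesssim \epsilon\,\ell(R)\cdot\ell(R)^{-2}\cdot\ell(R)^2/\ell(R) = \epsilon$, which is the required bound at parabolic scaling.
\item Then \eqref{closegraphcorona.eq} follows from the bilateral approximation in Step 1.
\end{itemize}

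\textbf{Step 4: regularity of $\psi$ (main obstacle).} We need the Carleson bound \eqref{e:gamma-carleson} with $M$ independent of $L$. I would bound $\gamma_\psi(\Xbf,r)$ in two regimes.
\begin{itemize}
\item For $r\gtrsim d(\Xbf)$, bound it by $\beta_\Sigma$ at a comparable cube of $\sbf'$, plus the tail $\sum$ of $\beta$'s of nearby cubes in $\sbf'$ whose Whitney regions meet $Q(\Xbf,r)$. This uses \eqref{closegraphcorona.eq} and the fact that $\gamma$ is an $L^2$ quantity.
\item For $r\ll d(\Xbf)$, use the smoothness of $\psi$ on the Whitney cube: $\gamma_\psi(\Xbf,r)\lesssim (r/\ell(R))\,\beta(Q_R)$, which sums geometrically in $r$.
\end{itemize}
The Carleson sum over $\Xbf$ and $r$ then reduces to
\[
\sum_{Q\in\sbf'} \beta(Q)^2\sigma(Q) + \sum_{\text{min. cubes}}\ell^{n+1},
\]
and both are controlled by $M_\Sigma$ and ADR.

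The delicate part is the second regime. One has to check that the $t$-independence of the planes and the parabolic partition of unity give genuine decay of $\gamma$ below the Whitney scale. A Lipschitz bound alone gives no Carleson control, so it is exactly this decay that produces the Carleson bound. I expect this, together with obtaining the full bilateral (density) approximation in Step 1, to be the main work.
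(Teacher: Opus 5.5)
The paper does not prove this statement. It imports it from \cite{BHHLN-corona} and only remarks that the sub-regime form of (4) and the support condition follow by re-running that proof, because the $\beta$-Carleson bounds restrict to any $\sbf'\subseteq\sbf$. Your David--Semmes-style reconstruction has the right overall shape, and your Step 3 handles sub-regimes directly, as that remark intends. Your Steps 3--4 are the right mechanism for regularity: the Whitney construction, control of $\gamma_\psi$ by the $\beta$'s of cubes of $\sbf'$ above the Whitney scale, and geometric decay below it. However, two steps fail as you justify them.

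\textbf{Bilateral approximation (Step 1).} Small $\beta$ does not exclude holes, at any scale. Let $\Sigma=P\setminus B$, with $P$ a $t$-independent plane and $B$ a parabolic ball of radius $\rho$. This set is parabolic ADR with $\beta\equiv 0$. Yet every cube $Q$ near $\partial B$ with $\rho\lesssim\ell(Q)\lesssim\epsilon^{-c}\rho$ has a point of $P_Q\cap 10Q$ at distance $\rho>\epsilon^{c}\ell(Q)$ from $\Sigma$. So neither your contradiction argument nor ``packing by the same $\beta$ sum'' (which is zero here) can work. The packing of such hole cubes is a separate, global fact, namely a parabolic bilateral weak geometric lemma. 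One way to obtain it is a two-pass argument. First build the unilateral corona with a parameter $\ll L$. In a regime $\sbf$ with graph $A_\sbf$ over $P_\sbf$, a hole cube $Q$ forces $h(y):=\dist((A_\sbf(y),y),\Sigma)\gtrsim L\ell(Q)$ on a ball of radius $\sim L\ell(Q)$ in $P_\sbf$ near $\pi(Q)$, while $h\lesssim\ell(Q)$ there. So each $y$ is charged by only $O(\log(1/L))$ scales and $O(1)$ cubes per scale. Hence hole cubes pack by $C(L)\sigma(Q(\sbf))$ in each regime, and globally by (3). Then define holes relative to the planes $P_Q$ (so the notion does not depend on the graph), add them to $\B$, and rerun the stopping time.

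\textbf{Angle stopping (Step 2).} Your reduction of angle-stopped regimes to the accumulated-$\beta^2$ rule is invalid. The rotation of the normals along a chain is controlled by $\sum_j\beta(Q_j)$, which is not bounded by $(\sum_j\beta(Q_j)^2)^{1/2}$, and there is no geometric decay. Consider a graph whose best-approximating slope moves by $c/(j+J)$ at the $j$-th generation below the top. Along the chain at a point it turns by $c\log((k+J)/J)$, which reaches $L/10$, while the accumulated $\beta^2$ is only $\approx c^2/J$, as small as you like. What is missing is the $L^2$ orthogonality used by David--Semmes. For $Q\in\sbf$, the slope of $P_Q$ is close to the average of $\nabla_x A_\sbf$ over $\pi(Q)$. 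Hence angle-stopped cubes lie where a maximal function of $\nabla_x A_\sbf-c$ is $\gtrsim L$. Chebyshev plus a Littlewood--Paley/Dorronsoro-type bound $\int|\nabla_x A_\sbf-c|^2\lesssim\iint\gamma_{A_\sbf}^2$ then controls them. Your own Step 4 bounds the right side by $\sum_{Q\in\sbf}\beta(Q)^2\sigma(Q)+O(\epsilon^2)\sigma(Q(\sbf))$. This yields the packing (3) once $\epsilon\ll L$.
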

\begin{remark} 
Item (4) is not explicitly stated in \cite[Theorem 3.2]{BHHLN-corona}. The proof in \cite[Theorem 3.2]{BHHLN-corona} (which is really re-running \cite[Theorem 3.1]{BHHLN-corona} with additional, bilateral approximation) relies on a Carleson estimate on the $\beta$ numbers that continues to hold for every sub-regime of $\sbf$. Moreover, the statement concerning
\[``P \cap B((X_{Q(\sbf')},t_{Q(\sbf')}),\kappa \diam({Q(\sbf')})) \neq \emptyset\]
and
\[\supp(\psi) \subseteq P \cap B((X_{Q(\sbf')},t_{Q(\sbf')}),\kappa \diam({Q(\sbf')}))''\]
does not appear in \cite[Theorem 3.2]{BHHLN-corona} explicitly, but can be easily deduced from the construction.  
\end{remark}

\section{A gluing lemma for regular Lipschitz functions}\label{s:gluing-lemma}

At several points in the proof of Theorem \ref{main.thrm}, we will need to glue families of functions which are locally regular parabolic Lipschitz and supported on a well-separated collection of cubes. In this section, we formulate a lemma via which this will be implemented. First, we need to say what we mean by \textit{locally} regular parabolic Lipschitz.

\begin{definition}\label{d:local-regular-lipschitz}
	Let $L,M \geq 0$ and $Q_0 \in \Delta$.  We say that a function $\theta \colon \RR^n \to \RR$ is \textit{$(L,M)$-regular parabolic Lipschitz in $10Q_0$} if it is parabolic $L$-Lipschitz and satisfies 
	\begin{align}\label{e:loca-regular-lipschitz}
		\sum_{Q \subseteq 4Q_1} \gamma_\theta(3Q)^2 \ell(Q)^{n+1} \leq  M \ell(Q_1)^{n+1}
	\end{align}
	for all $Q_1 \in \Delta$ such that $6Q_1 \subseteq 10Q_0$.
\end{definition}

The main result of this section is the following. 

\begin{lemma}\label{l:gluing-for-regular-functions-prime}
	Let $K,L,M \geq 0$. Let $\{Q_i\}_{i \in I} \subseteq \dd = \dd(\RR^n)$ be a collection of cubes satisfying the separation condition 
	\begin{align}\label{e:separation-gluing-prime}
		d_\infty(Q_i, Q_j) \ge 10^5\max\{\diam_\infty(Q_i), \diam_\infty(Q_j)), \quad \forall i \neq j,
	\end{align}
	where $\diam_\infty$ is the diameter measured in $d_\infty$. Let $\{h_i\}_{i \in I}$ be a collection of functions such that at least one of the following conditions holds: 
	\begin{enumerate}
		\item [(H1)] For each $i \in I$, the function $h_i$ is $(L,M)$-regular parabolic Lipschitz in $10Q_i$ and satisfies $\supp(h_i) \subseteq 3Q_i$. \\
		\item [(H2)] For each $i \in I$, we have 
		\begin{align*}
			h_i = \eta_i \theta_i, 
		\end{align*}
		where $\eta_i$ satisfies 
			\begin{align*}
			0 \leq \eta_i \leq 1, \  \supp(\eta_i) \subseteq 3Q_i \ \mbox{ and } \ 	|\nabla_x^\alpha \partial_t^m\eta_i|
			\le C_{\alpha,m} \ell(Q_i)^{-|\alpha|-2m}, 
		\end{align*} 
		and $\theta_i$ is $(L,M)$-regular parabolic Lipschitz such that 
		\begin{align*}
				|\theta_i| \leq K \ell(Q_i) \quad \mbox{ on } 10Q_i. 
		\end{align*}
	\end{enumerate} 
	Then, the function 	
	\begin{align*}
		h \coloneqq \sum_{i \in I} h_i 
	\end{align*} 
	is regular parabolic Lipschitz. Under the assumptions of (H1), this holds with constants $(C_nL,C_n(M +L^2))$; under the assumptions of (H2), this holds with constants $(C_n(L+K),C_n(M+L^2+K^2))$. 
\end{lemma}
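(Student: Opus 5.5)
First I reduce (H2) to (H1), and then I prove (H1) in two parts: a global Lipschitz bound, and the dyadic geometric lemma for $\gamma_h$ obtained by splitting cubes according to their size relative to the $Q_i$.

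\emph{Reduction of (H2) to (H1).} For $h_i=\eta_i\theta_i$ we check that $h_i$ is $(C(L+K),C(M+L^2+K^2))$-regular in $10Q_i$. The Lipschitz bound comes from $|h_i(\Xbf)-h_i(\Ybf)|\le |\eta_i||\theta_i(\Xbf)-\theta_i(\Ybf)|+|\theta_i(\Ybf)||\eta_i(\Xbf)-\eta_i(\Ybf)|$, using $|\theta_i|\le K\ell(Q_i)$ and $\|\eta_i\|_{\mathrm{Lip}}\lesssim \ell(Q_i)^{-1}$. For the $\gamma$ numbers, take a cube $Q$ and let $A$ be the optimal spatial affine function for $\theta_i$ on $3Q$.
\begin{itemize}
\item If $\ell(Q)\ll\ell(Q_i)$, write $\eta_i\theta_i-\eta_i(\Xbf_Q)A-\theta_i(\Xbf_Q)\nabla\eta_i(\Xbf_Q)\cdot(y-x_Q)$. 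Taylor-expanding $\eta_i$ to second order in parabolic scaling gives $\gamma_{h_i}(3Q)\lesssim \gamma_{\theta_i}(3Q)+(L+K)\ell(Q)/\ell(Q_i)$. The time increment of $\eta_i$ is $O(\ell(Q)^2/\ell(Q_i)^2)$, which is harmless. We also use $|\theta_i-\theta_i(\Xbf_Q)|\le L\ell(Q)$.
\item The error term $\ell(Q)/\ell(Q_i)$ is square-summable over a Carleson box: $\sum_{Q\subseteq Q_1}(\ell(Q)/\ell(Q_i))^2\ell(Q)^{n+1}\lesssim \ell(Q_1)^{n+1}$ whenever $\ell(Q_1)\lesssim\ell(Q_i)$.
\item If $\ell(Q)\gtrsim\ell(Q_i)$, the trivial bound $\gamma\lesssim (L+K)$ restricted to the boundedly many generations between $\ell(Q_i)$ and the box size is fine. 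Large cubes are treated in the (H1) argument anyway.
\end{itemize}
Since $\supp h_i\subseteq 3Q_i$, this puts us in (H1).

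\emph{Lipschitz bound for $h$ under (H1).} For $\Xbf\in 3Q_i$ and $\Ybf\in 3Q_j$ with $i\neq j$, use $h_i(\Xbf)=h_i(\Xbf)-h_i(\Zbf)$, where $\Zbf\notin 3Q_i$ is a point on the segment (or parabolic path) with $\dist(\Xbf,\Zbf)\le\dist(\Xbf,\Ybf)$. Doing the same for $h_j$ gives $|h(\Xbf)-h(\Ybf)|\le 2L\dist(\Xbf,\Ybf)$. In fact separation makes this easier: $|h_i|\le L\ell(Q_i)\lesssim 10^{-5}d_\infty(Q_i,Q_j)$.

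\emph{Geometric lemma for $h$.} Fix $Q_0\in\dd$. Split $\dd(Q_0)$ into three classes.
\begin{itemize}
\item[(a)] Cubes $Q$ with $3Q\cap\bigcup_i 3Q_i=\emptyset$. Here $h\equiv0$ on $3Q$, so $\gamma_h(3Q)=0$.
\item[(b)] Cubes $Q$ with $3Q$ meeting some $3Q_i$ and $\ell(Q)\le c\,\ell(Q_i)$ for a small dimensional $c$. By separation, $3Q$ meets only that single $3Q_i$, and $Q\subseteq 4Q_1$ for some $Q_1$ with $\ell(Q_1)\approx \ell(Q_i)$ and $6Q_1\subseteq 10Q_i$. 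Here $h=h_i$ on $3Q$, so the hypothesis \eqref{e:loca-regular-lipschitz} applies. Only boundedly many such $Q_1$ are needed to cover a neighborhood of $3Q_i$, giving a contribution $\lesssim M\sum_{i:3Q_i\cap 3Q_0\ne\emptyset,\ \ell(Q_i)\lesssim\ell(Q_0)}\ell(Q_i)^{n+1}$, with the case $\ell(Q_i)\gg\ell(Q_0)$ handled by applying the hypothesis directly with $Q_1\approx Q_0$.
\item[(c)] Cubes $Q$ with $\ell(Q)> c\,\ell(Q_i)$ for some $i$ with $3Q_i\cap 3Q\ne\emptyset$. Choose $A\equiv0$, giving $\gamma_h(3Q)^2\lesssim \ell(Q)^{-n-3}\sum_{i:3Q_i\cap3Q\neq\emptyset}\|h_i\|_{L^2}^2\lesssim L^2\sum_i \ell(Q_i)^{n+3}\ell(Q)^{-n-3}$. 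Summing over the $Q$ containing a given $Q_i$ in their triple, a geometric series in $\ell(Q_i)/\ell(Q)$ gives $\lesssim L^2\ell(Q_i)^{n+1}$.
\end{itemize}
Classes (b) and (c) both reduce to bounding $\sum_{i:3Q_i\subseteq CQ_0}\ell(Q_i)^{n+1}\lesssim \ell(Q_0)^{n+1}$. This packing holds because separation makes the dilates $10^4Q_i$ pairwise disjoint.

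\emph{Main obstacle.} The delicate step is class (b) near the boundary of the localization. We must ensure that the cubes $Q$ whose triples touch $3Q_i$ are all covered by finitely many $4Q_1$ with $6Q_1\subseteq10Q_i$, with constants that are uniform. We also need to handle the case where $Q_0$ itself is much smaller than $Q_i$, so that the Carleson box of $Q_0$ fits inside $10Q_i$. I would settle this by fixing $c$ small, say $c=1/100$, and covering $3Q_i$ explicitly by dyadic cubes of side $\ell(Q_i)/4$.
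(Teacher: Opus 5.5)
Your proposal is correct and is essentially the paper's argument. Your classes (a), (b), (c) are the paper's $R_j$, $S_1$, $S_2$, and they are treated the same way: on small cubes $h$ coincides with a single $h_i$ and its local regularity is used, while on large cubes one takes $A\equiv 0$, sums a geometric series in $\ell(Q_i)/\ell(Q)$, and uses the disjointness of the $3Q_i$.

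The only difference is organizational. You first reduce (H2) to (H1) by showing that $\eta_i\theta_i$ is locally regular in $10Q_i$, via a second-order parabolic Taylor expansion of $\eta_i$. The paper instead runs (H2) directly, with a product-of-affine-approximations estimate inside $S_1$, and then indicates the changes for (H1). One small point in your Lipschitz bound for $\eta_i\theta_i$: evaluate $\theta_i$ at whichever point lies in $10Q_i$, since $|\theta_i|\le K\ell(Q_i)$ is only assumed there.
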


\begin{proof}
	The proof is very similar whether we assume (H1) or (H2); however, (H2) is the slightly more complicated case which we will focus on. Once we have completed the proof under this assumption, we will describe the necessary changes for the (H1) case.  For brevity, let us denote 
	\begin{align}
		r_i \coloneqq \ell(Q_i) \quad \mbox{ for } i \in I. 
	\end{align}
	
	We start by showing that $h$ is parabolic $C(L+K)$-Lipschitz. Indeed, it follows by assumption that $h_i$ is a Lip(1,1/2) function with
	\begin{align}\label{e:h_i-Lip}
		\|h_i\|_{\rm{Lip}(1,1/2)} \lesssim L + K .
	\end{align}
	We take two points $(x,t)$ and $(y,s)$ and break into cases to prove that $h$ is Lip(1,1/2). If $(x,t),(y,s) \in (\cup_i 3Q_i)^c$, then $h(x,t) = h(y,s) = 0$ and there is nothing to show.  If $(x,t) \in Q_i$ for some $i$ and $(y,s) \in Q_i \cup ( \cup_j 3Q_j)^c$ then $h(x,t) = h_i(x,t)$ and $h(y,s) = h_i(y,s)$, and the Lipschitz bound for $h$ follows from that of $h_i$. Finally, if $(x,t) \in Q_i$ and $(y,s) \in Q_j$ with $i \neq j$ then, by definition of $h_i$, it holds that
	\begin{align}\label{e:size-h}
		|h(x,t)| \le Kr_i \quad \mbox{ and } \quad |h(y,s)| \le K r_j,
	\end{align}
	and the separation of the $Q_i$'s, \eqref{e:separation-gluing-prime}, gives 
	\[\dist((x,t),(y,s)) \gtrsim_n \max\{r_i,r_j\}.\] 
	Thus,
	\[|h(x,t) - h(y,s)| \le 2K \max\{r_i,r_j\} \lesssim K \dist((x,t),(y,s)).\]

	We will now check that $h$ is regular parabolic Lipschitz with the above stated constant. Fix $Q_0 \in \dd$. We are required to show
	\begin{align}\label{e:h-Carleson} 
		\sum_{Q \in \dd(Q_0)} \gamma_h(3Q)^2\ell(Q)^{n+1} \le C_n (M + L^2 + K^2) \ell(Q_0)^{n+1} ,
	\end{align} 
	where the implicit constant is independent of $Q_0$. We will split the sum into three separate pieces. First, let $\{R_j\}$ be a collection of maximal cubes in $\dd(Q_0)$ such that 
	\begin{align*}
		3R_j \cap \bigcup_{i} 3Q_i = \emptyset. 
	\end{align*}
	By definition, $h \equiv 0$ outside of $\bigcup_i 3Q_i$. In particular, we have 
	\begin{align}\label{e:R_j}
		\sum_j \sum_{Q \subseteq R_j} \gamma_h(3Q)^2 \ell(Q)^{n+1} = 0. 
	\end{align}
	Thus, it only remains to control the sum over cubes in $S \coloneqq \dd(Q_0) \setminus \bigcup_j \dd(R_j)$. Below, we will assume $S \neq \emptyset$ or else there is nothing to show. We split this into two families of cubes: those which are small with respect to the $Q_i$ which intersect it, and those which are large. Indeed, let 
	\begin{align*}
		S_1 &\coloneqq \{Q \in S \colon \mbox{ there exists } i \mbox{ such that } 3Q \cap 3Q_i \neq \emptyset \mbox{ and } \ell(Q_i) \geq 3\ell(Q) \}; \\
		S_2 &\coloneqq S \setminus S_1. 
	\end{align*}  
	
	We immediately note the following. 
	
	\noindent\textbf{Claim 1:} 
	For each $Q \in S_1$ there is a unique $i \in I$ such that $6Q \cap 3Q_i \neq \emptyset$, if such an index exists. In the case that it does exist, we have
	\begin{align}\label{e:in-5-time}
		Q \subseteq 4Q_{i} \quad \mbox{ and } \quad  6Q \subseteq 10Q_{i} . 
	\end{align}

		Indeed, let us assume that such an $i \in I$ exists (otherwise the claim is vacuous). The estimates in \eqref{e:in-5-time} follow directly from the triangle inequality and the definition of $S_1$. The uniqueness follows from the first inequality in \eqref{e:in-5-time} and the separation condition \eqref{e:separation-gluing-prime}. 
	
	Our first goal is to show
	\begin{align}\label{e:S_1}
		\sum_{Q \in S_1} \gamma_h(3Q)^2\ell(Q)^{n+1} \lesssim (M + L^2 + K^2) \ell(Q_0)^{n+1}. 
	\end{align}
	We claim that it suffices to show 
	\begin{align}\label{e:S_1-prelim}
		\sum_{\substack{Q \subseteq 4Q_1}} \gamma_h(3Q)^2\ell(Q)^{n+1} \lesssim (M + L^2 + K^2) \ell(Q_1)^{n+1} \mbox{ for all } Q_1 \in S_1. 
	\end{align}
	Indeed, if $Q_0 \in S_1$, then clearly \eqref{e:S_1-prelim} implies \eqref{e:S_1}. Suppose instead that $Q_0 \in S_2$. Let $Q \in S_1$ and let $i$ such that $3Q \cap 3Q_i \neq \emptyset$. Since $Q \subseteq Q_0$, we know that $3Q_0 \cap 3Q_i \neq \emptyset$, which (since $Q_0 \in S_2$) implies 
	\begin{align}\label{e:5Q_0}
		Q_i \subseteq 20Q_0.
	\end{align}
	In particular, since we are assuming \eqref{e:S_1-prelim} for each $Q_i$ (which is clearly in $S_1$), it follows from \eqref{e:in-5-time}, \eqref{e:5Q_0} and the fact that cubes $\{Q_i\}$ are disjoint, that  
	\begin{align*}
		\sum_{Q \in S_1} \gamma_h(3Q)^2\ell(Q)^{n+1} &\leq \sum_{Q_i \subseteq 20Q_0} \sum_{Q \subseteq 4Q_i} \gamma_h(3Q)^2\ell(Q)^{n+1} \\
		&\lesssim (M + L^2 + K^2) \sum_{Q_i \subseteq 20Q_0} \ell(Q_i)^{n+1} \\
		&\lesssim (M + L^2 + K^2) \ell(Q_0)^{n+1}, 
	\end{align*}  
	as required.

	We now prove \eqref{e:S_1-prelim}. Let $Q_1 \in S_1$ and fix, momentarily, $Q \subset 4Q_1$. We seek an estimate on $\gamma_h(3Q)^2$ and, therefore, we may assume $3Q \cap 3Q_i \neq \emptyset$ (since otherwise $\gamma_h(3Q)^2 = 0$).  Since each $\eta_j$ is supported on $3Q_j$, it follows from the separation condition \eqref{eq:sepcubesprop} and the second inclusion in \eqref{e:in-5-time} that 
	\begin{align}\label{e:reference}
		h \equiv \eta_i \theta_i \quad \mbox{ on } 3Q. 
	\end{align}
	Note, by hypothesis,
	\[|\theta_i| \le K r_i, \quad \text{ on } 3Q.\]
	
	Let $A_1,A_2 \colon \RR^n \to \RR$ be the infimising affine function for $\gamma_{\eta_i}(3Q)$ and $\gamma_{\theta_i}(3Q)$. Since $\eta_i$ and $\theta_i$ are Lip(1,1/2) with constants $C r_i^{-1}$ and $L$, respectively, we have that $|\nabla A_1| \lesssim r_i^{-1}$ and $|\nabla A_2| \lesssim L$. By Taylor's Theorem, there is a third affine function $A_3 \colon \RR^n \to \RR$ such that 
	\begin{align*}
		| A_1A_2(x) - A_3(x) | \lesssim Lr_i^{-1}|x-x_Q|^2 \lesssim L r_i^{-1} \ell(Q)^2 
	\end{align*}
	for all $(x,t) \in 3Q$. Using this, \eqref{eq:closeness} and \eqref{eq:eta}, we now have
	\begin{align*}
		|h(x,t) - A_3(x)| &\leq |\eta_i(x,t)\theta_i(x,t) - A_1(x)A_2(x)| + |A_1(x)A_2(x) - A_3(x)|  \\
		&\leq |\eta_i(x,t) - A_1(x)||\theta_i(x,t)| + \||A_1\|_{L^\infty(3Q)}|\theta_i(x,t) - A_2(x)| +CLr_i^{-1}\ell(Q)^2 \\
		&\lesssim |\eta_i(x,t) - A_1(x)| Kr_i + |\theta_i(x,t) - A_2(x)| +CLr_i^{-1}\ell(Q)^2
	\end{align*}
	for each $(x,t) \in 3Q$. Here we used that $\|A_1\|_{L^\infty(3Q)} \lesssim 1$, since $A_1$ is the $L^2(3Q)$ projection of $\eta_i$ onto the space of spatially affine functions, $\|\eta\|_{Lip(1,1/2)} \lesssim r_i^{-1}$ and $\|\eta\|_{L^\infty} \le 1$. After dividing by $\ell(Q)$, squaring, and taking the average integral of $3Q$, the above estimate implies 
	\begin{align}\label{e:omega-est}
		\gamma_h(3Q)^2 \lesssim K^2 r_i^{2} \gamma_{\eta_i}(3Q)^2 + \gamma_{\theta_i}(3Q)^2 + L^2r_i^{-2}\ell(Q)^2. 
	\end{align}
	
	Since \eqref{e:omega-est} holds for arbitrary $Q \subseteq 6Q_1$, and $\eta_i$ and $\theta_i$ are regular parabolic Lipschitz with constant $(Cr_i^{-1},Cr_i^{-2})$ and $(L,M)$, respectively, we see that
	\begin{align*}
		\sum_{Q \subseteq 4Q_1}  \gamma_h(3Q)^2 \ell(Q)^{n+1} &\lesssim K^2r_i^2\sum_{Q \subseteq 4Q_1} \gamma_{\eta_i}(3Q)^2\ell(Q)^{n+1} +  \sum_{Q \subseteq 4Q_1} \gamma_{\theta_i}(3Q)^2\ell(Q)^{n+1} \\
		&\hspace{2em} + L^2r_i^{-2}\sum_{Q \subseteq 4Q_1} \ell(Q)^{n+3} \lesssim (K^2 +M + L^2)\ell(Q_1)^{n+1}
	\end{align*}
	(note, the third term is estimated by summing a geometric series), as required for \eqref{e:S_1}.  
	
	We now deal with the sum over $S_2$. We can assume that $Q_0 \in S_2$ as otherwise $S_2 = \emptyset$ and there is nothing to show. In this case, similar to how we argued for \eqref{e:in-5-time} and \eqref{e:5Q_0}, we have 
	\begin{align}\label{e:i-in-Q_0}
		3Q_i \subseteq 50Q_0  \mbox{ for all } i \mbox{ such that } 3Q_i \cap 3Q_0 \neq \emptyset. 
	\end{align} 
	By \eqref{eq:closeness} and \eqref{eq:eta}, we have $\eta_i\theta_i \le C_nK r_i$ on $3Q_i$ for each $i$.	Since $h \equiv 0$ outside of $\bigcup_i 3Q_i$, this implies
	\begin{align}
		\begin{split}\label{e:S_2-prime}
			\sum_{Q \in S_2} \gamma_h(3Q)^2\ell(Q)^{n+1} &= \sum_{Q \in S_2} \sum_{\substack{i \\ 3Q_i \cap 3Q \neq\tiny{\emptyset}}} \int_{3Q_i} \left( \frac{\eta_i\theta_i}{\ell(Q)} \right)^2 \, d\cH^{n+1}_{\rm par} \\
			&\lesssim K^2\sum_{Q \in S_2}   \sum_{\substack{i \\ 3Q_i \cap 3Q \neq\tiny{\emptyset}}} \left(\frac{r_i}{\ell(Q)} \right)^2 r_i^{n+1} \\
			&\leq K^2 \sum_{\substack{i \\ 3Q_i \subseteq 50Q_0}} r_i^{n+1} \sum_{\substack{Q \in S_2 \\ 3Q_i \cap 3Q \neq\tiny{\emptyset}}} \left(\frac{r_i}{\ell(Q)} \right)^2,
		\end{split}
	\end{align}
	where in the final inequality we used \eqref{e:i-in-Q_0}. Fix some $i$ such that $3Q_i \subseteq 50Q_0$ and let $k \in \ZZ$ such that $Q_i \in \dd_k$. By the definition of $S_2$, 
	\begin{align*}
		\{Q \in S_2 \colon 3Q_i \cap 3Q \neq \emptyset \} \subseteq \bigcup_{m=-\infty}^k \dd_m.
	\end{align*}
	By doubling, 
	\begin{align*}
		|\{Q \in S_2 \cap \dd_m \colon 3Q_i \cap 3Q \neq \emptyset \}| \lesssim 1 \quad \mbox{ for all } m \leq k. 
	\end{align*}
	Thus, we can estimate
	\begin{align*}
		\sum_{\substack{Q \in S_2 \\ 3Q_i \cap 3Q \neq\emptyset}}  \left(\frac{r_i}{\ell(Q)} \right)^2 = \sum_{m=-\infty}^k \sum_{\substack{Q \in S_2 \cap \dd_m \\ 3Q_i \cap 3Q \neq\tiny{\emptyset}}} 2^{-m+k} \lesssim \sum_{m=-\infty}^k 2^{-m+k}  \lesssim 1. 
	\end{align*}
	Plugging this estimate back in \eqref{e:S_2-prime} gives 
	\begin{align}\label{e:S_2}
		\sum_{Q \in S_2} \gamma_h^2(3Q)^2\ell(Q)^{n+1} \lesssim K^2 \sum_{\substack{i \\ 3Q_i \subseteq 50Q_0}} r_i^{n+1} \lesssim K^2 \ell(Q_0)^{n+1}. 
	\end{align}
	
	The estimates \eqref{e:R_j}, \eqref{e:S_1} and \eqref{e:S_2} complete the proof of \eqref{e:h-Carleson}.

	We have now verified the conclusion of the lemma under the assumption (H2). Let us describe how to modify the proof to obtain the conclusion under the assumption (H1). The proof that $h$ is $CL$ Lipschitz essentially the same as in the case (H2) except that the estimate \eqref{e:h_i-Lip} becomes 
	\begin{align}
		\|h_i\|_{Lip(1,1/2)} \lesssim L
	\end{align}
	and \eqref{e:size-h} becomes 
	\begin{align}
			|h(x,t)| \lesssim Lr_i \quad \mbox{ and } \quad |h(y,s)| \lesssim L r_j. 
	\end{align} 
	To check that $h$ is regular parabolic Lipschitz one proves \eqref{e:h-Carleson} with constant $C_n(M+L^2)$. Splitting again into $S_1$ and $S_2$, this follows (by the same argument as after \eqref{e:S_1-prelim}) once we have the estimates  
	\begin{align*}
		\sum_{Q \subseteq 4Q_1} \gamma_h(3Q)^2\ell(Q)^{n+1} \lesssim M \ell(Q_1)^{n+1} \mbox{ for all } Q_1 \in S_1. 
	\end{align*}
	and 
	\begin{align*}
		\sum_{Q \in S_2} \gamma_h(3Q)^2\ell(Q)^{n+1} \lesssim K^2 \ell(Q_0)^{n+1}. 
	\end{align*}
	Since $h$ is supported in $\cup_i 3Q_i$ and $3Q \subseteq 6Q_1$ for any $Q \subseteq 4Q_1$ with $\ell(Q) \leq \ell(Q_1)$, the first estimate is trivial if $6Q_1 \cap \cup_i 3Q_i = \emptyset$. If $6Q_1 \cap \cup_i 3Q_i \neq \emptyset$, it follows from Claim 1 that there is a unique $i \in I$ such that $6Q_1 \cap 3Q_i \neq \emptyset$ and, moreover, $6Q_1 \subseteq 10Q_i$. In particular, $\gamma_h(3Q) = \gamma_{h_i}(3Q)$ for any $Q \subseteq 4Q_1$. The first estimate now follows directly from the fact that $h_i$ is $(L,M)$-regular in $10Q_i$. The second estimate follow from the same argument as in the (H2) case (up to notational changes). 
\end{proof}

\section{BPRPLI implies P-UR}

The goal for this section is to prove the following.

\begin{theorem}\label{t:BPRPLI-implies-UR}
	Suppose $\Sigma \subseteq \RR^{n+1}$ is parabolic Ahlfors-David regular and has big pieces of regular parabolic Lipschitz images of $n$-dimensional space, then $\Sigma$ is parabolic uniformly rectifiable.
\end{theorem}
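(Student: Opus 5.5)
We will follow the outline given in the introduction: reduce to bi-Lipschitz images via a transference principle, and then verify directly that regular parabolic bi-Lipschitz images are P-UR.

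\emph{Step 1 (bi-Lipschitz pieces).} Fix $Q \in \dd(\Sigma)$ and the $(L,M)$-regular parabolic Lipschitz map $F = F_Q$ from Definition \ref{BPRPBI.def}, so that $\sigma(F(B_n(\mathbf{0},\ell(Q))) \cap Q) \ge \theta\sigma(Q)$. We will prove a parabolic Jones-type decomposition: there is a set $A \subseteq B_n(\mathbf{0},\ell(Q))$ with $\sigma(F(A)\cap Q) \gtrsim \theta \sigma(Q)$ on which $F$ is $C$-bi-Lipschitz, with $C = C(n,L,\theta)$.

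The intended mechanism is a stopping-time argument on dyadic cubes of $\RR^n$ (with respect to $d_\infty$). For each cube $R$ we compare $\cH^{n+1}_{\rm par}(F(R))$ with $\ell(R)^{n+1}$. Cubes where $F$ compresses volume contribute little image measure. By the area-type inequality $\sigma(F(E)) \lesssim L^{n+1}|E|$ and ADR, the compressing cubes can be discarded while retaining a big piece.

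On the remaining set we need to rule out nearby points being sent close together. Since $F_{n+1} = t + c$, two points $(x,t),(y,s)$ with $|t-s|^{1/2}$ comparable to their distance are automatically separated. It therefore suffices to control the spatial separation of points with nearly equal time, which is where the Euclidean Jones argument can be run essentially slice-by-slice at each scale. I expect this step to be the main obstacle: the non-isotropic geometry complicates the Euclidean proof, and I would first try to adapt David--Semmes' "big projections" proof to the parabolic metric.

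\emph{Step 2 (bi-Lipschitz extension preserving regularity).} Next we extend $F|_A$ to a globally regular parabolic bi-Lipschitz map $\widetilde F:\RR^n \to \RR^{m+1}$ into a higher dimensional space time, in the style of David--Semmes. Take a Whitney decomposition $\{W_i\}$ of $\RR^n \setminus \overline{A}$ and smooth bumps $\eta_i$ adapted to $3W_i$. Set
\[
\widetilde F = \Bigl(F,\ \sum_i \ell(W_i)\,\eta_i\, e_{k(i)},\ t+c\Bigr),
\]
where the colours $k(i)$ are chosen from finitely many extra coordinates so that cubes sharing a colour are $10^5$-separated. This guarantees lower bounds on distances between points not both in $A$.

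Each added coordinate is a sum of the form in (H2) of Lemma \ref{l:gluing-for-regular-functions-prime}, with $\theta_i \equiv \ell(W_i)$ after rescaling, or via (H1) directly. The lemma therefore shows that each new coordinate is regular Lip(1,1/2) with controlled constants. The coordinates of $F$ are regular by hypothesis. The bi-Lipschitz lower bound will be checked by cases, according to whether the points lie in $A$, in the same Whitney cube, or in separated cubes.

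\emph{Step 3 (transference and the model case).} A regular parabolic bi-Lipschitz image $\Gamma = \widetilde F(\RR^n) \subseteq \RR^{m+1}$ is P-UR (with $n$-dimensional planes in $\RR^{m+1}$). To see this, for a cube $R \subseteq \RR^n$ let $A_j$ be the optimal spatial-affine approximants for $\widetilde F_j$ on $3R$. The map $(x,t)\mapsto (A_1(x),\dots,A_m(x),t+c)$ has image contained in a $t$-independent plane. By Pythagoras, the squared distance from $\widetilde F$ to this plane is at most $\sum_j |\widetilde F_j - A_j|^2$, so
\[
\beta_\Gamma(\widetilde F(\Xbf_R), c\,\ell(R))^2 \lesssim \sum_j \gamma_{\widetilde F_j}(3R)^2 .
\]
The bi-Lipschitz property transfers the Carleson measures between $\RR^n$ and $\Gamma$, and we sum over $j$.

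Finally, $\Sigma$ has big pieces of P-UR sets of dimension $n+1$ inside $\RR^{m+1}$, with uniform constants. The transference principle of \cite{BHHLN-BP, DS-AMS, Rigot} (big pieces of P-UR implies P-UR, together with the fact that P-UR in a larger space restricted to $\Sigma \subseteq \RR^{n+1}$ agrees with P-UR there) yields that $\Sigma$ is P-UR.
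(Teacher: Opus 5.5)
Your Steps 2 and 3 follow the paper's route. That route is: stretching off the good set in extra coordinates, glued via Lemma \ref{l:gluing-for-regular-functions-prime}; Pythagoras as in Lemmas \ref{c:dor} and \ref{l:geometric-lemma-bi-lip-images}; then stability under big pieces plus the projection of Lemma \ref{l:dimension-reduction}. But Step 1, which carries the real content of this direction, is not proved, and the mechanism you sketch does not close.

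A Euclidean Jones argument run on each time slice gives a good set $E_s$ for each map $\psi(\cdot,s)$. The time coordinate handles pairs with $|t-s|^{1/2}\ge\epsilon|x-y|$. The remaining pairs have $0<|t-s|^{1/2}<\epsilon|x-y|$. For these, the estimate $|\psi(x,t)-\psi(y,s)|\ge|\psi(x,s)-\psi(y,s)|-L\epsilon|x-y|$ helps only if $x\in E_s$ (or symmetrically $y\in E_t$). Nothing in a slice-wise argument makes the sets $E_s$ compatible across nearby times.

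More basically, your plan never uses the regularity of $F$: your constant $C(n,L,\theta)$ does not depend on $M$. You would therefore be proving a Jones decomposition for arbitrary Lip(1,1/2) maps, and the multiscale engine of the Euclidean proof is unavailable there. For example, take $f(x,t)=\phi(t)$ with $\phi$ a Weierstrass-type function whose mean oscillation on every interval of length $r^2$ is $\approx r$. Then $\gamma_f\gtrsim 1$ on every cube of side at most one, so no packing of non-flat cubes can hold.

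The missing idea is the one in Theorem \ref{t:lip-bi-lip}.
\begin{itemize}
\item Flatness is measured on whole parabolic cubes by $\gamma_\psi$, with one time-independent affine map of the spatial variables. Goodness of a cube is then a space-time property at that scale, and no slice compatibility is needed.
\item Regularity of each component and Pythagoras (Lemma \ref{c:dor}) give the Carleson packing of $\gamma_\psi(3I)^2\ell(I)^{n+1}$. By Chebyshev, the points lying in (dilates of) at least $N$ cubes with $\gamma_\psi>\alpha$ form a set whose image has small content.
\item A compactness argument (Lemma \ref{l:coarse-bi-lip}) shows the following. If $\gamma_\psi(10I)<\alpha$ and $\cH^{n+1}_{par,\infty}(F(I))>\varepsilon\,\cH^{n+1}_{par}(I)$, then $\dist(F(\Xbf),F(\Ybf))\ge\tau\dist(\Xbf,\Ybf)$ for $\Xbf,\Ybf\in 2I$ with $\dist(\Xbf,\Ybf)\ge b\diam(I)$. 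The reason is that in the limit $\psi$ equals a degenerate affine map of $x$ alone, so every time slice of the image is $\cH^{n-1}$-null.
\item Jones' coding argument then produces the bi-Lipschitz pieces.
\end{itemize}

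Step 2 also needs repair, in two places.
\begin{itemize}
\item A scalar bump $\ell(W_i)\eta_i e_{k(i)}$ cannot separate two points of one Whitney cube at the same time when $F$ collapses that cube, which is allowed off $A$. You need vector-valued bumps containing a localized copy of the spatial variables, such as $\phi_I=(x\eta_1,\eta_2)$ from Lemma \ref{l:bubbles}.
\item The colour classes must be separated by a factor depending on $L/\tau$, not an absolute $10^5$. Otherwise two same-colour cubes of equal size at distance $\approx 10^5\diam(I)$ can have their points identified by every coordinate: the bumps take the same values, and $F$ need not separate them at that intermediate distance.
\end{itemize}
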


Before getting to this, we need a quantitative bi-Lipschitz decomposition result for parabolic Lipschitz functions in the spirit of Jones \cite{jones1988lipschitz}. We provide this below.

\subsection{Bi-Lipschitz decompositions of regular parabolic maps} 

In \cite{jones1988lipschitz}, Jones proved that the domain of any Lipschitz map $f \colon [0,1]^n \to \RR^m$ ($m \geq n$) can be decomposed into finitely many pieces on which $f$ is bi-Lipschitz and a garbage set whose image under $f$ has small Hausdorff content. Here, we prove a version of this result for regular parabolic Lipschitz functions. Our strategy is based on the original strategy of Jones and the generalization due to David and Semmes \cite{david1993quantitative}. Below, $\Delta$ denotes that standard grid of parabolic dyadic cubes on $n$-dimensional space time $\mathbb{R}^n$.

\begin{theorem}\label{t:lip-bi-lip}
	For each $L,M \geq 1$ and $\varepsilon > 0$, there are $K \geq 1$ and $\tau > 0$ such that the following holds. Let $I_0 \in \Delta$ and $\psi \colon \RR^n \to \RR^{n+1}$ be $(L,M)$-regular parabolic Lipschitz.
	Then, there is a decomposition $I_0 = B \cup F_1 \cup \dots \cup F_K$ such that image of $B$ under $\psi$ has small parabolic Hausdorff content, i.e., 
	\begin{align}\label{e:B-small-image}
		\mathcal{H}_{par,\infty}^{n+1}\left( \psi(B) \right) < \varepsilon \mathcal{H}_{par}^{n+1}(I_0), 
	\end{align} 
	and such that each $F_j$ is compact and $\psi|_{F_{j}}$ is bi-Lipschitz with upper constant $L$ and lower constant $\tau$, i.e.,  
	\begin{align}
	 \tau \dist((x,t),(y,s)) \leq 	\dist(\psi(x,t) , \psi(y,s)) \leq  L \dist((x,t),(y,s))
	\end{align}
	for all $(x,t) , (y,s) \in F_j$. 
\end{theorem}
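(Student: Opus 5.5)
The plan is to follow Jones' original strategy in the form given by David and Semmes \cite{david1993quantitative}: a stopping-time/counting argument on the dyadic cubes $\Delta(I_0)$, driven by the Carleson condition on the $\gamma$-numbers of the spatial coordinates $\psi_1,\dots,\psi_n$. First, since $\psi_{n+1}(x,t) = c+t$, we have $\dist(\psi(x,t),\psi(y,s)) \ge |t-s|^{1/2}$. So the only way bi-Lipschitz control can fail is through the spatial displacement $|x-y|$ being collapsed while $|t-s|^{1/2}\ll |x-y|$. For each cube $Q\in\Delta(I_0)$ I pick spatially affine approximants $A^Q_i$ realizing $\gamma_{\psi_i}(3Q)$ and form the affine map $A^Q(y,s) = (A^Q_1(y),\dots,A^Q_n(y),c+s)$. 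Its spatial part has linear map $T_Q\colon\RR^{n-1}\to\RR^n$ with $\|T_Q\|\lesssim L$.

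I then classify cubes into three types. A cube $Q$ is \emph{bad} if $\sum_i\gamma_{\psi_i}(3Q) > \delta$, for a small $\delta=\delta(\varepsilon,L,M)$. A good cube is \emph{degenerate} if the smallest singular value of $T_Q$ is less than $\alpha$, with $\alpha$ small, and \emph{nondegenerate} otherwise. The Carleson condition in the discretized geometric lemma gives
\[
\sum_{Q\in\Delta(I_0)\text{ bad}}\ell(Q)^{n+1}\le C\delta^{-2}nM\,\ell(I_0)^{n+1}.
\]
Hence the set $B_N$ of points of $I_0$ lying in more than $N$ bad cubes has measure at most $C\delta^{-2}M N^{-1}|I_0|$. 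Its image under the Lipschitz map $\psi$ has content at most $L^{n+1}$ times that, which is less than $\varepsilon/3$ for large $N$.

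For degenerate cubes I use the standard Jones observation. If $Q$ is good and degenerate, then $\psi(Q)$ lies within $C\delta\ell(Q)$ (by $L^2$-to-$L^\infty$ control, using the Lipschitz bound and the fact that $\delta$ is small relative to the scale) of an affine image of $Q$ that is squeezed by a factor $\alpha$ in one spatial direction. It can therefore be covered by parabolic balls of total $(n+1)$-content at most $C(\alpha+\delta)^{1/?}\ell(Q)^{n+1}$; more simply, it is covered by about $\alpha^{-(n+1)+1}$ balls of radius $\alpha\ell(Q)$, giving content at most $C\alpha\,\ell(Q)^{n+1}$. Taking a maximal disjoint family of degenerate good cubes, the part of $I_0$ lying in such cubes has image of content at most $C\alpha|I_0|$. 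For the remainder I plan to stop at the first degenerate cube and treat the image of everything below it as garbage. This handles the points whose top-most non-bad ancestors are degenerate. The finer accounting (points that pass through at most $N$ bad cubes and then hit a degenerate one) follows \cite{david1993quantitative}: along any chain, the bad cubes split it into at most $N+1$ runs of good cubes. I then argue that within a run, $T_Q$ varies slowly, because consecutive good cubes have comparable affine approximants with errors $\lesssim\delta$. So nondegeneracy at the top of a run persists for a definite number of generations, or degeneracy is detected and charged to the $C\alpha$ content bound.

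The remaining points are those lying in at most $N$ bad cubes and in no degenerate cube. I split them into $K$ pieces according to the combinatorial pattern of their bad ancestors: the generations (relative to $\ell(I_0)$) at which they occur, coded coarsely by the dyadic cube at the bad generation, together with a finite $\alpha$-net for the conformal class of $T_Q$ at the start of each run. To make $K$ finite, I first pass to a generation $k_0$ and a dyadic subcube, and absorb points whose pattern involves generations deeper than some depth $D$ into $B$. This is justified by a pigeonhole argument on the Carleson sum, as in Jones. For $(x,t),(y,s)$ in the same piece, I take the smallest cube $Q$ containing both, so that $\dist((x,t),(y,s))\sim\ell(Q)$. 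If $Q$ is good and nondegenerate, then
\[
|\psi(x,t)-\psi(y,s)|\ge |A^Q(x,t)-A^Q(y,s)|-C\delta\ell(Q)\ge(\alpha|x-y|+|t-s|^{1/2})/C-C\delta\ell(Q),
\]
which is at least $\tau\ell(Q)$ once $\delta\ll\alpha$. If $Q$ is bad, the shared pattern means both points lie in a common child chain, which forces a contradiction with $Q$ being the smallest common cube, or lets us pass to the previously fixed net element. I then make each $F_j$ compact by inner regularity, shrinking it slightly and moving the difference into $B$.

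The main obstacle is the $L^\infty$ step. The $\gamma$-numbers are $L^2$ averages, and parabolic Lipschitz functions only control time oscillation at order $\ell(Q)$. So converting $\gamma_{\psi_i}(3Q)\le\delta$ into the pointwise bound $|\psi_i-A^Q_i|\lesssim\delta^{2/(n+3)}\ell(Q)$ on $Q$ uses the Lipschitz bound in the usual interpolation argument, and I expect this to work with a worse power of $\delta$. The second delicate point is to make sure $K$ depends only on $(L,M,\varepsilon)$. Here the regularity (Carleson) hypothesis, not mere Lipschitz continuity, is essential: by the Kaufman--Wu phenomenon, general Lip(1,1/2) graphs need not admit such a decomposition.
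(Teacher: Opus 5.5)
Your analytic ingredients are sound and follow the same Jones scheme as the paper. Chebyshev on the Carleson sum controls the points lying in many non-flat cubes. On flat cubes, your singular-value dichotomy is an explicit substitute for the paper's compactness Lemma~\ref{l:coarse-bi-lip} together with the small-image family $\mathcal{B}_1$: nondegenerate gives a lower bound at scale $\ell(Q)$, and degenerate gives image content $\lesssim_L(\alpha+\delta^{2/(n+3)})\ell(Q)^{n+1}$, which your $L^2\to L^\infty$ interpolation and covering count do yield.

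The gap is in the combinatorial step, which the paper delegates to Jones. Your version fails in two concrete places.

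\textbf{Scale comparability.} For the smallest dyadic cube $Q$ containing both points you only have $\dist((x,t),(y,s))\lesssim\ell(Q)$, not $\sim$. Two nearby points separated by a face shared by dyadic cubes of all generations (e.g.\ straddling $x_1=\tfrac12$ in the unit cube) have smallest common ancestor as large as $I_0$. Your bound $(\alpha|x-y|+|t-s|^{1/2})/C-C\delta\ell(Q)$ is then negative. You need a cube of size comparable to the distance. One option is the paper's smallest $I\ni(x,t)$ with $(y,s)\in 2I$ (Lemma~\ref{l:b}), which is why the paper counts dilates $10I$ of bad cubes. The other is a common cube in one of finitely many shifted grids, described below.

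\textbf{Bounded code.} Your code does not take boundedly many values: the bad generations, or the cubes at them, range over an infinite set. Discarding into $B$ the points whose pattern goes deeper than a depth $D=D(L,M,\varepsilon)$ is not justified by the Carleson sum. That sum bounds the total mass of bad cubes but not the scales at which it sits. For example, add $c\,2^{-D'}\sin(2^{D'}x_1)$, with $c$ fixed, to the last spatial coordinate of $(x,t)\mapsto(x,0,t)$.
\begin{itemize}
\item The resulting map has $(L,M)$ independent of $D'$, is globally bi-Lipschitz, and has no degenerate cubes.
\item Yet every point of $I_0$ has boundedly many bad ancestors, all at generations $\approx D'$.
\item For $D'$ much larger than $D$, your $B$ is all of $I_0$, so \eqref{e:B-small-image} fails.
\end{itemize}

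\textbf{The fix.} The generations should not be coded at all. Code a surviving point by the list, in order of decreasing size, of which child of each of its at most $N$ bad ancestors contains it; there are at most $\sum_{m\le N}2^{(n+1)m}$ codes. Suppose two points share a code and their smallest common ancestor were bad. Their ancestors above it coincide, so it has the same rank in both lists, and they would lie in the same child — a contradiction.

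Because this argument needs nested ancestors, pair it with finitely many shifted parabolic grids. Use shifts in $\{0,\tfrac13,\tfrac23\}$, alternating in sign with the generation in each spatial coordinate, and constant for the $4$-adic time grid (which stays nested since $4\cdot\tfrac13-\tfrac13\in\ZZ$). Take the product code over the grids. Put into $B$ the degenerate good cubes of every grid and the points with more than $N$ bad ancestors in any grid. Then each pair in a piece has, in a suitable grid, a smallest common ancestor of size comparable to its distance that is good and nondegenerate, so your estimate applies. The run analysis and the net for the conformal class of $T_Q$ are then unnecessary.
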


Before embarking on the proof of Theorem \ref{t:lip-bi-lip}, we collect two useful auxiliary lemmas. The first introduces a convenient geometric constant, and the second provides a sort of weak bi-Lipschitz behavior of Lipschitz function on cubes where it is approximately affine and has large image. 

\begin{lemma}\label{l:b}
	Let $(x,t) , (y,s) \in \RR^n$ and let $I \in \Delta$ be the smallest parabolic cube containing $(x,t)$ such that $(y,s) \in 2I$. Then, 
	\begin{align}
		\dist((x,t),(y,s)) \geq b\diam(I)
	\end{align}
	for some $b > 0$ which depends only on dimension. 
\end{lemma}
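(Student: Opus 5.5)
The argument is by contradiction, using minimality of $I$ together with the nested structure of the standard parabolic dyadic grid $\Delta$ on $\RR^n$. Write $I^\uparrow$ for the cube $I$ itself and $\widehat{J}$ for the dyadic parent of a cube $J$. Let $J \in \Delta$ be the dyadic child of $I$ containing $(x,t)$; this makes sense because $I$ is the smallest cube containing $(x,t)$ with $(y,s) \in 2I$, so $I$ is not of minimal size in any sense other than this. By minimality, $(y,s) \notin 2J$.

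The key observation is that $2J$ contains a $d_\infty$-neighbourhood of $J$ of size comparable to $\ell(J)$. Concretely, the cube $2J = Q(\Xbf_J, \ell(J))$ and $J = Q(\Xbf_J,\ell(J)/2)$ (in the paper's convention $\ell(Q)=2r$). So any point $\Ybf$ with $d_\infty(\Ybf, J) < \ell(J)/2$ lies in $2J$, by the triangle inequality for $d_\infty$. Since $(x,t) \in J$ and $(y,s)\notin 2J$, this gives
\[
d_\infty((x,t),(y,s)) \ge \ell(J)/2 = \ell(I)/4 .
\]
One small point needs a remark. With parabolic dyadic cubes the child ratio in time is $1/4$ and in space $1/2$, but $\ell$ is measured in the parabolic metric, so $\ell(J)=\ell(I)/2$ in each coordinate's parabolic scaling. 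The neighbourhood statement must be checked in the time coordinate, where $|t-s|^{1/2} < \ell(J)/2$ means $|t - s| < \ell(J)^2/4$, which is precisely the half-width of $2J$ minus that of $J$ in time after squaring. I would verify this carefully, since it is the only place where the anisotropic scaling matters.

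To conclude, compare metrics. We have $\dist \ge d_\infty$, since $|X-Y| \ge \max_i |x_i - y_i|$. We also have $\diam(I) \le C_n \ell(I)$, because $\diam$ in $\dist$ is at most $(n-1)\ell(I) + \ell(I)$ up to a constant. Hence
\[
\dist((x,t),(y,s)) \ge \tfrac14 \ell(I) \ge b \diam(I), \qquad b = (4C_n)^{-1}.
\]

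The main (mild) obstacle is the existence and well-definedness of $I$ together with the boundary conventions for closed versus half-open dyadic cubes. If cubes are half-open, "$(y,s)\notin 2J$" must still yield a strict-distance bound. I would handle this by using closed dilates, so that the complement of $2J$ is at $d_\infty$-distance at least $\ell(J)/2$ from $J$, which suffices. Existence of a smallest such $I$ holds if $(x,t)\neq(y,s)$, since small cubes around $(x,t)$ eventually exclude $(y,s)$ from their doubles. The case $(x,t)=(y,s)$ is vacuous or excluded.
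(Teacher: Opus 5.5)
Your argument is correct and is the same as the paper's: take the child $J$ of $I$ containing $(x,t)$, use minimality to get $(y,s)\notin 2J$, and turn this into $\dist((x,t),(y,s)) \ge d_\infty((x,t),(y,s)) \ge \ell(J)/2 = \ell(I)/4 \gtrsim_n \diam(I)$, which is the step the paper leaves as ``follows directly.'' The time-coordinate check you flag needs no separate treatment: $d_\infty$ satisfies the triangle inequality (because $s\mapsto s^{1/2}$ is subadditive), so $d_\infty((y,s),\Xbf_J) > \ell(J)$ and $d_\infty((x,t),\Xbf_J)\le \ell(J)/2$ already give the bound.
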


\begin{proof}
	Let $J \in \Delta$ be the child of $I$ such that $(x,t) \in J$. By minimality, $(y,s) \not\in 2J$, and the estimate follows directly from this. 
\end{proof}

We define the $\gamma$-numbers for mappings $f \colon \RR^n \to \RR^m$ (equipping $\RR^m$ with the Euclidean metric) in the obvious way. Indeed, we let 
\[\gamma_f(\Xbf, r)^2 = \inf_{A}\frac{1}{|Q(\Xbf,r)|}  \iint_{Q(\Xbf,r)} \frac{|f(y,s) - A(y)|^2}{r^2} \, dy \,ds,\]
where the infimum is over all affine mappings $A \colon \RR^{n-1} \to \RR^m$ of the spatial variables, that is, each component of $A_i$ of $A$ is of the form $A_i(y) = a_i + \vec{b}_i\cdot y$.

\begin{lemma}\label{l:coarse-bi-lip}
	For each $L \geq 1$ and $\varepsilon > 0$, there are $\alpha, \tau > 0$ such that the following holds. Suppose $f \colon \RR^n \to \RR^{n+1}$ is parabolic $L$-Lipschitz map of the form 
	\begin{align}
		f(x,t) = (\psi(x,t), c + t) \in \RR^n \times \RR
	\end{align} 
	and $I \in \Delta$ is such that 
	\begin{align}
		\gamma_\psi(10I) < \alpha \quad \mbox{ and } \quad \mathcal{H}_{par,\infty}^{n+1}(f(I)) > \varepsilon \mathcal{H}^{n+1}_{par}(I).
	\end{align}
	Then, 
	\begin{align}
		\dist(f(x,t)  ,  f(y,s) ) \geq \tau \dist((x,t),(y,s)) 
	\end{align}
	for all $(x,t) , (y,s) \in 2I$ such that $\dist((x,t),(y,s)) \geq b \diam(I)$. 
\end{lemma}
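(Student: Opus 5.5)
The argument is by contradiction and compactness, combined with an explicit description of the affine approximant.

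Since the statement is invariant under parabolic rescaling and translation, we may normalize $I$ to be a fixed unit cube, say $I=[0,1]^{n-1}\times[0,1]$ up to the parabolic convention. This also fixes $c=0$ after a time translation, and the images may be translated so that $\psi$ vanishes at the centre of $I$. Let $A(y)=a+By$ be the affine map (in the spatial variables) nearly realising $\gamma_\psi(10I)<\alpha$, with $B$ an $n\times(n-1)$ matrix. Because $\psi$ is $L$-Lipschitz and $L^2$-close to $A$ on $10I$, a standard argument controls $|a|$ and $\|B\|$ by $C(L)$. The same argument, via the Lipschitz bound and the $L^2$ closeness on a cube of comparable size, gives a sup bound
\[
\sup_{10I\text{ (shrunk)}}|\psi-A|\le\omega(\alpha)\to0 .
\]
Here one interpolates $L^2$ smallness with the Lipschitz modulus: if $|\psi-A|>\delta$ at some point, then it exceeds $\delta/2$ on a parabolic ball of radius $\sim\delta/(L+\|B\|)$, which forces the $L^2$ average to be at least $c\,\delta^{n+3}$.

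The key use of the large-image hypothesis is to show that $B$ is quantitatively injective, i.e.\ its smallest singular value $\sigma_{\min}(B)\ge\lambda(\varepsilon,L)>0$. Suppose instead that $\sigma_{\min}(B)$ is tiny. Then $F(y,t)=(A(y),t)$ maps $I$ into a $\sigma_{\min}$-neighbourhood of an $(n-1)$-dimensional set of the form (affine $(n-2)$-plane)$\times\mathbb{R}$, intersected with a bounded region. Since $f$ is within $\omega(\alpha)$ of $F$ on $I$, the image $f(I)$ lies in a $(\sigma_{\min}+\omega(\alpha))$-neighbourhood of that set. A covering by parabolic balls of radius $r=\sigma_{\min}+\omega(\alpha)$ uses about $r^{-(n-2)-2}=r^{-n}$ balls, so
\[
\mathcal H^{n+1}_{par,\infty}(f(I))\lesssim_L r^{-n}\,r^{n+1}=r,
\]
which is smaller than $\varepsilon$ if both $\alpha$ and $\sigma_{\min}$ are small. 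Choosing $\alpha$ small depending on $\varepsilon,L$ therefore forces $\sigma_{\min}(B)\ge\lambda$.

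Now take $(x,t),(y,s)\in2I$ with $\dist((x,t),(y,s))\ge b\diam(I)\sim b$. The map $F$ satisfies
\[
\dist(F(x,t),F(y,s))=|B(x-y)|+|t-s|^{1/2}\ge \min(\lambda,1)\,\dist((x,t),(y,s)).
\]
Hence
\[
\dist(f(x,t),f(y,s))\ge \min(\lambda,1)\,\dist((x,t),(y,s))-2\omega(\alpha)\ge \tfrac12\min(\lambda,1)\,\dist((x,t),(y,s)),
\]
provided $2\omega(\alpha)\le\tfrac12\min(\lambda,1)\,b\diam(I)$. Since $\lambda$ depends only on $\varepsilon$ and $L$ (once $\alpha$ is below the first threshold), we can further shrink $\alpha$ to arrange this. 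We then set $\tau=\tfrac12\min(\lambda,1)$.

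The main obstacle is the covering estimate that converts "$B$ is nearly degenerate" into "small parabolic content". The count must be done carefully with parabolic balls: the time direction contributes exponent $2$ and the $n-2$ nondegenerate spatial directions contribute exponent $n-2$, for a total of $n$, which is less than $n+1$. Because of this gap the content estimate gains a factor $r$, and the argument goes through.
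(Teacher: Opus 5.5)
Your argument is correct, but it takes a genuinely different route from the paper's.

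\textbf{The paper's proof.} The paper argues by contradiction and compactness. It takes counterexamples $f_j$ with $\gamma_{\psi_j}(10I_j)<1/j$ and normalizes $I_j=J_0$. It passes to a uniform limit $f$ by Arzel\`a--Ascoli, and uses upper semicontinuity of $\mathcal{H}^{n+1}_{par,\infty}$ under Hausdorff convergence to retain the large-image condition. The limit is exactly $(A(x),t)$ on $10J_0$ and identifies two points with equal time and spatial separation $\geq b\diam(J_0)$. Hence $A$ has a kernel, so every time slice of $f(J_0)$ is $\mathcal{H}^{n-1}$-null. Via \cite[Remark 2.8]{BHHLN-corona}, this gives $\mathcal{H}^{n+1}_{par,\infty}(f(J_0))=0$, a contradiction.

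\textbf{Your proof.} You make each of these qualitative steps effective:
\begin{itemize}
\item The $L^2$-to-$L^\infty$ interpolation (Lipschitz bound plus $L^2$ smallness) replaces the uniform limit.
\item The covering count replaces ``a degenerate affine image is null''. The near-degenerate image needs only $\sim r^{-n}$ parabolic balls, i.e.\ $n$ rather than $n+1$ parabolic dimensions, so its content is $\lesssim_L r$.
\item This gives $\sigma_{\min}(B)\gtrsim_L\varepsilon$ directly, after which the lower bound is a triangle inequality against $(A(x),t)$.
\end{itemize}

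\textbf{What each approach buys.} Your version yields explicit constants: $\tau\approx\min(c_L\varepsilon,1)$, and $\alpha$ a power of $\varepsilon b$. It also does not rely on semicontinuity of content or on the slicing remark. The paper's version is shorter and avoids tracking the near-optimal affine map, the bound on $\|B\|$, and neighbourhood sizes. On the other hand, it gives no information on how $\alpha,\tau$ depend on $L,\varepsilon$.

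\textbf{Points that need care in your version.} Take the sup bound on a region containing $2I$ on which the balls of radius $\sim\delta/(L+\|B\|)$ (capped at $\sim\ell(I)$) stay inside $10I$. Also order the choices so that $\lambda=\lambda(\varepsilon,L)$ is fixed before $\alpha$ is shrunk the second time to absorb $2\omega(\alpha)$ against $b\diam(I)$. You already do the latter correctly.
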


\begin{proof}
	We prove this by compactness. Let $L \geq 1$, $\varepsilon > 0$ and suppose towards a contradiction that, for each $j \in \N$, there is a parabolic $L$-Lipschitz map $f_j \colon \RR^n \to \RR^{n+1}$ map of the form 
	\begin{align}
		f_j(x,t)  = (\psi_j(x,t) , c_j +t)
	\end{align}
	and a cube $I_j \in \Delta$ such that 
	\begin{align}\label{e:omega-sequence} 
		\gamma_{\psi_j}(10I_j) < 1/j \quad \mbox{ and } \quad \mathcal{H}_{par,\infty}^{n+1}(f_j(I_j)) > \varepsilon \mathcal{H}^{n+1}_{par}(I_j)
	\end{align}
	but 
	\begin{align}\label{e:fails-Lip-sequence}
		\dist( (x_j,t_j) , f(y_j,s_j) ) < \dist((x_j,t_j),(y_j,s_j))/j
	\end{align}
	for points $(x_j,t_j) , (y_j,s_j) \in 2I_j$ which satisfy $\dist((x_j,t_j),(y_j,s_j)) \geq b \diam(I_j)$. By scaling and translating we may assume that there is a fixed parabolic cube $J_0$ such that 
	\begin{align}
		I_j = J_0 \mbox{ for all } j \in \N. 
	\end{align}
	Moreover, since the properties are local and invariant under shifts in the $t$-component, we can assume that 
	\begin{align}
		f_j \mbox{ has compact support in } 10J_0 \quad \mbox{and} \quad c_j = 0 \quad \mbox{ for all } j \in \N. 
	\end{align}
	By Arzel\`a-Ascoli and the compactness of $2J_0$, we may suppose (by taking a sub-sequence, if necessary) that there is a parabolic Lipschitz map $f \colon \RR^n \to \RR^{n+1}$ and points $(x,t), (y,s) \in 2J_0$ such that $f_j \to f$ uniformly on $\RR^n$, $(x_j,t_j) \to (x,t)$ and $(y_j,s_j) \to (y,s)$. It is easy to check that 
	\begin{align}
		f(z,\sigma) = (\psi(z,\sigma) , t) \mbox{ for all } (z,\sigma) \in \RR^n,
	\end{align}
	for some $\psi \colon \RR^n \to \RR^{n}$.

	Using \eqref{e:omega-sequence} and the upper semi-continuity of $\mathcal{H}^{n+1}_{par,\infty}$ along sequences of compact subsets of a compact metric space (with respect to Hausdorff convergence) \cite{mattila1997measure}, it is not difficult to show that  
	\begin{align}\label{e:f-omega}
		\gamma_\psi(10J_0) = 0 \quad \mbox{ and } \quad \mathcal{H}^{n+1}_{par,\infty}(f(J_0)) > \varepsilon \mathcal{H}_{par}^{n+1}(J_0). 
	\end{align}
	Moreover, it follows from \eqref{e:fails-Lip-sequence} that 
	\begin{align}\label{e:f-squish}
		\dist(f(x,t) , f(y,s) ) = 0 \quad \mbox{ and } \quad \dist((x,t),(y,s)) \geq b \diam(J_0). 
	\end{align}
	
		Our goal now is to show that the first part of \eqref{e:f-omega} and \eqref{e:f-squish} imply 
	\begin{align}
		\mathcal{H}^{n+1}_{par,\infty}(f(J_0)) = 0, 
	\end{align}
	thus obtaining a contradiction to the second part of \eqref{e:f-omega}. Let $J_{0,X}$ denote the projection of $J_0$ on to the spatial components and $J_{0,t}$ denotes the projection of $J_0$ onto the $t$-component. Since $f$ fixes the $t$-component and (the first property of \eqref{e:f-omega} implies) $f(J_0)$ is a subset of some $t$-independent plane, it suffices to show (see \cite[Remark 2.8]{BHHLN-corona}) that 
	\begin{align}\label{e:time-slice}
		\int_{\RR^n} \mathbbm{1}_{f(J_0)}(X,s) \, d\mathcal{H}^{n-1}(X) = \mathcal{H}^{n-1}(f(J_0) \times \{s\}) = 0 \quad \mbox{ for all } s \in J_{0,t}.
	\end{align}
	
	Note to begin with that, since $f$ fixes the $t$-component, \eqref{e:f-squish} implies 
	\begin{align}\label{e:psi-squish}
		 (x,t) = (y,t), \quad |\psi(x,t) - \psi(y,t)| = 0 \quad \mbox{ and } \quad |x-y| \geq b \diam(J_0). 
	\end{align}
	The first part of \eqref{e:f-omega} provides an affine map $A \colon \RR^{n-1} \to \RR^n$ such that $\psi(z,\sigma) = A(z)$ for all $(z,\sigma) \in 10J_0$. By \eqref{e:psi-squish}, this map satisfies 
	\begin{align}
		|A(x) - A(y)| = 0 \quad \mbox{ and } \quad |x-y| \geq b \diam(J_0).  
	\end{align}
	In particular, by the area formula, 
	\begin{align}
	 \mathcal{H}^{n-1}(f(J_0) \times \{s\}) = \mathcal{H}^{n-1}( f(J_{0,X} \times \{s\})) = 	\mathcal{H}^{n-1}(A(10J_{0,X}) \times \{s\}) = 0
	\end{align}
	for all $s \in J_{0,t}$. This finishes the proof of \eqref{e:time-slice} and the lemma. 
\end{proof}

We will now prove Theorem \ref{t:lip-bi-lip}. Fix $L \geq 1$, $\varepsilon > 0$, and let $\psi$ and $I_0$ be as in the statement of Theorem \ref{t:lip-bi-lip}. Assume without loss of generality that $c = 0$ (the constant in the definition of regular parabolic Lipschitz mappings). Furthermore, let $\alpha,\tau > 0$ be the constants obtained by applying Lemma \ref{l:coarse-bi-lip} with constants $L$ and $\varepsilon$. 

We start by defining the bad set, that is, the set whose image has small Hausdorff content. For this, we will need to introduce two families of cubes; those with relatively small image and those where $\psi$ fails to be approximately affine. Indeed, let 
\begin{align}
	\mathcal{B}_1 &\coloneqq \left\{I \subseteq I_0 \colon \mathcal{H}_{par,\infty}^{n+1}(\psi(I)) < (\varepsilon/2) \mathcal{H}_{par}^{n+1}(I)\right\}; \\
	\mathcal{B}_2 &\coloneqq \left\{ I \subseteq I_0 \colon \gamma_{\psi}(3I) > \alpha \right\}. 
\end{align}
We define the bad set as the those points which are contained in at least one cube from $\mathcal{B}_1$ or at least $N$ dilated cubes from $\mathcal{B}_2$, where $N \in \N$ is some number to be chosen sufficiently large. More precisely, 
\begin{align}
	B \coloneqq R_1 \cup R_2 \coloneqq  \left(\bigcup_{I \in \mathcal{B}_1} I \right) \cup \left\{(x,t) \in I_0 \colon \sum_{I \in \mathcal{B}_2} \mathbbm{1}_{10I}(x,t)  \geq N \right\}. 
\end{align}

Now that $B$ is defined, we will check that its image under $\psi$ has small content.

\begin{lemma}
	For $N = N(\alpha, \varepsilon , L,M)$ large enough, the estimate \eqref{e:B-small-image} holds.
\end{lemma}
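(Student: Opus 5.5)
We need to show $\mathcal{H}^{n+1}_{par,\infty}(\psi(R_1)) + \mathcal{H}^{n+1}_{par,\infty}(\psi(R_2)) < \varepsilon \mathcal{H}^{n+1}_{par}(I_0)$ for $N$ large. The two pieces are handled separately. For $R_1$, pass to the maximal cubes $\{I_k\}$ of $\mathcal{B}_1$. These are disjoint subcubes of $I_0$ with union $R_1$, so by countable subadditivity of the content
\[
\mathcal{H}^{n+1}_{par,\infty}(\psi(R_1)) \le \sum_k \mathcal{H}^{n+1}_{par,\infty}(\psi(I_k)) < \frac{\varepsilon}{2}\sum_k \mathcal{H}^{n+1}_{par}(I_k) \le \frac{\varepsilon}{2}\mathcal{H}^{n+1}_{par}(I_0).
\]
The inequality here is strict, or else $R_1=\emptyset$ and there is nothing to prove. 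This leaves a margin of $\varepsilon/2$ for $R_2$.

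For $R_2$, we do not use the image at all. We bound $\mathcal{H}^{n+1}_{par,\infty}(\psi(R_2)) \lesssim L^{n+1}\mathcal{H}^{n+1}_{par,\infty}(R_2) \lesssim L^{n+1}|R_2|$, using that $\psi$ is $L$-Lipschitz and that on $n$-dimensional space time the parabolic content is comparable to Lebesgue measure. Then Chebyshev gives
\[
|R_2| \le \frac{1}{N}\int_{I_0}\sum_{I\in\mathcal{B}_2}\mathbbm{1}_{10I} \le \frac{C_n}{N}\sum_{I\in\mathcal{B}_2}|I| \le \frac{C_n}{N\alpha^2}\sum_{I\subseteq I_0}\gamma_\psi(3I)^2\ell(I)^{n+1}.
\]
By the discretized geometric lemma for regular Lip(1,1/2) functions, applied componentwise to $\psi$ (the $\gamma$ number of a vector map is controlled by the sum over components, each $(L,M)$-regular), the last sum is at most $C_n M'\ell(I_0)^{n+1}$. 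Note that $\gamma_\psi(3I)$ versus $\gamma_\psi(\Xbf_I,3\ell(I))$ differ only by fixed dilation constants, which is harmless. Hence $\mathcal{H}^{n+1}_{par,\infty}(\psi(R_2)) \le C L^{n+1} M/(N\alpha^2)\,\mathcal{H}^{n+1}_{par}(I_0)$, and we choose $N$ so that this is below $\varepsilon/2$. Since $\alpha$ depends only on $L$ and $\varepsilon$, $N$ depends on $(\alpha,\varepsilon,L,M)$ as claimed.

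The only mildly delicate point is the $R_2$ step, specifically the comparison between the parabolic content and Lebesgue measure in the domain, together with the Lipschitz pushforward bound for content. Covering sets of diameter $d$ map to sets of diameter at most $Ld$, so content scales by $L^{n+1}$. Also, $\mathcal{H}^{n+1}_{par,\infty}(E)\lesssim |E|$ for $E\subseteq\RR^n$, by covering an open set containing $E$ with dyadic parabolic cubes of total volume close to $|E|$. Both are standard, so no real obstacle is expected. The only step deserving care is the Carleson sum, which must be taken over cubes contained in $I_0$ only, matching the geometric lemma with $Q_0=I_0$.
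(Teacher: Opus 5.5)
Your proof is correct and follows the same route as the paper. The paper controls $R_1$ directly from the definition of $\mathcal{B}_1$; your passage to maximal cubes makes that step explicit. It controls $R_2$ via Chebyshev, the Carleson packing of the $\gamma$-numbers over cubes in $I_0$, and the Lipschitz bound on content, exactly as you do. Your factor $L^{n+1}$ for pushing content forward under $\psi$ is the correct one, whereas the paper writes $\varepsilon/2L$; this only changes how $N$ depends on $L$.
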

	
\begin{proof}
	First, it is clear from the definition of $\mathcal{B}_1$ that 
\begin{align}\label{e:psi-R_1}
	\mathcal{H}_{par,\infty}^{n+1}\left( \psi\left( R_1 \right) \right) < (\varepsilon/2) \mathcal{H}_{par}^{n+1}(I_0). 
\end{align}
Second, it follows from Chebyshev's inequality, the definition of regular Lip(1,1/2) functions and the definition of $\mathcal{B}_2$ that, for $N = N(\alpha,\varepsilon,L,M)$ large enough,
\begin{align}
	\mathcal{H}^{n+1}_{par,\infty}(R_2) \leq \frac{C}{N} \sum_{I \in \mathcal{B}_2} \ell(I)^{n+1}  \leq \frac{C}{N \alpha^2 } \sum_{I \in \mathcal{B}_2} \gamma_\psi(3I)^2\ell(I)^{n+1} \leq (\varepsilon/2L) \mathcal{H}^{n+1}_{par}(I_0). 
\end{align}
In particular, since $\psi$ is $L$-Lipschitz, this gives $\mathcal{H}^{n+1}_{par,\infty}(\psi(R_2)) < (\varepsilon/2) \mathcal{H}^{n+1}_{par}(I_0)$. This estimate and \eqref{e:psi-R_1} prove the lemma. 
\end{proof}

To obtain bi-Lipschitz estimates on the complement of $B$, we need weak bi-Lipschitz estimates on a collection of \textit{good cubes} $\mathcal{G}$, defined as  
\begin{align}
	\mathcal{G} \coloneqq \Delta \setminus \left( \mathcal{B}_1 \cup \mathcal{B}_2 \right). 
\end{align}
The following is a direct corollary of Lemma \ref{l:coarse-bi-lip} and our choice of $\alpha$ and $\tau$ in terms of $L$ and $\varepsilon$. 

\begin{lemma}\label{l:good-bi-lip}
	If $I \in \mathcal{G}$ then 
	\begin{align}
		\dist( \psi(x,t)  , \psi(y,s) ) \geq \tau  \dist((x,t) , (y,s)) 
	\end{align}
	for all $(x,t), (y,s) \in 2I$ such that $\dist((x,t) , (y,s)) > b \diam(I)$. 
\end{lemma}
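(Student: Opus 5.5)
The lemma should follow by checking that a good cube $I$ satisfies the hypotheses of Lemma \ref{l:coarse-bi-lip}, for the same $L$ and $\varepsilon$ (or $\varepsilon/2$) that fixed $\alpha$ and $\tau$. The conclusion is then exactly that lemma's conclusion, read with the full map $\psi \colon \RR^n \to \RR^{n+1}$ in the role of $f$.

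\textbf{Setting up the map.} Since $\psi$ is an $(L,M)$-regular parabolic Lipschitz map with $c = 0$, we can write $\psi(x,t) = (\tilde\psi(x,t), t)$, where $\tilde\psi = (\psi_1,\dots,\psi_n)$. Thus $\psi$ is parabolic $L$-Lipschitz and has exactly the form required of $f$ in Lemma \ref{l:coarse-bi-lip}.

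\textbf{The large-image hypothesis.} Because $I \notin \mathcal{B}_1$, we have $\mathcal{H}^{n+1}_{par,\infty}(\psi(I)) \ge (\varepsilon/2)\mathcal{H}^{n+1}_{par}(I)$. I would fix this threshold once, by applying Lemma \ref{l:coarse-bi-lip} with $\varepsilon/4$ in place of $\varepsilon$. The resulting strict inequality $> (\varepsilon/4)\mathcal{H}^{n+1}_{par}(I)$ is then automatic. Shrinking $\varepsilon$ by a fixed factor changes only the constants $\alpha$ and $\tau$, which depend on $L$ and $\varepsilon$ alone, so nothing else in the construction is affected.

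\textbf{The flatness hypothesis.} Lemma \ref{l:coarse-bi-lip} asks for $\gamma_{\tilde\psi}(10I) < \alpha$, whereas $I \notin \mathcal{B}_2$ only gives $\gamma_\psi(3I) \le \alpha$. Here $\gamma_{\tilde\psi}$ is the vector-valued $\gamma$-number; it differs from $\gamma_\psi$ only in whether the time coordinate is included. The time coordinate $t$ is not a spatial affine function, but it is common to $f$ and $\psi$, and the compactness argument only uses flatness of $\tilde\psi$. This mismatch in dilation and in the precise quantity is the main obstacle. I see two ways to resolve it.

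\begin{itemize}
\item[(a)] Reread the proof of Lemma \ref{l:coarse-bi-lip} with $3I$ in place of $10I$. All the relevant points lie in $2I \subset 3I$, and the limit map only needs to be spatially affine on a neighbourhood of $2I$ (indeed on $J_0$) to run the area-formula argument. So the compactness proof goes through verbatim with the hypothesis $\gamma_{\tilde\psi}(3I) < \alpha$.
\item[(b)] Alternatively, define $\mathcal{B}_2$ with $\gamma_\psi(10I) > \alpha$. This only changes constants in the Carleson sum, because $\gamma(10I) \lesssim \gamma(3\widehat I)$ for a bounded-generation ancestor $\widehat I$ of $I$, and so it does not affect the estimate \eqref{e:B-small-image}.
\end{itemize}

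Either route yields $\gamma_{\tilde\psi} < \alpha$ at the scale the lemma requires (the strict versus non-strict inequality is absorbed by shrinking $\alpha$ slightly). I would take route (a), since the proof already reduces to a fixed cube $J_0$ by scaling.

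\textbf{Conclusion.} With both hypotheses verified, Lemma \ref{l:coarse-bi-lip} gives $\dist(\psi(x,t),\psi(y,s)) \ge \tau \dist((x,t),(y,s))$ for all $(x,t),(y,s) \in 2I$ with separation at least $b\diam(I)$. This is the claimed estimate.
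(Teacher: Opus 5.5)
Your proposal is correct and follows the paper, which records this lemma as a direct corollary of Lemma \ref{l:coarse-bi-lip}. Your adjustments (using $\varepsilon/4$ in that lemma, and running its compactness argument on $3I$ rather than $10I$, or equivalently redefining $\mathcal{B}_2$) correctly fix the threshold and dilation mismatches that the paper's one-line justification glosses over; like the paper, you implicitly take $I \subseteq I_0$, since $\mathcal{B}_1$ and $\mathcal{B}_2$ contain only subcubes of $I_0$, and this is all the coding argument needs.
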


By Lemma \ref{l:b} and Lemma \ref{l:good-bi-lip}, it is enough to construct sets $F_1,\dots,F_K$ which satisfy the following property: for each $x,y \in F_j$, if $I$ is the smallest parabolic cube containing $x$ such that $y \in 2I$, then $I \in \mathcal{G}$. The construction of such sets follows from a (now standard) coding argument introduced in \cite{jones1988lipschitz}. Since this argument only relies on the structure of dyadic cubes (i.e., is independent of the parabolic structure), we shall not include it here. We direct the reader to \cite{jones1988lipschitz} or \cite{david1993quantitative} for the details. 

\subsection{The proof of Theorem \ref{t:BPRPLI-implies-UR}} With the bi-Lipschitz decomposition in hand, we can commence with the proof of Theorem \ref{t:BPRPLI-implies-UR} more directly. Our first goal is to show that any set which satisfies the hypotheses of that theorem has big pieces of regular parabolic \textit{bi-Lipschitz} images of $n$-dimensional space in some larger space-time $\RR^{m+1}$ (see Lemma \ref{l:lip-to-bi-lip}). Before getting to this, we prove the following useful auxiliary result, which gives us a way to stretch the map in the places where it may fail the lower Lipschitz estimate. 

\begin{lemma}\label{l:bubbles}
	Let $I \in \Delta$. There is a mapping $\phi \colon \RR^{n-1} \times \RR \to \RR^{n}$ (whose image we equip with the Euclidean distance) such that the following conditions hold. 
	\begin{enumerate}
		\item \label{l:bubbles-derivatives} The map $\phi$ is supported in $3I$, satisfies $|\phi| \lesssim \ell(I)$, and each component $\phi_i$ of $\phi$ has parabolic derivative bounds 
		\begin{align*}
			|\nabla_x^\alpha \partial_t^m \phi_i| \leq C_{\alpha,m} \ell(I)^{-|\alpha|-2m+1}. 
		\end{align*}
		\item \label{l:bubbles-lower-lip} For each $(x,t),(y,s) \in 2I$, 
		\begin{align*}
			| \phi(x,t) - \phi(y,s) | \gtrsim |x-y| .
		\end{align*}
		\item \label{l:bubbles-small} For each $(x,t) \in (2I)^c$, 
		\begin{align*}
			| \phi(x,t) | \leq \ell(I)/2 .
		\end{align*}
		\item \label{l:bubbles-large} For each $(x,t) \in I$, 
		\begin{align*}
			| \phi(x,t) | \geq \ell(I). 
		\end{align*}
	\end{enumerate}
\end{lemma}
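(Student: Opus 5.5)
The plan is to write $\phi$ down explicitly as a product of smooth parabolic cutoffs with affine functions. We make the $n-1$ spatial components a small multiple of the identity in $x$, which gives the lower Lipschitz bound, and use one extra component as a bump, which makes $|\phi|$ large on $I$. Let $x_I$ be the spatial centre of $I$ and $r=\ell(I)$. Fix a smooth parabolic cutoff $\chi$ with $0\le\chi\le 1$, $\chi\equiv 1$ on $2I$, $\supp\chi\subseteq 3I$ and $|\nabla_x^\alpha\partial_t^m\chi|\le C_{\alpha,m}r^{-|\alpha|-2m}$. Fix a second cutoff $\eta$ with $\eta\equiv1$ on $I$, $\supp\eta\subseteq \tfrac32 I\subseteq 2I$ and the same derivative bounds. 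Both are obtained, for instance, by rescaling a fixed bump $\Phi(x/r,t/r^2)$ with respect to the parabolic dilation. For a small dimensional constant $\epsilon>0$ to be fixed, define
\[
\phi(x,t) \coloneqq \bigl(\epsilon\,\chi(x,t)\,(x-x_I),\ r\,\eta(x,t)\bigr)\in\RR^{n-1}\times\RR=\RR^n .
\]

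Next we verify the four properties in order. For \eqref{l:bubbles-derivatives}, $\phi$ is supported in $3I$ because both cutoffs are. On $3I$ we have $|x-x_I|\lesssim r$, so $|\phi|\lesssim r$. The derivative bounds follow from the Leibniz rule: a spatial derivative falling on $x-x_I$ produces a factor $1$ instead of $r^{-1}$, and the affine factor is independent of $t$. Hence every term is bounded by $C_{\alpha,m}r^{-|\alpha|-2m+1}$.

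For \eqref{l:bubbles-lower-lip}, on $2I$ we have $\chi\equiv1$, so the first $n-1$ components equal $\epsilon(x-x_I)$. Therefore $|\phi(x,t)-\phi(y,s)|\ge\epsilon|x-y|$, which is $\gtrsim|x-y|$ since $\epsilon$ is dimensional.

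For \eqref{l:bubbles-small}, a point outside $2I$ has $\eta=0$, and the first block has size at most $\epsilon\sup_{3I}|x-x_I|\le C_n\epsilon r$. Choosing $\epsilon=1/(2C_n)$ gives $|\phi|\le r/2$.

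For \eqref{l:bubbles-large}, on $I$ we have $\eta\equiv1$, so the last component alone gives $|\phi|\ge r$.

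There is no real obstacle here. The only point requiring care is the apparent tension between \eqref{l:bubbles-lower-lip} and \eqref{l:bubbles-small}: an injective-in-$x$ map on $2I$ naturally has size $\sim r$ near $\partial(2I)$. This is resolved by shrinking the identity part by $\epsilon$, which costs only a dimensional constant in \eqref{l:bubbles-lower-lip}. The other point is to put the support of $\eta$ strictly inside $2I$, so that the bump does not interfere with \eqref{l:bubbles-small}.
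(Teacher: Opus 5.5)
Your proposal is correct and is essentially the paper's construction: the paper also sets $\phi=(x\,\eta_1,\eta_2)$, where $\eta_1$ is a small constant on $2I$ supported in $3I$, and $\eta_2$ equals $\ell(I)$ on $I$ and is small off $2I$. Your dimension-dependent choice of $\epsilon$ is actually slightly more careful than the paper's fixed constant $1/8$: on $3I$ one only has $|x-x_I|\le \tfrac32\sqrt{n-1}\,\ell(I)$, so the paper's bound $|x\,\eta_1|\le \ell(I)/4$ holds as written only for $n=2$.
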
 

\begin{proof}	
	By translating, we can assume that $I$ is centred at the origin. Let $\eta_i \in C_0^\infty(\RR^n)$, $i \in \{1,2\}$, be smooth bump functions with support in $3I$ such that 
	\begin{align}
		0 \leq \eta_1 \leq 1/8 \quad \mbox{and} \quad  \eta_1 \equiv 1/8 \mbox{ on } 2I, 
	\end{align} 
	and 
	\begin{align}
		0 \leq \eta_2 \leq \ell(I), \quad  \eta_2 \equiv \ell(I) \mbox{ on } I \quad \mbox{and} \quad \eta_2 \leq \ell(I)/8 \mbox{ on } (2I)^c. 
	\end{align}
	Furthermore, assume the parabolic derivative bounds 
	\begin{align}\label{e:derivative-bounds}
		| \nabla^\alpha \partial_t^m \eta_1 | \lesssim_{\alpha,m} \ell(I)^{-|\alpha|-2m} \quad \mbox{ and } \quad | \nabla^\alpha \partial_t^m \eta_2 | \leq_{\alpha,m} \ell(I)^{-|\alpha| -2m +1} . 
	\end{align}
	We define the map $\phi$ by 
	\begin{align}
		\phi(x,t) \coloneqq \left(x \eta_1(x,t) , \eta_2(x,t) \right) . 
	\end{align}
	
	We now check the conditions of the lemma. First, if $i = n$ then (1) follows directly from \eqref{e:derivative-bounds}. If $i \in \{1,\dots,n-1\}$ then (1) follows from the chain rule, \eqref{e:derivative-bounds} and the fact that $|x| \lesssim \ell(I)$. Item (2) holds by the fact that the first component of $\phi$ is $x \eta_1(x,t) \equiv x/8$ on $2I$. Item (3) holds by the fact that $|x \eta_1(x,t)| \leq \ell(I)/4$ on $3I$, $\supp(\eta_1) \subseteq 3I$ and $\eta_2 \leq \ell(I)/8$ on $(2I)^c$. Finally, (4) follows from the fact that $\eta_2 \equiv \ell(I)$ on $I$. 
\end{proof}

\begin{lemma}\label{l:lip-to-bi-lip}
	Suppose $\Sigma \subseteq \RR^{n+1}$ is parabolic Ahlfors-David regular and has big pieces of regular parabolic Lipschitz images of $n$-dimensional space. There is $m \in \N$, depending only on $n$, such that $\Sigma$ has big pieces of regular parabolic bi-Lipschitz images of $n$-dimensional space in $\RR^{m+1}$. 
\end{lemma}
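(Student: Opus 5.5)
Fix $Q\in\dd(\Sigma)$ (or $(X,t)\in\Sigma$, $r>0$) and let $F=F_{X,t,r}=(\psi,c+t)$ be the $(L,M)$-regular parabolic Lipschitz map given by BPRPLI, so that $\sigma(F(B_n(\mathbf 0,r))\cap B((X,t),r))\ge\theta r^d$. Take a cube $I_0\in\Delta$ with $B_n(\mathbf 0,r)\subseteq I_0$ and $\ell(I_0)\approx r$. Apply Theorem \ref{t:lip-bi-lip} to $F$ on $I_0$ with $\varepsilon$ a small multiple of $\theta$; this is legitimate because Lemma \ref{l:coarse-bi-lip} only uses the spatial part and the time translation. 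We get $I_0=B\cup F_1\cup\dots\cup F_K$ with $\mathcal H^{n+1}_{par,\infty}(F(B))<\varepsilon r^{n+1}$. Since $\sigma$ restricted to $F(B)\cap\Sigma$ is controlled by content through ADR, some $F_j$ satisfies $\sigma(F(F_j)\cap B((X,t),r))\gtrsim\theta r^d/K$. The set $A:=F_j$ is compact and $F|_A$ is bi-Lipschitz with constants $(\tau,L)$. It then suffices to build an $(L',M')$-regular parabolic bi-Lipschitz map $G:\RR^n\to\RR^{m+1}$ with $G|_A=(F|_A,0)$, with $L',M'$ and $m$ depending only on the data and $n$.

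For the construction, let $\{I\}$ be a Whitney decomposition of $\RR^n\setminus A$ into cubes in $\Delta$, with $\ell(I)\approx\dist(I,A)$. Write $\phi_I$ for the map of Lemma \ref{l:bubbles}, taking values in $\RR^n$. Split the Whitney cubes into $N=N(n)$ subfamilies $\mathcal W_1,\dots,\mathcal W_N$ so that cubes within one family satisfy the separation condition \eqref{e:separation-gluing-prime}. This is possible by bounded overlap, colouring at each scale and also across nearby scales; one can use more colours and a sparser scale sampling, e.g. scales congruent mod a large integer. Then define
\[G(x,t)=\Big(\psi(x,t),\ \sum_{I\in\mathcal W_1}\phi_I(x,t),\ \dots,\ \sum_{I\in\mathcal W_N}\phi_I(x,t),\ c+t\Big),\]
so $m=n+nN$. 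On $A$ every $\phi_I$ vanishes, since $3I\cap A=\emptyset$ for a suitable Whitney constant, and so $G|_A=(F,0)$. Regularity follows coordinatewise from Lemma \ref{l:gluing-for-regular-functions-prime} under (H2), with $\theta_i\equiv\ell(I_i)$ (a constant, hence regular Lipschitz with $K\approx1$) and $\eta_i=\phi_{I,k}/\ell(I)$, which obeys the required derivative bounds by Lemma \ref{l:bubbles}(1). The separation in \eqref{e:separation-gluing-prime} is exactly what the colouring provides. The $\psi$ coordinates are already $(L,M)$-regular, and the time coordinate is a translation. So $G$ is regular parabolic Lipschitz.

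The main obstacle is the lower bound $\dist(G(p),G(q))\gtrsim\dist(p,q)$, which needs a case analysis.
\begin{itemize}
\item If $p,q\in A$, use the bi-Lipschitz bound for $F|_A$.
\item If $p,q$ lie in a common $2I$, use Lemma \ref{l:bubbles}(2) for the spatial part and the time coordinate for $|t-s|^{1/2}$.
\item If $p\in I$ and $q\notin 2I$ with $\ell(I)\gtrsim\dist(p,q)$, then $I$ and the cubes near $q$ lie in the same colour only if they are separated. Lemma \ref{l:bubbles}(3),(4) then give a jump of size at least $\ell(I)/2$ in the $\mathcal W_k$ coordinate.
\item If $\dist(p,q)\gg\ell(I_p)+\ell(I_q)$, where $I_p,I_q$ are the Whitney cubes containing $p,q$ (or zero if the point is in $A$), project to nearest points $p',q'\in A$. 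These satisfy $|p-p'|\lesssim\ell(I_p)$ and $\dist(p',q')\approx\dist(p,q)$. The bi-Lipschitz bound on $A$ and the Lipschitz bound of $G$ give $\dist(G(p),G(q))\ge\tau\dist(p',q')-CL'(\ell(I_p)+\ell(I_q))$.
\end{itemize}
The last case needs $\ell(I_p)+\ell(I_q)\le c\,\dist(p,q)$ with $c$ small relative to $\tau/L'$. This is the step I expect to be delicate, since $L'$ itself depends on the bubbles. I would resolve it by shrinking the Whitney ratio, i.e. taking $\ell(I)\le\delta\dist(I,A)$. This makes the threshold between the intermediate and far cases depend on $\delta$, and in the intermediate regime the relevant cube sizes remain comparable up to $\delta$-dependent constants. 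That is acceptable, since only the final constants change. Finally, $\sigma(G(\RR^n)\cap\ldots)$ is not literally the needed conclusion in $\RR^{m+1}$. Instead one records that $\Sigma\cap G(A)$, identified via $\RR^{n+1}\hookrightarrow\RR^{m+1}$, is the big piece.
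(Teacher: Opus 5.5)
There is a genuine gap in the lower bi-Lipschitz bound, in the range between your third and fourth cases.

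\textbf{Where your cases stop.} Your colouring only enforces \eqref{e:separation-gluing-prime}. So the third case covers $\dist(p,q)$ only up to a fixed multiple (below $10^5$) of $\ell(I_p)$. At larger distances, $q$ may lie in $3J$ for another cube $J$ of the colour $k$ of $I_p$. The $\mathcal W_k$-coordinate at $q$ is then $\phi_J(q)$, which can equal $\phi_{I_p}(p)$, for instance when $J$ is a translate of $I_p$ and $q$ the corresponding translate of $p$.

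The fourth case needs $\tau\dist(p',q')\ge 2L(\dist(p,A)+\dist(q,A))$, i.e.\ $\dist(p,q)\gtrsim (L/\tau)(\dist(p,A)+\dist(q,A))$. There you should use only the $F$-coordinates of $G$, whose Lipschitz constant is $L$; this removes your worry about $L'$. Since $\dist(p,A)\approx \ell(I_p)/\delta$, shrinking the Whitney ratio makes this threshold larger in units of $\ell(I_p)$. It widens the gap rather than closing it.

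Now take $p,q$ at comparable distance $d$ from $A$ with $10^5\ell(I_p)\ll\dist(p,q)\ll (L/\tau)d$. The $\psi$-coordinate gives nothing, because the error $Ld$ swamps $\tau\dist(p',q')$. The bubbles give nothing either, for the reason above. Your claim that cube sizes stay comparable in the intermediate regime is also false: pairs with $q\in A$, or with $\dist(q,A)\ll\dist(p,A)$ and $\dist(p,q)\approx\dist(p,A)$, fall there.

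\textbf{How the paper closes it.} It uses two ingredients you lack.
\begin{enumerate}
\item Same-coloured Whitney cubes are separated by a factor proportional to $L\tau^{-1}$; see \eqref{e:separation-bi-lip}. With your Whitney ratio you need a factor $\gg (L/\tau)\delta^{-1}$. Then, for cubes of comparable size, $q$ is close to $I_p$ relative to that separation, so no other bubble of colour $k$ is active at $p$ or $q$. Lemma \ref{l:bubbles}(2)--(4) then give the bound; this is the paper's Case 3.
\item Close pairs with very different distances to $A$ are handled by an extra coordinate $g_A$, a regular Lip(1,1/2) function comparable to $\dist(\cdot,A)$; this is the paper's Case 2. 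In your setup the bubble of the larger cube would also work here, since any same-coloured bubble active at $q$ then has size $\ll\ell(I_p)$.
\end{enumerate}

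\textbf{Consequences of the fix.} The large separation can only be imposed among cubes of comparable size. Whitney cubes of very different sizes accumulating at one point of $A$ are at most about $\delta^{-1}$ times the larger side apart. Across scales you keep only \eqref{e:separation-gluing-prime}, which requires $\delta$ small. With this construction the number of colours, hence $m$, then depends on $L/\tau$ and not only on $n$. That is harmless for Theorem \ref{t:BPRPLI-implies-UR}, since Lemma \ref{l:dimension-reduction} works for any $m$.
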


\begin{proof}
	Let $L,M ,\theta > 0$ be the constants for which $\Sigma$ satisfies the BPRPLI. Fix $\Xbf \in \Sigma$, $R > 0$ and let $f \colon \RR^n \to \RR^{n+1}$ be the $(L,M)$-regular parabolic Lipschitz map such that 
	\begin{align}
		\sigma\left( \Sigma \cap B(\Xbf,R) \cap f(B_n(\mathbf{0},R))\right) \geq \theta R^{n+1},
	\end{align}
	where $B_n(\mathbf{0},R)$ denotes the parabolic ball in $\RR^n$ centered at the origin with radius $R$. By ADR, this estimate implies 
	\begin{align}\label{e:image-large-content}
		\mathcal{H}_{par,\infty}^{n+1}(\Sigma \cap B(\Xbf,R) \cap f(B_n(\mathbf{0},R))) \geq \varepsilon \mathcal{H}^{n+1}_{par}(B_n(\mathbf{0},R))
	\end{align}
	for some $\varepsilon$ depending on the $\theta$ and the ADR constant. By Theorem \ref{t:lip-bi-lip}, there are constants $N \geq 1$ and $\tau > 0$ (depending on $L,M$ and $\varepsilon$) and a decomposition 
	\begin{align}\label{e:decomposition}
		B_n(\mathbf{0},R) = B \cup F_1 \cup \dots \cup F_N 
	\end{align}
	such that 
	\begin{align}\label{e:image-small-content}
		\mathcal{H}_{par,\infty}^{n+1}(f(B)) < (\varepsilon/2) \mathcal{H}_{par}^{n+1}(B_n(\mathbf{0},R))
	\end{align}
	and such that $f|_{F_j}$ is bi-Lipschitz with upper constant $L$ and lower constant $\tau$ for each $j \in \{1,\dots,N\}$. It follows from \eqref{e:image-large-content}, \eqref{e:decomposition} and \eqref{e:image-small-content} (by pigeonholing) that there exists some $j$ such that 
	\begin{align}\label{e:big-piece-F_j}
		\mathcal{H}_{par,\infty}^{n+1}(\Sigma \cap B(\Xbf,R) \cap f(F_j)) \geq (\varepsilon/2N)\mathcal{H}^{n+1}_{par}(B_n(\mathbf{0},R)) \gtrsim \varepsilon R^{n+1} . 
	\end{align}
	
	Our goal now is to construct a regular parabolic bi-Lipschitz map $g \colon \RR^{n} \to \RR^{m+1}$ (for some $m \in \N$) whose restriction to $F_j$ agrees with $f$. By \eqref{e:big-piece-F_j}, this is sufficient to conclude the lemma. For brevity, we will denote 
	\begin{align}
		A \coloneqq F_j .
	\end{align}
	We will need a regularized (i.e., regular parabolic Lipschitz) distance function to $A$. By \cite[Lemma 3.24]{BHHLN-CME}, there are constants $C_1 \geq 1$ and $L_1,M_1 \geq 0$, depending on dimension, and a regular Lip(1,1/2) function $g_A$ such that 
	\begin{align}
		C_1^{-1} \dist((x,t),A) \leq g_A(x,t) \leq C_1 \dist((x,t),A) \quad \mbox{ for all } (x,t) \in \RR^n. 
	\end{align} 
	We also need a Whitney decomposition. Let $\mathcal{I}$ be a Whitney decomposition (by parabolic cubes) of $\RR^n \setminus A$ such that 
	\begin{align}
		 \diam(I) \leq 10^{-3}\dist(I,A) \leq 3\diam(I)  \quad \mbox{ for all }I \in \mathcal{I}
	\end{align}
	and decompose $\mathcal{I}$ into $K \in \N$ sub-families $\{\mathcal{I}_k\}_{k=1}^K$ such that 
	\begin{align}\label{e:separation-bi-lip}
		d_\infty(I, J) \ge 10^6L\tau^{-1}\max\{\diam_\infty(I), \diam_\infty(J)\}, 
	\end{align}
	for all $k \in \{1,\dots,K\}$ and $I \neq J \in \mathcal{I}_k$, 
	where $\diam_\infty$ is the diameter measured in $d_\infty$.
	This can be done in such a way that $K$ depends only on dimension.
	Then, we define $g \colon \RR^n \to \RR^{n+1} \times \RR \times (\RR^{n})^K$ by
	\begin{align*}
		g_{-1} &= f & &\mbox{ if $g_{-1}$ is the $\RR^{n+1}$ coordinate of $g$;} \\
		g_0 &= g_A & &\mbox{ if $g_0$ is the $\RR$ coordinate of $g$;} \\
		g_k &=  \sum_{I \in \mathcal{I}_k} \phi_I & &\mbox{ if $g_k$ is the $k^{th}$ copy of the $\RR^{n}$ coordinate of $g$.} 
	\end{align*}
Here $\phi_I$ is from Lemma \ref{l:bubbles}.	
	It is clear that $g|_A = f|_A$ since each of the mappings $g_k$, $k \in \{0,\dots,K\}$, are identically zero on $A$. It is also clear that $g$ is parabolic Lipschitz since it is the sum of finitely many parabolic Lipschitz mappings. Thus, to check that $g$ is bi-Lipschitz we need only prove the lower Lipschitz bound, i.e.,
	\begin{align}\label{e:lower-lip}
		\dist( g(x,t) ,  g(y,s) ) \gtrsim \dist((x,t) , (y,s) ) . 
	\end{align}
	
	Let $(x,t),(y,s) \in \RR^n$ and suppose to begin with that there exists $I,J \in \mathcal{I}$ such that $(x,t) \in I$ and $(y,s) \in J$. Assume without loss of generality that $\diam(I) \geq \diam(J)$. Under the above assumptions, the proof of \eqref{e:lower-lip} splits into three cases.

	\noindent\textbf{Case 1:} Suppose that $\dist(I,J) \geq 10^5L\tau^{-1} \diam(I)$. 
	
	Let $(v,\rho) \in A$ and $(w,\sigma) \in A$ be points which are closest to $(x,t)$ and $(y,s)$, respectively. In this way, we have 
	\begin{align*}
		\dist((x,t),(v,\rho)) \leq 3\diam(I)  \quad \mbox{ and } \quad \dist((y,s) , (w,\sigma))  \leq 3\diam(J) .
	\end{align*}
	The assumption of Case 1 tell us that 
	\begin{align*}
		\dist((x,t),(y,s)) \geq 10^5L\tau^{-1} \diam(I) \geq 10^5L\tau^{-1} \diam(J). 
	\end{align*}
	Thus, by the triangle inequality, 
	\begin{align*}
		\dist((v,\rho) , (w,\sigma)) \geq (10^5L\tau^{-1}/2)\diam(I) \geq (10^5L\tau^{-1}/2)\diam(J) . 
	\end{align*}
	Using the above estimates, along with the fact that $f$ is bi-Lipschitz on $A$ with constant $(L,\tau)$, it follows from the triangle inequality that 
	\begin{align*}
		\dist( f(x,t)  , f(y,s)) &\geq \dist( f(v,\rho) ,  f(w,\sigma) )  - 3L\diam(I) - 3L\diam(J) \\
		&\geq \tau \dist((v,\rho) ,(w,\sigma)) - 3L\diam(I) - 3L\diam(J) \\
		&\geq (\tau/2) \dist((v,\rho) ,(w,\sigma)) \geq (\tau/4) \dist((x,t),(y,s)). 
	\end{align*}
	This completes the proof of \eqref{e:lower-lip} in Case 1. 
	
	\noindent\textbf{Case 2:} Suppose that $\dist(I,J) < 10^5L\tau^{-1}\diam(I)$ and $\diam(J) \leq \diam(I)/(5C_1^2)$. 
	
	We note that the assumption $\dist(I,J) < 10^5L\tau^{-1}\diam(I)$ implies 
	\begin{align*}
		\dist((x,t),(y,s)) \lesssim \diam(I). 
	\end{align*}
	Furthermore, after recalling the estimates on $g_A$ and the definition of the Whitney cubes, the second assumption in Case 2 gives
	\begin{align*}
		g_A(x,t) \geq C_1^{-1}\dist((x,t),A)  \geq 10^3C_1^{-1}\diam(I)
	\end{align*}
	and 
	\begin{align*}
		g_A(y,s) \leq C_1 \dist((y,s),A) \leq 3\cdot 10^5 C_1\diam(J)   \leq (3/5)10^3C_1^{-1}\diam(I). 
	\end{align*}
	The above three estimates then imply 
	\begin{align*}
		\dist( g(x,t)  , g(y,s))  \geq |g_A(x,t) - g_A(y,s)| \gtrsim \diam(I) \gtrsim \dist((x,t),(y,s)),
	\end{align*}
	which finishes the proof of \eqref{e:lower-lip} in Case 2.

	\noindent\textbf{Case 3:} Suppose that $\dist(I,J) < 10^5L\tau^{-1}\diam(I)$ and $\diam(J) > \diam(I)/(5C_1^2)$. 
	
	Let $k \in \{1,\dots,K\}$ such that $I \in \mathcal{I}_k$. By the first assumption in Case 3 and the separation condition \eqref{e:separation-bi-lip} (and the fact that $\diam(J) \geq \diam(I)$), we know that $I \cap 3Q = J \cap 3Q = \emptyset$ for all $Q \in \mathcal{I}_k$ not equal to $I$. In particular, since each $\phi_Q$ is supported in $3Q$, it follows that 
	\begin{align}
		g_k(x,t) = \phi_I(x,t) \quad \mbox{ and } \quad g_k(y,s) = \phi_I(y,s) .
	\end{align}
	By definition, we always have $(x,t) \in I$. If $(y,s) \in 2I$, then we use Lemma \ref{l:bubbles} \eqref{l:bubbles-lower-lip} and the fact that $f_{n+1}$ (which is the $t$-component of the image of $g$) fixes the $t$ component to get 
	\begin{align*}
		\dist( g(x,t) , g(y,s) ) &\geq |\phi_I(x,t) - \phi_I(y,s)| + |f_{n+1}(x,t) - f_{n+1}(y,s)|^\frac{1}{2} \\
		&\gtrsim |x-y| + |t-s|^\frac{1}{2}, 
	\end{align*} 
	as required for \eqref{e:lower-lip}. If $(y,s) \not\in 2I$, then it follows from Lemma \ref{l:bubbles} \eqref{l:bubbles-small} and \eqref{l:bubbles-large} that 
	\begin{align}
		\dist( g(x,t)  , g(y,s) )  \geq |g_k(x,t) - g_k(y,s)| \gtrsim \diam(I). 
	\end{align}
	The first assumption in Case 3 implies $\dist((x,t),(y,s)) \lesssim \diam(I)$ (as in Case 1) so the above estimate is enough to conclude \eqref{e:lower-lip}. 
	
	 To complete the bi-Lipschitz estimates, there are (up to symmetry) two more cases to consider. First, if $(x,t),(y,s) \in A$ then there is nothing to show since $g|_A = f|_A$ and $f$ is bi-Lipschitz on $A$. Lastly, one has to deal with the situation that $(x,t) \in I$ for some $I \in \mathcal{I}$ and $(y,s) \in A$. This can be dealt with in the same way as Case 1 and Case 2, conditioning instead on whether $\dist((y,s),I)$ is larger or smaller than $10^5L\tau^{-1}\diam(I)$. We leave the details to the reader. 
	
	The only thing left to prove is that each spatial component of $g$ is a regular parabolic Lipschitz function. Let us fix such a component, which we will denote by $h$. Since each component of $g_{-1}$ and $g_0$ are regular parabolic Lipschitz (by assumption or construction), we may assume that 
	\begin{align}
		h = \sum_{I \in \mathcal{I}_k} \phi_{I,i} 
	\end{align}
	for some $k \in \{1,\dots,K\}$ and $i \in \{1,\dots,n\}$, where $\phi_{I,i}$ denotes the $i^{th}$ component of $\phi_I$. Writing
	\begin{align}
		\eta_I \coloneqq \ell(I)^{-1} \phi_{I,i} \quad \mbox{ and } \quad \theta_I \coloneqq \ell(I),
	\end{align}
	so that 
	\begin{align}
		h = \sum_{I \in \mathcal{I}_k} \eta_I \theta_I, 
	\end{align}
	it is clear from Lemma \ref{l:bubbles} \eqref{l:bubbles-derivatives} and \eqref{e:separation-bi-lip} that the functions $\{\eta_I\}$, $\{\theta_I\}$ and the cubes $\mathcal{I}_k$ satisfy the hypotheses of Lemma \ref{l:gluing-for-regular-functions-prime} (H2). We deduce immediately that $h$ is regular, which finishes the proof.  
	\end{proof}

Lemma \ref{l:lip-to-bi-lip} essentially allow us to reduce the proof of Theorem \ref{t:lip-bi-lip} to proving that bi-Lipschitz images of $n$-dimensional space-time in $\RR^{m+1}$ satisfy the geometric lemma (more shall be said about this later). For now, we prove this latter result. We start with the following.

\begin{lemma}\label{c:dor}
	Let $L, M \geq 0$ and suppose $\psi \colon \RR^{n} \to \RR^m$ (where $\RR^m$ is equipped with Euclidean distance) is such that each component $\psi_j$ is an $(L,M)$-regular Lip(1,1/2) function. Then,  
	\begin{align}
		\sum_{\substack{I \in \pcubes \\ I \subseteq I_0 }} \fapprox_{\psi}(3I)^2\size{I}^{n+1} \lesssim_{L,M,m} \size{I_0}^{n+1}, \quad I_0 \in \pcubes. 
	\end{align}
\end{lemma}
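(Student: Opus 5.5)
The plan is to reduce the vector-valued $\gamma$-number to the scalar ones componentwise, and then invoke the discretized geometric lemma for each $\psi_j$ and add up the $m$ resulting estimates.

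\textbf{Pointwise comparison.} For an affine map $A = (A_1,\dots,A_m)$ of the spatial variables we have
\[
|\psi(y,s) - A(y)|^2 = \sum_{j=1}^m |\psi_j(y,s) - A_j(y)|^2 .
\]
The constraint on $A$ is separable: each component $A_j$ is an arbitrary spatially affine function, independent of the others. Hence the infimum over $A$ splits into $m$ independent infima, and for every cube $3I$
\[
\fapprox_\psi(3I)^2 = \sum_{j=1}^m \fapprox_{\psi_j}(3I)^2 .
\]
Equality holds here, although the inequality ``$\le$'' is all that is needed. To prove it, I would take near-minimizers $A_j$ for each $\fapprox_{\psi_j}(3I)$ and assemble them into $A = (A_1,\dots,A_m)$.

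\textbf{Summation.} Summing over $I \subseteq I_0$ gives
\[
\sum_{I \subseteq I_0} \fapprox_\psi(3I)^2 \size{I}^{n+1} = \sum_{j=1}^m \sum_{I\subseteq I_0} \fapprox_{\psi_j}(3I)^2\size{I}^{n+1}.
\]
Each $\psi_j$ is $(L,M)$-regular Lip(1,1/2), so it satisfies \eqref{e:gamma-carleson} with constant $M$. By part (1) of the discretization lemma (the geometric lemma for $\gamma$-numbers), this is equivalent to the dyadic packing condition with constant $M' \sim_n M$, where the $\gamma$-number is evaluated on the cube $3I$ of radius comparable to $\ell(I)$. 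Therefore each inner sum is at most $C_n M \size{I_0}^{n+1}$, and the total is at most $C_n m M \size{I_0}^{n+1}$.

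\textbf{Main obstacle.} The only point needing care is matching normalizations. The paper writes $\fapprox(3I)$ for the cube dilate, while the lemma uses $\gamma(\Xbf_Q, 3\ell(Q))$. These differ by a fixed dilation factor, which changes $\gamma$ by at most a dimensional constant: one compares averages over comparable nested cubes, losing a factor in the measure ratio, and enlarges $3\ell(Q)$-cubes to a fixed multiple. If the lemma's version is stated only for one normalization, I would handle the discrepancy with the elementary monotonicity $\gamma_f(\Xbf,r) \lesssim \gamma_f(\Ybf,Cr)$ for $Q(\Xbf,r)\subseteq Q(\Ybf,Cr)$. The sum over the larger cubes $CI$ is then controlled by a sum over boundedly many ancestors and neighbors at comparable scales, each still inside a bounded dilate of $I_0$, and hence bounded by the Carleson condition. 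The dependence on $L$ enters only through that discretization lemma. The conclusion is thus $\lesssim_{L,M,m}$, and in fact linear in $m$.
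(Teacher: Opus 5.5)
Your proposal is correct and takes the same route as the paper: split the vector-valued $\gamma$-number into its components by the Pythagorean theorem, then add up the $m$ Carleson packing bounds for the $\psi_j$. Your observation that the infimum separates exactly, giving $\gamma_\psi(3I)^2=\sum_{j=1}^m\gamma_{\psi_j}(3I)^2$, is slightly cleaner than the paper's use of near-minimizers with an additive error $\ell(I)$. That error term is not scale-invariant: it leaves a sum $\sum_{I\subseteq I_0}\ell(I)^{n+2}$ that is not bounded by $\ell(I_0)^{n+1}$ uniformly in $I_0$.
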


\begin{proof} 
	For each $I \in \pcubes$ and each $j \in \{1,\dots,m\}$, let $A_{I,j} \colon \RR^{n-1}  \to \RR$ be an affine function such that 
	\begin{align}
		\fapprox_{\psi_j}(3I,A_{I,j}) \leq \fapprox_{\psi_j}(3I) + \size{I}.
	\end{align}
	Define $A_I \colon \RR^{n-1} \to \RR^{m}$ by 
	\begin{align}
		A_I(x) = \left(A_{I,1}(x),\dots,A_{I,m}(x) \right), \quad (x,t) \in \RR^{n-1} \times \RR.
	\end{align}
	Then, for each $I_0 \in \pcubes,$ it follows from the Pythagorean Theorem and the fact that each $\psi_j$ is $(L,M)$-regular parabolic that 
	\begin{align}
		\sum_{\substack{I \in \pcubes \\ I \subseteq I_0 }} \fapprox_{\psi}(3I)^2\size{I}^{n+1} &\leq \sum_{\substack{I \in \pcubes \\ I \subseteq I_0 }}  \sum_{j=1}^{m} \fapprox_{\psi_j}(3I,A_{I,j})^2\size{I}^{n+1}  \\
		&\leq \sum_{j=1}^m \sum_{\substack{I \in \Delta \\ I \subseteq I_0}} \left[ \fapprox_{\psi_j}(3I)^2\size{I}^{n+1} + \size{I}^{n+2} \right] \\
		&\lesssim_{L,M,m} \size{I_0}^{n+1} , 
	\end{align}
	as required. 
\end{proof}

We now show that regular parabolic bi-Lipschitz images of $n$-dimensional space time satisfies the geometric lemma. 

\begin{lemma}\label{l:geometric-lemma-bi-lip-images}
	Let $L,M \geq 0$ and suppose $\Psi \colon \RR^{n-1} \times \RR \to \RR^{m+1}$ an $(L,M)$-parabolic regular bi-Lipschitz map. Then, the set $\Sigma = \Psi(\RR^{n-1} \times \RR) \subseteq \RR^{m+1}$ is ADR with constant depending only on $L$ and $n$ and satisfies the geometric lemma with constants depending only on $L,M$ and $n$. 
\end{lemma}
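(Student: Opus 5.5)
The plan has two parts: ADR from the bi-Lipschitz property, and the geometric lemma from Lemma \ref{c:dor} plus a comparison of $\beta$-numbers of $\Sigma$ with $\gamma$-numbers of $\Psi$.

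\textbf{ADR.} Since $\Psi$ is $L$-bi-Lipschitz with respect to the parabolic metrics, $\Psi^{-1}$ is Lipschitz on $\Sigma$. Parabolic Hausdorff measure $\cH^{n+1}_{\rm par}$ scales at most by $L^{n+1}$ under $L$-Lipschitz maps, so $\sigma = \cH^{n+1}_{\rm par}|_\Sigma$ is comparable to the push-forward of Lebesgue measure on $\RR^n$. Here I use that $\cH^{n+1}_{\rm par}$ on $n$-dimensional space time is comparable to Lebesgue measure. Balls satisfy
\[
\Psi(B_n(\Psi^{-1}\Xbf, r/L)) \subseteq B(\Xbf,r)\cap\Sigma \subseteq \Psi(B_n(\Psi^{-1}\Xbf, Lr)),
\]
which gives ADR with constant depending on $L,n$. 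The set $\Sigma$ is closed because $\Psi$ is proper.

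\textbf{Geometric lemma.} I work with $\beta$-numbers in $\RR^{m+1}$, defined with $n$-dimensional planes of the form $V\times\RR$ with $V$ an $(n-1)$-plane in $\RR^m$, the natural analogue of $t$-independent planes. Fix $Q\in\dd(\Sigma)$ and pick a parabolic cube $I\in\Delta$ with $\ell(I)\approx \ell(Q)$ and
\[
\Psi^{-1}(B(\Xbf_Q,3\ell(Q))\cap\Sigma)\subseteq 3I,
\]
which is possible by bi-Lipschitzness. Let $A_I\colon\RR^{n-1}\to\RR^m$ be the near-minimiser for $\gamma_\psi(3I)$ with $\psi=(\Psi_1,\dots,\Psi_m)$. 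Since $\Psi_{m+1}(x,t)=c+t$, the set $P\coloneqq\{(A_I(x),c+t)\}$ lies in a plane of the allowed form: its image contains the full time line and its spatial part is an affine image of $\RR^{n-1}$. If $A_I$ is degenerate, I enlarge $P$ to any $(n-1)$-dimensional $V$ containing $A_I(\RR^{n-1})$, which only decreases distances.

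Then for $\Ybf=\Psi(y,s)$,
\[
\dist(\Ybf,P)\le |\psi(y,s)-A_I(y)|.
\]
Changing variables via $\Psi$, with Jacobian comparability from the ADR step, gives
\[
\beta(\Xbf_Q,3\ell(Q))^2 \lesssim_{L,n} \gamma_\psi(3I)^2.
\]
Each $I$ arises from boundedly many $Q$ of comparable size. Indeed, $\Psi^{-1}(\Xbf_Q)$ are $\gtrsim\ell(Q)/L$-separated for cubes of one generation by property (v) of Lemma \ref{cubes}. Moreover, $Q\subseteq Q_0$ forces $I\subseteq CI_0$ for a fixed $I_0$ with $\ell(I_0)\approx\ell(Q_0)$. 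Summing,
\[
\sum_{Q\subseteq Q_0}\beta(\Xbf_Q,3\ell(Q))^2\ell(Q)^{n+1}\lesssim \sum_{I\subseteq CI_0}\gamma_\psi(3I)^2\ell(I)^{n+1}\lesssim_{L,M,m}\ell(Q_0)^{n+1}.
\]
The final inequality is Lemma \ref{c:dor}, after covering $CI_0$ by boundedly many dyadic cubes of comparable size.

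\textbf{Main obstacle.} The step needing the most care is the bookkeeping in the last paragraph: matching dyadic cubes of $\Sigma$ to those of $\Delta$, handling the dilates $3I$ versus $3\ell(Q)$ balls, and the possible degeneracy of $A_I$. The $m$-dependence is harmless since Lemma \ref{l:lip-to-bi-lip} gives $m=m(n)$.
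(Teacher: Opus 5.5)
Your proposal is correct and follows essentially the same route as the paper. Both arguments get ADR from bi-Lipschitzness, assign to each $Q$ a cube $I_Q$ with $\Psi^{-1}(3Q)\subseteq 3I_Q$, and use the $t$-independent plane $A_{I_Q}(\RR^{n-1})\times\RR$ built from the $\gamma_\psi$-minimiser to get $\beta(\Xbf_Q,3\ell(Q))^2\lesssim\gamma_\psi(3I_Q)^2$. They then sum via bounded overlap and Lemma \ref{c:dor} on boundedly many top-level cubes (the paper's family $\mathcal{T}$).

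Three of your extra steps tidy points the paper passes over. You handle a possibly degenerate $A_I$. You use an exact minimiser instead of the paper's additive slack $\ell(I)$, which is not scale-invariant. You also flag that the constant from Lemma \ref{c:dor} depends on $m$, which is harmless since $m=m(n)$.
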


\begin{proof}
	The statement about ADR is clear since $\Psi$ is $L$-bi-Lipschitz. To prove the geometric lemma, fix some system of cube $\cubes$ on $\Sigma$ and $Q_0 \in \cubes$. For each $Q \in \cubes(Q_0),$ choose $I_Q \in \pcubes$ such that $\Psi^{-1}(3Q) \subset 3I_Q$ and $\size{I_Q} \sim \size{Q}.$ Since $\Psi$ is bi-Lipschitz, it is easy to check that 
	\begin{align}\label{e:bounded-cubes}
		\#\{ Q \in \cubes \colon I_Q = I\} \lesssim_{L,n} 1 \quad \mbox{ for each } I \in \pcubes. 
	\end{align}

	Let $\ell_0 \coloneqq \max\{\size{I_Q} \colon Q \in \dd(Q_0)\}$ and let $\mathcal{T}$ be the set of cubes $I \in \pcubes$ such that $I \cap \Psi^{-1}(3Q_0) \neq\emptyset$ and $\size{I} = \ell_0.$ Since each $I_Q$ intersect $3Q \subseteq 3Q_0,$ and satisfies $\size{I_Q} \leq \ell_0$, it follows that 
	\begin{align}\label{e:contained-cubes}
		\mbox{ for each } Q \in \cubes(Q_0) \mbox{ there exists } I \in \mathcal{T} \mbox{ such that } I_Q \subseteq I. 
	\end{align} 
	Moreover, since $\ell_0 \lesssim \ell(Q_0)$ and $\Psi$ is $L$-bi-Lipschitz, there is a parabolic ball $B \subset \RR^n$ with radius $r_B \sim_{L,n} \ell(Q_0)$ such that 
	\begin{align}\label{e:T-in-ball}
		\bigcup_{I \in \mathcal{T}} I \subseteq B.
	\end{align}
	Since $\mathcal{T}$ is a disjoint collection of cubes, this implies 
	\begin{align}\label{e:cal-T}
		\sum_{I \in \mathcal{T}} \ell(I)^{n+1} \lesssim_{L,n} \ell(Q_0)^{n+1}. 
	\end{align}

	Recalling the definition of $(L,M)$-parabolic regular bi-Lipschitz map from Definition \ref{d:p-regular-bi-lip}, we note that $\Psi$ can be written as 
	\begin{align}
		\Psi(x,t) = (\psi(x,t),t+c_\psi ) \quad \mbox{for all } (x,t) \in \RR^n, 
	\end{align}
	for some $\psi \colon \RR^{n-1} \times \RR \to \RR^n$ which satisfies the hypotheses of Lemma \ref{c:dor}. Without loss of generality $c_\psi = 0$. For each $I \in \pcubes$, we choose $A_I \colon \RR^{n-1}\to \RR^{m}$ to be an affine function such that 
	\begin{align}\label{e:omega-psi}
		\frac{1}{|3I|} \int_{3I} \left| \frac{\psi(y,s) - A_I(y)}{\ell(3I)} \right| \, dy ds \leq \fapprox_\psi(3I) + \size{I}.
	\end{align}
	
	Consider an arbitrary $Q \in \dd(Q_0)$ for the moment. The set  
	\begin{align}
		P_Q = A_{I_Q}(\RR^{n-1}) \times \RR  
	\end{align}
	defines a $t$-independent plane in $\RR^{m+1}$ associated to $Q$. Using the properties that $\Psi^{-1}(3Q) \subseteq 3I_Q$ and $\ell(I_Q) \sim \ell(Q)$, along with a change of variables $(X,t) = \Psi(x,t)$ and the inequality \eqref{e:omega-psi}, we have
	\begin{align*}
		\sapprox(\Xbf_Q,3\ell(Q))^2\sigma(Q) &	\lesssim \int_{3Q} \left| \frac{\dist((X,t), P_Q)}{\size{Q}}\right|^2 \, d\sigma(X,t)  \\
		&\lesssim_L \int_{3I_Q} \left|\frac{\dist(\Psi(x,t) , P_Q)}{\size{I_Q}} \right|^2 \, d\meas(x,t)\\ 
		&= \int_{3I_Q} \left|\frac{\psi(x,t) - A_I(x)}{\size{I_Q}} \right|^2 \, d\meas(x,t) \\
		&\lesssim \fapprox_{\psi}(3I_Q)\size{I_Q}^{n+1} + \size{I_Q}^{n+2}.
	\end{align*}
	Combining this with \eqref{e:bounded-cubes}, \eqref{e:cal-T} and Lemma \ref{c:dor}, we get 
	\begin{align*}
		\sum_{Q \in \cubes(Q_0)} \sapprox(\Xbf_Q,3\ell(Q))^2\sigma(Q) &\lesssim \sum_{Q \in \cubes(Q_0)} \left[\fapprox_\psi(3I_Q)^2\size{I_Q}^{n+1} + \ell(I_Q)^{n+2} \right] \\
		&\lesssim_{L,n} \sum_{I_0 \in \mathcal{T}} \sum_{\substack{I \in \pcubes \\ I \subseteq I_0}} \left[\fapprox_\psi(3I)^2\size{I}^{n+1} + \size{I}^{n+2} \right] \\
		&\lesssim_{L,M,n} \sum_{I_0 \in \mathcal{T}} \ell(I_0)^{n+1} \lesssim_{L,n} \size{Q_0}^{n+1}, 
	\end{align*}
	which finishes the proof. 
\end{proof}

The previous lemma gives us a way to check the geometric lemma in a higher dimensional space time. The follow easy lemma says that any subset of $n$-dimensional space time which satisfies the geometric lemma in $m$-dimensional space time ($m \geq n$) also satisfies it in the original space. 

\begin{lemma}\label{l:dimension-reduction}
	Suppose $\Sigma \subseteq \RR^{n+1}$ is parabolic Ahlfors-David regular and satisfies the geometric lemma in $\RR^{m+1} \supseteq \RR^{n+1}$ (i.e., the planes in the definition of $\beta$ are $n$-dimensional $t$-independent planes $\RR^{m+1}$), then $\Sigma$ satisfies the geometric lemma in $\RR^{n+1}$, i.e., is P-UR. 
\end{lemma}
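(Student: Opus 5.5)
Fix $Q\in\dd(\Sigma)$ and an $n$-dimensional $t$-independent plane $P\subseteq\RR^{m+1}$, with $P=V\times\RR$ for an $(n-1)$-dimensional affine subspace $V\subseteq\RR^m$. The plan is to produce, from $P$, a $t$-independent plane $P'\subseteq\RR^{n+1}$ with
\[
\dist(\Xbf,P')\lesssim_n \dist(\Xbf,P)\quad\text{for all } \Xbf\in\Sigma\cap B(\Xbf_Q,3\ell(Q)),
\]
up to an error that is itself controlled by $\beta^{(m)}$. Then $\beta_\Sigma^{(n)}(\Xbf_Q,3\ell(Q))\lesssim \beta_\Sigma^{(m)}(\Xbf_Q,C\ell(Q))$, and the discrete geometric lemma in $\RR^{n+1}$ follows by summing, after the usual adjustment of the radius by a bounded factor handled via ADR and doubling. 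Since the $t$-coordinate plays no role, everything reduces to the spatial geometry in $\RR^m\supseteq\RR^n$ at each time.

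For the construction, write $\pi$ for the orthogonal projection of $\RR^m$ onto $\RR^n$. There are two cases. If $\dim(\pi(\vec V))=n-1$, where $\vec V$ is the direction space, take $P'=\pi(V)\times\RR$. For $X\in\RR^n$ we have $\dist(X,\pi(V))\le|X-\pi(Y)|=|\pi(X-Y)|\le|X-Y|$ for every $Y\in V$, so $\dist(X,P')\le\dist(X,P)$ directly, with no error term. If instead $\dim(\pi(\vec V))<n-1$, then $\pi(V)$ is contained in some $(n-1)$-dimensional affine subspace $W\subseteq\RR^n$; take $P'=W\times\RR$. The same projection inequality gives $\dist(X,W)\le\dist(X,\pi(V))\le\dist(X,V)$. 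So in fact the inequality $\dist(\Xbf,P')\le\dist(\Xbf,P)$ holds for all $\Xbf\in\RR^{n+1}$ in both cases.

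With this comparison in hand, I would take $P$ to be a near-minimizer for $\beta^{(m)}(\Xbf,r)$ and get
\[
\beta^{(n)}_\Sigma(\Xbf,r)\le\beta^{(m)}_\Sigma(\Xbf,r)
\]
pointwise for every $\Xbf\in\Sigma$ and $r>0$. The Carleson condition \eqref{betacarldef.eq}, or its discrete version, then transfers immediately. The measure $\sigma$ is the same in both ambient spaces, since $\cH^{n+1}_{\rm par}$ is intrinsic to the parabolic metric restricted to $\Sigma$, and $\RR^{n+1}\subseteq\RR^{m+1}$ isometrically.

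The only point I expect to need any care is the second case, where the projected subspace degenerates. It must still be embedded in a genuine $t$-independent hyperplane of $\RR^{n+1}$, and I must make sure the resulting plane's normal is purely spatial. This holds because $W\times\RR$ contains the $t$-direction. Otherwise the argument is a one-line projection estimate.
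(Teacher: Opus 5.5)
Your proposal is correct and is essentially the paper's argument. You project $P$ onto $\RR^{n+1}$ by the $1$-Lipschitz map that fixes $\RR^{n+1}$ and the $t$-coordinate, and, if the projected plane has dimension less than $n$, you enlarge it to an $n$-dimensional $t$-independent hyperplane; this gives $\dist(\Xbf,P')\le\dist(\Xbf,P)$ for all $\Xbf\in\RR^{n+1}$ and hence $\beta^{(n)}_\Sigma\le\beta^{(m)}_\Sigma$ pointwise. Your case split just makes explicit what the paper does in one step, and the error terms and radius adjustments you hedge about in your first paragraph are not needed.
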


\begin{proof}
	Fix a point $\Xbf \in \Sigma$ and an $n$-dimensional $t$-independent plane $P$ in $\RR^{m+1}$. It suffices to find an $n$-dimensional $t$-independent plane $V$ in $\RR^{n+1}$ such that 
	\begin{align}
		\dist(\Xbf , V) \leq \dist(\Xbf,P).
	\end{align}  
	Let $\pi$ denote the projection onto $\RR^{n+1}$ which fixes the $t$-component and let $V' = \pi(P)$ denote the projection of $P$ onto $\RR^{n+1}$. Thus, $V'$ is a $k$-dimensional $t$-independent plane in $\RR^{n+1}$ for some $1 \leq k \leq n$. Take $V$ to be any $n$-dimensional $t$-independent plane which contains $V'$. Then, since $\pi$ is 1-Lipschitz and $\Xbf \in \RR^{n+1}$, we have 
	\begin{align}
		\dist(\Xbf,V) \leq \dist(\Xbf,V') = \dist(\pi(\Xbf),\pi(P)) \leq \dist(\Xbf,P), 
	\end{align}
	which finishes the proof.  
\end{proof}

We now combine the above results to conclude the proof of Theorem \ref{t:lip-bi-lip}. 

\begin{proof}[Proof of Theorem \ref{t:lip-bi-lip}]
	Let $\Sigma$ satisfies the hypotheses of Theorem \ref{t:lip-bi-lip}. By Theorem \ref{l:lip-to-bi-lip}, we know that $\Sigma$ has BPRPBI in $\RR^{m+1}$. Since $(L,M)$-regular parabolic bi-Lipschitz images of $\RR^n$ are uniformly ADR and satisfy the geometric lemma in $\RR^{m+1}$ with uniform constants (Lemma \ref{l:geometric-lemma-bi-lip-images}), it follows from the \textit{stability of the geometric lemma under big pieces}, see \cite[Proposition 2.29]{BHHLN-BP}, that $\Sigma$ satisfies the geometric lemma in $\RR^{m+1}$. Then, that $\Sigma$ is P-UR follows directly from Lemma \ref{l:dimension-reduction}. 
\end{proof}

\section{P-UR implies BPRPBI}

Here we show the other half of Theorem \ref{main.thrm}, that is, the following theorem.

\begin{theorem}\label{t:UR-implies-BPLI}
Suppose that $\Sigma$ is a parabolic Ahlfors-David regular set (see Definition \ref{ADR.def}). If $\Sigma$ is parabolic uniformly rectifiable then $E$ has big pieces of regular parabolic bi-Lipschitz images of $n$-dimensional space. 
\end{theorem}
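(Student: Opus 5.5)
We plan to follow the inductive scheme indicated in the introduction, run on the Carleson packing of the corona decomposition of Theorem \ref{PURiffcorona.thrm}. Fix $Q_0 \in \dd(\Sigma)$ and a small $L$. By the packing conditions (1) and (3) and Chebyshev, there is $N$ (depending only on the constants) and a subset of $Q_0$ of measure at least $\sigma(Q_0)/2$ whose points lie in at most $N$ cubes of $\B$ and in at most $N$ regimes $\sbf$ with $Q(\sbf)\subseteq Q_0$. We then argue by induction on $k \le N$. Let $\mathcal{F}_k$ be the family of cubes $Q \subseteq Q_0$ for which a ``large'' portion of $Q$ (say at least $(1-2^{-k-10})\sigma(Q)$ after discarding the bad part) is met by at most $k$ stopping-time regimes and bad cubes below $Q$. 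The inductive claim is that every $Q \in \mathcal{F}_k$ admits an $(L_k,M_k)$-regular parabolic bi-Lipschitz map $F_Q\colon \RR^n \to \RR^{n+1}$ with $\sigma(F_Q(\RR^n)\cap Q) \ge \theta_k \sigma(Q)$, and moreover that $F_Q$ coincides with an affine parabolic isometry of a $t$-independent plane outside a ball of radius $\sim \ell(Q)$. This last ``planar at infinity'' normalization is the key feature that makes gluing possible.

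The base case $k=1$ (a single regime $\sbf$) comes from item (4): for the semi-coherent regime $\sbf'=\sbf$ we take $F(x,t) = (\psi(x,t),x,t)$ in the coordinates $P^\perp\times P$. This is regular bi-Lipschitz (a graph map), equal to the identity of $P$ outside $B(\Xbf_{Q(\sbf)},\kappa\diam Q(\sbf))$ because $\supp\psi$ is compact there. By \eqref{closegraphcorona.eq}, together with the Carleson bound on the $\beta$'s and the fact that the minimal cubes of $\sbf$ cover a large part, it captures a large portion of $Q(\sbf)$. For the inductive step, take $Q$ in the top regime $\sbf$. Below $\sbf$ we have maximal subcubes $\{R_j\}$ (the children of the minimal cubes), each carrying a map $F_{R_j}$ from the inductive hypothesis. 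We pass to a well-separated subfamily satisfying the separation \eqref{e:separation-gluing-prime} when pulled back to $P$ by the graph map; this costs only a dimensional fraction of the mass, which is acceptable because we only need a fixed proportion. We then modify the graph map $(\psi, \mathrm{id})$ near the preimage $I_j\subset P$ of each $R_j$. Since the graph is $L\ell(R_j)$-close to $\Sigma$ near $R_j$, and $F_{R_j}$ is an affine parabolic isometry of a plane $P_j$ outside a ball of radius $\sim\ell(R_j)$, we need to interpolate, on an annulus of $3I_j\setminus 2I_j$, between the graph of $\psi$ (essentially the plane tangent to $\Sigma$ at scale $\ell(R_j)$) and the plane $P_j$.

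The main obstacle is exactly this interpolation. The planes $P_j$ need not be close to parallel to $P$ or to the graph, so we must build a smooth family of $t$-independent rotations $O_j(x,t)$ that rotates the graph plane to $P_j$ across the annulus. Each coordinate must be of the form $\eta_j\theta_j$ with $\eta_j$ a parabolic bump adapted to $I_j$ and $\theta_j$ regular with $|\theta_j|\lesssim \ell(I_j)$. Once this holds, Lemma \ref{l:gluing-for-regular-functions-prime} (H2), together with (H1) for the inner pieces $F_{R_j}\circ(\text{isometry}) - \text{plane}$, shows that the glued map has regular Lip(1,1/2) coordinates with constants $(C(L_k+K),C(M_k+L_k^2+K^2))$. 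To construct the rotations we would try a smooth path in $SO(n)$ composed with a smooth cutoff in the spatial and time variables. The derivative bounds then hold with the parabolic scaling. Since rotating a flat piece requires space, we take the annulus of size $\sim\ell(I_j)$ with a large constant; this is where the bi-Lipschitz constant grows. For lower bi-Lipschitz control of the glued map we argue as in the cases of the proof of Lemma \ref{l:lip-to-bi-lip}: for points in different separated pieces we use that the graph is bi-Lipschitz and each piece stays within $C\ell(I_j)$ of it, and for points in the same piece we use the bi-Lipschitz property of $F_{R_j}$ and of the rotation. Finally, the new image captures the union of the big pieces inside the $R_j$ together with $\Sigma$ near the minimal part of $\sbf$, giving $\theta_{k+1}\gtrsim\theta_k$. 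Since $N$ is bounded, the constants stay controlled, and applying this to $Q_0$ yields BPRPBI in the form (2').
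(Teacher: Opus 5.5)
Your outline has the right architecture: a bounded-depth induction over the corona decomposition, maps that are planar outside a ball of radius $\sim\ell(Q)$, a separated family of sub-pieces, smooth $t$-independent rotations, and gluing via Lemma \ref{l:gluing-for-regular-functions-prime}. As written, though, the induction bookkeeping does not close, and the gluing step is missing the two mechanisms that make the paper's construction work.

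\textbf{The induction.} The paper inducts on the Carleson constant, proving $H(a)\Rightarrow H(a+b)$ through the Hofmann--Martell extrapolation Lemma \ref{extraplem.lem}. You instead count pointwise the regimes and bad cubes met below $Q$ and use Chebyshev. That is a legitimate alternative, but not with your thresholds.
\begin{itemize}
\item Chebyshev bounds the exceptional set only by $\sim\sigma(Q_0)/N$; you yourself only get $\sigma(Q_0)/2$. You never get $2^{-N-10}\sigma(Q_0)$, so $Q_0$ need not lie in your $\mathcal{F}_N$.
\item Nor can you assume that every maximal non-$\mathbf{S}$ subcube $R_j$ lies in $\mathcal{F}_k$.
\end{itemize}
What does close is the following.
\begin{itemize}
\item Take thresholds $\delta_k$ that are small and increasing, say $\delta_N=1/2$ and $\delta_k\le\delta_{k+1}/4$.
\item By coherence, the count drops by exactly one from $Q\in\mathbf{S}$ to each $R_j$. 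It also drops by one from a bad $Q$ to each of its children; in that case pigeonhole a child, as in the paper's Case 2a.
\item Use the dichotomy of Cases 1 and 2. Either the set $A$ of points of $Q$ that never leave $\mathbf{S}$ has measure $\ge\frac{\delta_{k+1}}{2}\sigma(Q)$, or the $R_j$ lying in $\mathcal{F}_k$ have total measure $\ge\frac{\delta_{k+1}}{4}\sigma(Q)$.
\end{itemize}
You cannot claim that the glued image captures both $A$ and the pieces, since the modified regions may swallow $A$. In the base case, the graph captures a big piece because of Lemma \ref{impob1.lem}, not because of $\beta$-bounds or because minimal cubes cover much. You must also apply item (4) of Theorem \ref{PURiffcorona.thrm} to the truncated regime $\mathbf{S}\cap\dd(Q)$; otherwise the map is planar only outside a ball of radius $\sim\diam Q(\mathbf{S})$.

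\textbf{The gluing.} This is the genuine gap: you correctly flag the interpolation as the crux, but you do not give a construction with a lower Lipschitz bound. Two ideas are missing.

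First, the spiral's angle must depend only on a rotation-invariant gauge. The paper sets $F(X,t)=(g_{\alpha}X,t)$ with $\alpha=\theta\,\eta(\rho(X,t)/R)$. Since $\rho\circ F=\rho$, the inverse is $(g_{-\alpha}X,t)$, a map of the same form. This yields bi-Lipschitz bounds with dimensional constants and $F(B(0,r))=B(0,r)$ (Lemma \ref{lem:givenrot}). With a generic cutoff, for instance one adapted to $I_j$, the angular derivative of $\alpha$ along circles is of order $\theta$ times a constant that is not small. The map can then fold, because the angle between $P$ and $P_j$ may be close to $\pi/2$.

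Second, rotation and graph-matching must be decoupled and nested, because the graph of $\psi$ is not a plane near $I_j$. The paper proceeds in three steps.
\begin{enumerate}
\item It rotates each piece about its center so that, outside $\xi_L B_{i,0}$, the piece is exactly the vertical translate $\Xbf\mapsto\Xbf+c_i^*n_{P_0}$ (Lemma \ref{lem:givrotimages}).
\item It flattens the graph function to $g\equiv c_i^*$ on a cube containing all of $\xi_L B_{i,0}$ (Lemma \ref{lem:constreplem}). This is where (H2) and the estimate $|\theta-c_i^*|\lesssim\diam(Q_i)$ enter, the latter coming from the corona approximation at the grandparent.
\item Only then does it paste the two maps with Lemma \ref{lem:flatpatching}, on an annulus where they agree exactly. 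The lower bound across pieces then follows from \eqref{eq:projcontain}.
\end{enumerate}
In your sketch, one cutoff on $3I_j\setminus 2I_j$ interpolates between the graph map and a partly rotated plane. There the differential contains an order-one term, namely the difference of the two maps times $\nabla\eta_j$. None of your cases bounds the glued map from below in that transition layer.
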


We sketch the general strategy before beginning the proof. In \cite{BHHLN-BP}, using ideas from \cite{BH-BP}, the authors prove that corona decompositions by a class of sets $\mathcal{E}$ implies that the set is big pieces of big pieces of the sets $\mathcal{E}$, written $BP^2(\mathcal{E})$. In the context of P-UR, \cite{BHHLN-BP} shows that the (weaker unilateral) corona decomposition implies that P-UR sets have $BP^2(\mathcal{E})$, where $\mathcal{E}$ is the collection of $(L,M)$ parabolic regular {\bf graphs}. The main tool there, which is important for us as well, is a induction argument that involves, in the first step, producing for each $Q_0$ a ``large" graph and several smaller graphs for a collection of separated sub-cubes of $Q_0$, and using the larger graph to tie them together. In subsequent steps, a large graph for $Q_0$ is produced to tie together separated {\it collections} of graphs. Our scheme is not-dissimilar, but much more delicate. A fundamental issue is that, in these repeated constructions, the reference planes for various graphs may `twist" and make it impossible to produce a graphical approximation, that is, to show that $BP(\mathcal{E})$ where $\mathcal{E}$ is the collection of $(L,M)$ parabolic regular {\bf graphs}. Indeed, this is not just a parabolic issue; Hyrcak's example (as mentioned in the introduction) shows that UR sets can fail to have big pieces of Lipschitz graphs. To remedy this, we introduce a `slow' twisting mechanism, which is also present in the work of Azzam and Schul \cite{AS-HardSard}, to twist these reference planes and then carefully glue these pieces together. A challenge in the parabolic setting, as opposed to the setting of (Euclidean) UR sets, is that we must produce the additional regularity on the maps involved to obtain {\it regular} parabolic bi-Lipschitz images. 

\subsection{Smooth Parabolic Rotation via Givens Rotations and Lipschitz Images}
As hinted at above, we will need to produce a slow twisting that takes one plane to another.

\begin{lemma}\label{lem:givenrot}
	Let $P_0$ and $P_1$ be two $t$-independent planes in $\mathbb{R}^{n+1}$ which pass through the origin and $R > 0$. There exists a map $F$ from $\mathbb{R}^{n+1}$ to $\mathbb{R}^{n+1}$ such that 
	\begin{enumerate}
		\item $F|_{B(0,R)}$ is a (fixed) linear transformation that takes $P_0$ to $P_1$ and $F|_{B(0,2R)^c}$ is the identity. 
		\item $F(B(0,r)) = B(0,r)$ for all $r >0$.
		\item Writing $F(X,t) = (F_1(X,t), \dots, F_n(X,t), F_{n+1}(X,t))$, we have
		\[F_{n+1}(X,t) = t\]
		and, for $i = 1,2,\dots n$,
		\[|\nabla F_i| \lesssim 1, \quad |\nabla^2 F_i| \lesssim 1/R, \quad |\partial_t F_i| \lesssim 1/R,  \]
		where the implicit constants depend only on dimension.
		\item $F$ is an $(L, M)$-regular parabolic bi-Lipschitz map, where $L$ and $M$ depend only on dimension.
	\end{enumerate}
\end{lemma}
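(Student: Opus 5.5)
Since both planes are $t$-independent, write $P_k=\{(X,t): X\cdot\nu_k=0\}$ with $\nu_k\in\mathbb S^{n-1}$. The plan is to choose a rotation $O\in SO(n)$ with $O\nu_0=\nu_1$, acting only in the spatial variables, so that $O$ maps the spatial hyperplane $\nu_0^\perp$ onto $\nu_1^\perp$. Then write $O=\exp(A)$ with $A$ skew-symmetric and $\|A\|\le \pi$. One can take $O$ to be a single Givens rotation in the $2$-plane spanned by $\nu_0,\nu_1$, by angle $\vartheta\in[0,\pi]$; if $\nu_1=\pm\nu_0$ take $O=\mathrm{Id}$, since $P_0=P_1$. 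Next fix a smooth cutoff $\chi:[0,\infty)\to[0,1]$ with $\chi\equiv1$ on $[0,1]$ and $\chi\equiv 0$ on $[2^{3/4}\cdot 2,\infty)$; a small adjustment using \eqref{rhodistcomp.eq} will make the balls work out. Define
\[
F(X,t)=\bigl(\exp\bigl(\chi(\rho(X,t)/R)A\bigr)X,\ t\bigr).
\]
On $B(0,R)$ we have $\rho\le R$, so $F=(OX,t)$, which is linear and takes $P_0$ to $P_1$. Off $B(0,2R)$ one would like $F=\mathrm{Id}$, so I will choose $\chi$ in terms of a smoothed version of $\dist(\cdot,0)$ rather than $\rho$: $\chi\equiv1$ where $\rho\le R$ and $\chi\equiv0$ where $\rho\ge 2R/2^{3/4}$. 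The gap $[R,2^{1/4}R]$ is nonempty, and \eqref{rhodistcomp.eq} then gives (1). Property (2), $F(B(0,r))=B(0,r)$, is immediate because for fixed $(X,t)$ the map is a rotation of $X$ that preserves $|X|$ and leaves $t$ unchanged. So $F$ preserves both $|X|$ and $t$, hence preserves $\dist(\cdot,0)$ and maps each sphere onto itself; this is clearly onto, since the inverse is given by the same formula with $-A$.

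For (3), $F_{n+1}=t$ by construction. The spatial components are $F_i=\bigl(e^{\chi(\rho/R)A}X\bigr)_i$. Write $s=\chi(\rho/R)$, which is smooth away from the origin; near the origin $\chi$ is constant, so there is no singularity. The chain rule gives $\nabla F_i=e^{sA}e_i$ (a row) plus $(Ae^{sA}X)_i\,\chi'\,\nabla\rho/R$. Since $|\nabla\rho|\lesssim1$, $|X|\lesssim R$ on the support of $\chi'$, and $\|A\|\le\pi$, we get $|\nabla F_i|\lesssim1$. Each further spatial derivative either falls on $\chi(\rho/R)$, gaining a factor $R^{-1}$, or falls on $X$, which removes a factor of $|X|\lesssim R$. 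So $|\nabla^2F_i|\lesssim 1/R$. Similarly $\partial_t\rho=\tfrac12 t|t|/\rho^3$ satisfies $|\partial_t\rho|\lesssim 1/\rho$, which is $\lesssim 1/R$ on the support, giving $|\partial_tF_i|\lesssim |X|R^{-2}\lesssim 1/R$.

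For (4), the upper Lipschitz bound follows from the mean value theorem in $X$ together with $|F_i(X,t)-F_i(X,s)|\le R^{-1}|t-s|$. When $|t-s|\le R^2$ this is at most $|t-s|^{1/2}$. When $|t-s|>R^2$ we instead use the bound $|F_i|\lesssim R$ on $B(0,2R)$ and the identity outside, which again gives a bound of order $|t-s|^{1/2}$. The lower bound follows by applying the same argument to $F^{-1}$, which has the same form with $-A$. Regularity I plan to prove through Lemma~\ref{l:gluing-for-regular-functions-prime} or directly via $\gamma$-numbers. Write $F_i=X_i+h_i$ with $h_i$ supported in $B(0,2R)$ and $|h_i|\lesssim R$. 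For a cube $Q$ with $\ell(Q)\le R$, Taylor expansion in $x$ with the error controlled by $|\nabla^2F_i|\ell(Q)^2+|\partial_tF_i|\ell(Q)^2\lesssim \ell(Q)^2/R$ gives $\gamma_{F_i}(3Q)\lesssim \ell(Q)/R$; summing geometrically gives the required packing. For $\ell(Q)>R$, $h_i$ contributes to $\gamma_{F_i}(3Q)$ at most $\lesssim (R/\ell(Q))^{(n+1)/2+1}$, by the $L^2$ size of $h_i$ on a set of volume $\sim R^{n+1}$, and the sum of these over large cubes meeting $B(0,2R)$ is bounded. I expect this bookkeeping of the parabolic $\gamma$ Carleson sum, especially for large cubes, to be the only delicate step. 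The cleanest route is to apply case (H2) of Lemma~\ref{l:gluing-for-regular-functions-prime} to the single function $h_i=\eta\,\theta$ with $\eta=\chi'$-type cutoff, or to verify the (H1)-style local condition directly as above.
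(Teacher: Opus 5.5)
Your proposal is correct and follows essentially the same route as the paper: the same spatial Givens rotation $e^{\chi(\rho/R)A}$ with the cutoff switching between $\rho/R=1$ and $\rho/R=2^{1/4}$, the same chain-rule derivative bounds, an inverse of the same form (by $\rho$-invariance) for the lower Lipschitz bound, and the same split of the $\gamma$-number Carleson estimate at scale $R$ (Taylor expansion below $R$, comparison with the coordinate function $x_j$ above $R$). One small slip is that $\partial_t\rho = t/(2\rho^3)$, not $\tfrac12 t|t|/\rho^3$, although the bound $|\partial_t\rho|\lesssim 1/\rho$ that you actually use is correct.
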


\begin{proof}
	First, recall $\rho(X,t) = (|X|^4 + |t|^2)^{1/4}$ and, as observed in \eqref{rhodistcomp.eq},
	\[\rho(X,t) \le \dist((X,t), (0,0)) \le  2^{3/4}\rho(X,t).\]
	Thus, if $a = 2^{1/4}$, 
	\[(X,t) \in B(0,R) \implies \rho(X,t) < R\]
	and 
	\[(X,t) \in B(0,2R)^c \implies \rho \ge aR.\]
	Let \[
	\nu_0,\nu_1\in \mathbb S^{n-1}
	\]
	be such that $(\nu_0,0)$ and $(\nu_1,0)$ are normal to $P_0$ and $P_1$, respectively, chosen such that
	\[
	\nu_0\cdot \nu_1\ge 0.
	\]
	Then the angle between $P_0$ and $P_1$ is given by 
	\[
	\theta:=\arccos(\nu_0\cdot \nu_1)\in [0,\pi/2].
	\]
	If $\theta = 0$, we can simply take $F$ to be the identity, so we will assume this is not the case.
	
	Define two unit vectors \[
	e_1:=\nu_0
	\]
	and
	\[
	e_2:=\frac{\nu_1-(\nu_1\cdot \nu_0)\nu_0}
	{|\nu_1-(\nu_1\cdot \nu_0)\nu_0|}.
	\]
	Since
	\[
	|\nu_1-(\nu_1\cdot \nu_0)\nu_0|=\sin\theta,
	\]
	we have
	\[
	\nu_1=\cos\theta\,e_1+\sin\theta\,e_2.
	\]
	Define the skew-symmetric spatial map $K:\mathbb{R}^n\to\mathbb{R}^n$ by
	\[
	KX=(X\cdot e_1)e_2-(X\cdot e_2)e_1.
	\]
	Thus $Ke_1=e_2$, $Ke_2=-e_1$, and $K=0$ on $\operatorname{span}\{e_1,e_2\}^{\perp}$.  For $\alpha\in\mathbb{R}$, define the spatial Givens rotation $g_\alpha:\mathbb{R}^n\to\mathbb{R}^n$ by
	\[
	\begin{aligned}
		g_\alpha X
		&=X+ (\cos\alpha-1)\bigl[(X\cdot e_1)e_1+(X\cdot e_2)e_2\bigr] \\
		&\qquad +\sin\alpha\bigl[(X\cdot e_1)e_2-(X\cdot e_2)e_1\bigr].
	\end{aligned}
	\]
	Equivalently, if
	\[
	X=X^\perp+a e_1+b e_2,
	\qquad X^\perp\perp \operatorname{span}\{e_1,e_2\},
	\]
	then
	\[
	g_\alpha X
	=X^\perp+(a\cos\alpha-b\sin\alpha)e_1
	+(a\sin\alpha+b\cos\alpha)e_2.
	\]
	In particular,
	\[
	g_\theta \nu_0=\nu_1,
	\]
	so the associated space-time map
	\[
	G_\alpha(X,t):=(g_\alpha X,t)
	\]
	sends $P_0$ to the $t$-independent plane with spatial normal $g_\alpha\nu_0$.  Thus,
	\begin{equation}\label{eq:fullrotgiv}
		G_\theta(P_0)=P_1.
	\end{equation}
	Then, we have a smooth parameterization of planes
	\[
	P_\alpha:=G_\alpha(P_0)=P_{g_\alpha\nu_0},
	\qquad 0\le \alpha\le \theta.
	\]
	Differentiating the formula for $g_\alpha$ gives
	\begin{equation}\label{e:diff-g_alpha}
		\frac{d}{d\alpha}g_\alpha X=K g_\alpha X,
		\qquad
		\frac{d^2}{d\alpha^2}g_\alpha X=K^2g_\alpha X.
	\end{equation}
	Also, 
	\[
	|g_\alpha X|=|X|,
	\qquad
	\|K\|_{\ell^2 \to \ell^2} = 1,
	\qquad
	\|K^2\|_{\ell^2 \to \ell^2} =  1.
	\]
	Next, we let $\eta:\RR\to[0,1]$ be a function such that
	\[
	\eta(s)=1 \quad \text{for } s\le 1,
	\qquad
	\eta(s)=0 \quad \text{for } s\ge a=2^{1/4},
	\]
	and
	\[
	\eta'(s)<0 \quad \text{for } 1<s<a.
	\]
	Now we want to smoothly turn the plane as a function of the distance to the center; however, the lack of smoothness of the parabolic metric (in particular on the plane $t = 0$) is problematic. For this reason, we employ the smoother, but comparable, function $\rho$. The angle at which we `spiral' is given by the function
	\[
	\psi(X,t):=\theta\,\eta\!\left(\frac{\rho(X,t)}{R}\right).
	\]
	Finally, we define our bi-Lipschitz map $F(X,t)$ as
	\[
	F(X,t):=G_{\psi(X,t)}(X,t)=(g_{\psi(X,t)}X,t).
	\]
	It will sometimes be useful to write
	\[F(X,t)=(f(X,t),t),
	\text{ \ \ where\ \ }
	f(X,t):=g_{\psi(X,t)}X.\]

	Now, we set out to show that $F(X,t)$ has the required properties. Starting with (1), we have observed above that, if $\dist((X,t), 0) < R$, then $\rho(X,t) < R$. Therefore, $\psi(X,t) = \theta$, which, in turn, gives $F(X,t):=G_{\theta}(X,t)$, the Givens rotation that takes $P_0$ to $P_1$. Moreover, if $\dist((X,t), 0) \geq 2R$, then $\rho(X,t) \ge aR$. Therefore, $\psi(X,t) = 0$, which, in turn, gives $F(X,t):=G_{0}(X,t) = (X,t)$. It is also clear that $F$ is smooth. We can deduce (2), that $F(B(0,r)) = B(0,r)$ for all $r > 0$, because, for fixed $s$, the points where $\dist((X,t), 0) = s$ are just rotated in space. 
	
	Next, we prove the derivative bounds on $F$ stated in item (3), which amount to direct computation. We begin by estimating the derivatives of the spatial part
	\[
	f(X,t):=g_{\psi(X,t)}X.
	\]
	The estimates are only nontrivial on the region
	\[
	\mathcal A_R:=\{(X,t):R<\rho(X,t)< a R\}.
	\]
	Outside $\mathcal A_R$, the angle $\psi$ is constant, so $f$ is either a fixed rotation or the identity.
	
	Let
	\[
	r:=\left|X\right|,
	\qquad
	\rho=(r^4+t^2)^{1/4}.
	\]
	For $\rho>0$,
	\[
	\partial_{x_j}\rho=\frac{r^2x_j}{\rho^3},
	\]
	and
	\[
	\partial_t\rho=\frac{t}{2\rho^3}.
	\]
	Thus
	\[
	\left|\nabla_X\rho\right|\le 1,
	\qquad
	\left|\partial_t\rho\right|\le \frac{C}{\rho}.
	\]
	For the second spatial derivatives,
	\[
	\partial^2_{x_jx_k}\rho
	=\frac{2x_jx_k+r^2\delta_{jk}}{\rho^3}
	-\frac{3r^4x_jx_k}{\rho^7}.
	\]
	Hence
	\[
	\left|\nabla_X^2\rho\right|\le \frac{C_n}{\rho}.
	\]
	On $\mathcal A_R$, where $\rho\approx R$, these yield
	\[
	\left|\nabla_X\rho\right|\le C,
	\qquad
	\left|\partial_t\rho\right|\le \frac{C}{R},
	\qquad
	\left|\nabla_X^2\rho\right|\le \frac{C_n}{R}.
	\]
	
	Let
	\[
	M_1:=\|\eta'\|_{L^\infty},
	\qquad
	M_2:=\|\eta''\|_{L^\infty}.
	\]
	Since
	\[
	\psi=\theta\eta(\rho/R),
	\]
	we have
	\[
	\partial_{x_j}\psi
	=\frac{\theta}{R}\eta'(\rho/R)\partial_{x_j}\rho,
	\]
	\[
	\partial_t\psi
	=\frac{\theta}{R}\eta'(\rho/R)\partial_t\rho,
	\]
	and
	\[
	\partial^2_{x_jx_k}\psi
	=\theta\left[
	\frac{1}{R^2}\eta''(\rho/R)
	\partial_{x_j}\rho\,\partial_{x_k}\rho
	+\frac{1}{R}\eta'(\rho/R)\partial^2_{x_jx_k}\rho
	\right].
	\]
	Therefore, on $\mathcal A_R$,
	\[
	\left|\nabla_X\psi\right|\le \frac{C\theta M_1}{R},
	\]
	\[
	\left|\partial_t\psi\right|\le \frac{C\theta M_1}{R^2},
	\]
	and
	\[
	\left|\nabla_X^2\psi\right|\le \frac{C_n\theta(M_1+M_2)}{R^2}.
	\]
	
	For the first spatial derivatives of $f$, the chain rule and \eqref{e:diff-g_alpha} gives
	\[
	\partial_{x_j}f
	=g_\psi e_j+\partial_{x_j}\psi\,K g_\psi X.
	\]
	Since $\left|K g_\psi X\right|\le \left|X\right|\le \rho\le  a R$ on $\mathcal A_R$,
	\[
	\left|\partial_{x_j}f\right|
	\le 1+C\theta M_1.
	\]
	Thus, for $1\le i\le n$,
	\[
	\left|\nabla_X F_i(X,t)\right|\le C,
	\]
	with $C$ depending only on the cutoff and dimension.
	
	For the second spatial derivatives, we differentiate once more and use \eqref{e:diff-g_alpha} to obtain
	\[
	\begin{aligned}
		\partial^2_{x_jx_k}f
		&=\partial_{x_k}\psi\,K g_\psi e_j
		+\partial_{x_j}\psi\,K g_\psi e_k \\
		&\quad +\partial_{x_j}\psi\,\partial_{x_k}\psi\,K^2g_\psi X
		+\partial^2_{x_jx_k}\psi\,K g_\psi X.
	\end{aligned}
	\]
	Using the bounds above and again using $\left|X\right|\le  a R$ on $\mathcal A_R$, we get
	\[
	\begin{aligned}
		\left|\partial^2_{x_jx_k}f\right|
		&\le \frac{C}{R}
		+\frac{C}{R}
		+\frac{C}{R^2}\left|X\right|
		+\frac{C}{R^2}\left|X\right| \\
		&\le \frac{C}{R}.
	\end{aligned}
	\]
	Therefore, for $1\le i\le n$,
	\[
	\left|\nabla_X^2 F_i(X,t)\right|\le \frac{C}{R}.
	\]
	
	Finally, for the time derivative,
	\[
	\partial_t f
	=\partial_t\psi\,K g_\psi X.
	\]
	Since $\left|\partial_t\psi\right|\le C/R^2$ and $\left|X\right|\le  a R$ on $\mathcal A_R$,
	\[
	\left|\partial_t f\right|\le \frac{C}{R}.
	\]
	Thus, for $1\le i\le n$,
	\[
	\left|\partial_t F_i(X,t)\right|\le \frac{C}{R}.
	\]
	
	Summarizing, the spatial components $F_i$, $1\le i\le n$, satisfy
	\[
	\left|\nabla_X F_i\right|\le C,
	\qquad
	\left|\nabla_X^2 F_i\right|\le \frac{C}{R},
	\qquad
	\left|\partial_t F_i\right|\le \frac{C}{R}.
	\]
	The constants depend on $n$ and the fixed cutoff $\eta$, and may be taken uniform in the planes because $0\le\theta\le\pi/2$.  Notice also that $F$ fixes the $t$ coordinate, so we have shown all the desired properties of $F$ aside from showing that each of the components of $F$ are regular Lip(1,1/2) functions. This is our task of the remainder of the proof.

	For the first part of (4), we show that $F$ is bi-Lipschitz. As we shall see below, $F$ has an inverse of the same form, so the heart of the matter is to show the $F$ is Lipschitz. To this end, we start by observing that 
	\[\rho(X,t) = \|(|X|, |t|^{1/2})\|_{\ell^4}.\]
	Thus, $\rho$ is $1$-Lipschitz; indeed, by the reverse triangle inequality,
	\begin{align*}
		|\rho(X,t) - \rho(Y,s)| &\le \|(|X|, |t|^{1/2}) - (|Y|, |s|^{1/2})\|_{\ell^4}
		\\& \le |X - Y| + |t -s|^{1/2} = \dist((X,t), (Y,s)).
	\end{align*}
	Consequently,
	\[
	|\psi(X,t)-\psi(Y,s)|
	\le \frac{\theta\|\eta'\|_{\infty}}{R}\dist((X,t),(Y,s)).
	\]
	\[
	\|g_\alpha-g_\beta\|_{\ell^2 \to \ell^2} = 2 \sin\left( \frac{|\alpha-\beta|}{2} \right) \le |\alpha - \beta|.
	\]
	A quick way to see this geometrically is to (without loss of generality) take $X$ to be a unit vector in $\Span\{e_1,e_2\}$ and, identifying $\Span\{e_1,e_2\}$ with $\mathbb{R}^2$, we have that $|g_\alpha X -g_\beta X|$ is just the chord distance between the vector $g_\alpha X$ and a rotation of $g_\alpha X$ with magnitude $|\alpha - \beta|$, which is the length of the arc from $g_\alpha X$ to $g_\beta X$.
	Using the previous estimate and the triangle inequality, we obtain 
	\[
	\begin{aligned}
		|g_{\psi(X,t)}X-g_{\psi(Y,s)}Y\|
		&\le |X-Y|
		+| g_{\psi(X,t)}-g_{\psi(Y,s)}| \min\{|X|,|Y|\} \\
		&\le |X-Y|
		+|\psi(X,t)-\psi(Y,s)| \min\{|X|,|Y|\}.
	\end{aligned}
	\]
	Here, in the first inequality, we have used the estimate
	\begin{align*}
		|g_{\psi(X,t)}X-g_{\psi(Y,s)}Y| &= |g_{\psi(X,t)}X-g_{\psi(Y,s)}X -g_{\psi(Y,s)}X - g_{\psi(Y,s)}Y| 
		\\ & \le |g_{\psi(X,t)}X-g_{\psi(Y,s)}X| + |X - Y| 
		\\& \le \|g_{\psi(X,t)}-g_{\psi(Y,s)}\||X| + |X - Y|
	\end{align*}
	and the analogous estimate where we add and subtract $g_{\psi(X,t)}Y$. Both of these estimates are obtained by the triangle inequality and the fact that $g_\alpha$ is an isometry. 
	If $\psi(X,t) \neq \psi(Y,s)$, then 
	\[\min\{|X|, |Y|\} \le \min\{\rho(X,t), \rho(Y,s)\} \le aR.\]
	Combining the estimates above, we have
	\[
	|g_{\psi(X,t)}X-g_{\psi(Y,s))}Y|
	\le |X -Y| + (a\theta\|\eta'\|_\infty\dist((X,t),(Y,s)).
	\]
	Since $F$ fixes the $t$-coordinate, we have 
	\begin{align*}
		\dist(F(X,t),F(Y,s)) &= |g_{\psi(X,t)}X-g_{\psi(Y,s))}Y| + |t - s|^{1/2}
		\\ & \le (1+a\theta \|\eta'\|_\infty) \dist((X,t),(Y,s))
		\\ & \le (1+2^{-1}a\pi \|\eta'\|_\infty) \dist((X,t),(Y,s)).
	\end{align*}
	This shows that $F$ is parabolic Lipschitz. Next, we show that $F$ is bi-Lipschitz. 
	
	First we observe that $\rho$ is is invariant under $F$. Indeed,
	\[
	\rho(G_\alpha(X,t))
	=\bigl(|g_\alpha X|^4+t^2\bigr)^{1/4}
	=\bigl(|X|^4+t^2\bigr)^{1/4}
	=\rho(X,t),
	\]
	which implies
	\[ \psi(F(X,t))=\psi(X,t).\]
	Define
	\[
	\widetilde F(X,t):=G_{-\psi(X,t)}(X,t)=(g_{-\psi(X,t)}X,t).
	\]
	Then
	\[
	\begin{aligned}
		\widetilde F(F(X,t))
		&=G_{-\psi(F(X,t))}G_{\psi(X,t)}(X,t) \\
		&=G_{-\psi(X,t)}G_{\psi(X,t)}(X,t) \\
		&=(X,t).
	\end{aligned}
	\]
	Similarly,
	\[
	F(\widetilde F(X,t))=(X,t).
	\]
	Thus
	\[
	F^{-1}(X,t)=G_{-\psi(X,t)}(X,t).
	\]
	In particular, $F^{-1}$ is exactly of the same form as $F$ and therefore $F^{-1}$ is parabolic Lipschitz with constant $L \le (1+2^{-1}a\pi \|\eta'\|_\infty)$. This shows that $F$ is a parabolic bi-Lipschitz map.

	The only thing left to show for (4) is that each spatial component is regular parabolic Lipschitz. From now on, we will identify $P_0$ with $\mathbb{R}^n$ and use the $\gamma$-numbers for each component of $F$. Fix $(z,\tau) \in \RR^n$, $\rho > 0$ and $j \in \{1,\dots, n\}$. Recall that we have already shown 
	\[|\nabla F_j| \lesssim \mathbbm{1}_{B(0,2R) \setminus B(0,R)}, \quad |\nabla^2 F_j| \lesssim R^{-1} \mathbbm{1}_{B(0,2R) \setminus B(0,R)}, \quad |\partial_t F_j| \lesssim R^{-1}\mathbbm{1}_{B(0,2R) \setminus B(0,R)}.\]
	For $(x,t) \in \mathbb{R}^n$, we let
	\[L_{x,t}(y,s) = F_j(x,t) + \nabla_x F(x,t)[y-x]\]
	be the spatial Taylor polynomial for $F_j$ about the point $(x,t)$. The derivative estimates above and Talyor's theorem give, for any parabolic ball $\Delta((x,t), r)$ in $\mathbb{R}^n$ with $r \le R$ and $(y,s) \in \Delta((x,t), r)$, that 
	\[|F_j(y,s) - L_{x,t}(y,s)| \lesssim_n \frac{|y-x|^2}{R} + \frac{|s-t|}{R} \lesssim \frac{r^2}{R}.\]
	Therefore,
	\[\gamma((x,t),r) \leq C_n \frac{r}{R}, \quad \forall  r < R.\]
	This gives the estimate
	\begin{equation}\label{eq:cptestup2rhobeta}
		\int_0^{\min\{\rho,R\}} \iint_{\Delta((z,\tau),\rho)} \gamma((x,t),r)^2 r^{-1} \, dxdt \, dr \leq C_n \rho^{n+1}.
	\end{equation}
	
	If $\rho \leq R$, the above estimate gives the desired Carleson estimate for the $\gamma$ numbers in the ball $B((z,\tau),\rho)$. Suppose instead that $\rho > R$. To treat the larger scales, we note that $F_j(x,t) = x_j$ for $(x,t) \not \in B(0,2R)$. Since $F_j(x,t)$ is globally Lip(1,1/2), we have the global estimate
	\[|F_j(x,t) -x_j| \le C_n R.\]
	Indeed, if $(x,t) \in B(0,2R)^c$ the bound holds trivially and, if not, we take any $(z',\tau') \in \partial B(0,2R)$ (where the estimate holds) and use the triangle inequality along with the estimate
	\[|F_j(x,t) - F_j(z',\tau')| \le C_n R.\]
	Thus, for $r > R$, using that $x_i$ is a spatial affine function,
	\[\gamma((x,t),r)^2 \le \frac{1}{r^{n+1}} \iint_{\Delta((x,t),r)} \frac{|F_j(y,s) - y_j|^2}{r^2} \, dy \, ds \leq C_n \frac{R^2}{r^{2}} .\]
	From this we deduce that 
	\begin{equation}\label{eq:cptestbgrrhobeta}
		\int_R^\rho  \iint_{\Delta((z,\tau),\rho)} \gamma((x,t),r)^2 r^{-1} \, dxdt \, dr \leq C_n \rho^{n+1}.
	\end{equation}
	Combining the estimates \eqref{eq:cptestup2rhobeta} and \eqref{eq:cptestbgrrhobeta}, we obtain the Carleson estimate for the $\gamma$ numbers in ball $B((z,\tau),\rho)$ in the case that $\rho > R$. 
\end{proof}

The next lemma is the mechanism that allows us to take a Lipschitz map that looks like one affine map outside of a ball, to another Lipschitz map that looks like a different affine map outside of a (larger) ball. The idea is to use the previous lemma to create a transition region where we spiral from looking like one plane to another.

\begin{lemma}\label{lem:givrotimages}
Suppose $P_0$ and $P_1$ are $t$-independent planes in $\mathbb{R}^{n+1}$ and
\[\Xbf_0 = (X_0,t_0) \in P_0, \quad \Xbf_1 = (X_1,t_1) \in P_1, \quad \text{ and } \quad r_* > 0.\]
Set
\[B_0 = B(\Xbf_0, r_*) \quad \text{ and } \quad B_1 = B(\Xbf_1, r_*).\]
Suppose that $F:P_1 \to \mathbb{R}^{n+1}$ is an $(L,M)$-parabolic regular bi-Lipschitz map such that
\[F(\Xbf) = \Xbf, \quad \forall \Xbf \in P_1 \setminus B_1.\]
Set 
\[\xi_L := 100 \max\{100, L_n(L +1)\},\]
where $L_n$ is from Lemma \ref{lem:givenrot}.
Then, there exists a constant $C_L < \infty$, depending on dimension and $L$, and a map $\widetilde{F}: P_0 \to \mathbb{R}^{n+1}$ with the following properties:
\begin{itemize}
\item[(1)] $\widetilde{F}$ is a $(C_L, C_n(M+1))$-parabolic regular bi-Lipschitz map.
\item[(2)] There exists an affine parabolic isometry $A: P_0 \to P_1$ such that
\[A(\Xbf_0) = \Xbf_1, \quad A(B_0 \cap P_0) = B_1 \cap P_1,\]
and
\[\widetilde{F}(\Xbf) = F(A\Xbf), \quad \forall \Xbf \in B_0 \cap P_0.\]
In particular,
\begin{equation}\label{eq:ftildesamim}
\widetilde{F}(B_0 \cap P_0) = F(B_1 \cap P_1).
\end{equation}
\item[(3)]
\[\widetilde{F}(\Xbf) = \Xbf + (\Xbf_1 - \Xbf_0), \quad \forall \Xbf \in P_0 \setminus B(\Xbf_0, \xi_L r_*).\]
In particular, if $\Xbf_0 = \pi_{P_0}(\Xbf_1)$ is the projection of $\Xbf_1$ onto $P_0$, then
\[\widetilde{F}(\Xbf) = \Xbf + c \vec{n}_{P_0} \quad \forall \Xbf \in P_0 \setminus B(\Xbf_0, \xi_L r_*),\]
where $\vec{n}_{P_0}$ is a normal vector to $P_0$.
\item[(4)] \[\widetilde{F}(B(\Xbf_0, \xi_L r_*)) \subset B(\Xbf_1, \xi_L r_*).\]
\end{itemize}
\end{lemma}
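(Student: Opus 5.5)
The construction is a composition of three maps: a translation, a slow rotation from Lemma \ref{lem:givenrot}, and $F$ itself. Translate so that $\Xbf_0$ and $\Xbf_1$ sit at the origin. Let $P_0' = P_0 - \Xbf_0$ and $P_1' = P_1 - \Xbf_1$; these are $t$-independent planes through the origin. Apply Lemma \ref{lem:givenrot} to $P_0'$ and $P_1'$ with radius $R = \xi_L r_*/4$, which produces a map $G$. On $B(0,R)$, $G$ is a fixed linear map $O$ taking $P_0'$ to $P_1'$; it is a Givens rotation in space and the identity in $t$, hence a parabolic isometry. Outside $B(0,2R)$, $G$ is the identity. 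Define $A(\Xbf) = O(\Xbf - \Xbf_0) + \Xbf_1$. This is an affine parabolic isometry $P_0 \to P_1$ with $A(\Xbf_0)=\Xbf_1$, and since $O$ preserves parabolic balls about $0$ (item (2) of Lemma \ref{lem:givenrot}), $A(B_0\cap P_0)=B_1\cap P_1$. Now set
\[\widetilde F(\Xbf) := F\bigl(G(\Xbf-\Xbf_0)+\Xbf_1\bigr), \qquad \Xbf\in P_0 .\]
The point $G(\Xbf-\Xbf_0)+\Xbf_1$ need not lie in $P_1$ when $|\Xbf-\Xbf_0|\ge R$. To handle this, extend $F$ from $P_1$ to all of $\RR^{n+1}$ by $F(\Ybf)=\Ybf$ off $B_1$ and by $F(\pi_{P_1}\Ybf)+(\Ybf-\pi_{P_1}\Ybf)$ near $B_1$. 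It is simpler, though, to observe that the modified point stays in $P_1$ whenever it lies in $B_1$, since $r_*<R$. So set $\widetilde F(\Xbf) = G(\Xbf-\Xbf_0)+\Xbf_1$ whenever $G(\Xbf-\Xbf_0)\notin B(0,r_*)$, and use $F$ otherwise. This is consistent because $F$ is the identity on $P_1\setminus B_1$.

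With this definition, items (2)--(4) are immediate. For $\Xbf\in B_0\cap P_0$ we have $G=O$, giving (2). Outside $B(\Xbf_0,\xi_L r_*)\supset B(\Xbf_0,2R)$ the map $G$ is the identity, giving (3); the normal-translation form follows because $\Xbf_1-\Xbf_0\perp P_0$ in that case. For (4), $G$ preserves balls about $0$, and $F$ maps $B_1\cap P_1$ into $B(\Xbf_1, Lr_*)\subset B(\Xbf_1,\xi_L r_*)$ because $F$ is $L$-Lipschitz and fixes $\partial B_1\cap P_1$.

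The work is in (1). The Lipschitz bound holds because the map is a composition of Lipschitz pieces, with constant about $L\cdot L_n$. Time acts by translation because both $G$ and $F$ do, so it remains to check the lower bound and regularity. For the \textbf{lower bound}, take $\Xbf,\Ybf$. If both lie in $B(\Xbf_0,R)$, the map equals $F\circ A$ and the bound follows from $F$ being bi-Lipschitz. If both lie outside $B(\Xbf_0, r_*)$ in the rotated picture, the map is the translate of $G$, which is bi-Lipschitz. In the mixed case, one point has image in $F(B_1)\subset B(\Xbf_1,Lr_*)$ and the other lies at distance $\ge R=\xi_Lr_*/4\gg Lr_*$. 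Here the largeness of $\xi_L$ (it dominates $L_n(L+1)$) guarantees separation of images comparable to the distance. For \textbf{regularity}, write $\widetilde F - (\text{translated } G)$ as a perturbation supported in $B_0\cap P_0$ that equals $F\circ A - A$ there. Its spatial components are $(L,M)$-regular in the coordinates of $P_0$, since $A$ is an isometry. The components of $G$ are regular by Lemma \ref{lem:givenrot}(4).

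The main obstacle is showing that the sum remains regular, with constant $C_n(M+1)$. My approach: $F\circ A - A$ vanishes on $\partial B_0$ and is supported in a single ball, so the gluing Lemma \ref{l:gluing-for-regular-functions-prime} (H1) applies with one cube $Q\supset B_0$, $3Q\subset B(\Xbf_0,R)$. Local regularity in $10Q$ comes from $F$'s Carleson bound, and the support condition holds. The Carleson norm of a sum of two regular functions is bounded by twice the sum of the norms, by the triangle inequality for the $\gamma$'s. This yields the constant $C_n(M+L^2+1)$, which is absorbed into $C_L$ and $C_n(M+1)$ as stated, since $L$-dependence is allowed in $C_L$.
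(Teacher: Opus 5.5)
Your construction is the paper's: because your $G$ (the paper's $H_R$) preserves $\dist(\cdot,0)$, your piecewise map is exactly $\widetilde F=\Psi+(F\circ A-A)$ with $\Psi(\Xbf)=\Xbf_1+G(\Xbf-\Xbf_0)$. The paper takes $R=\xi_L r_*/2$ rather than $\xi_L r_*/4$, which is immaterial. Writing $\widetilde F$ as this sum of two Lipschitz maps also gives the upper Lipschitz bound more cleanly than calling it a composition. Your three-case lower bound is the paper's.

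The point to correct is the regularity constant. Routing the perturbation through Lemma \ref{l:gluing-for-regular-functions-prime} (H1) gives a Carleson constant of order $M+L^2+1$. The $L^2$ cannot be absorbed the way you say: in (1), $C_L$ is the \emph{Lipschitz} constant, while the Carleson constant must be $C_n(M+1)$, independent of $L$.

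The gluing lemma is unnecessary anyway. $F\circ A-A$ is already a globally defined function on $P_0$; it vanishes off $B_0\cap P_0$ automatically because $F$ is the identity off $B_1$. Moreover $(A\,\cdot)_j$ is affine in the spatial variables. Hence $\gamma_{\widetilde F_j}\le\gamma_{\Psi_j}+\gamma_{F_j\circ A}$ pointwise, which gives $C_n(M+1)$ directly; this is exactly the paper's argument.

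A smaller slip concerns your inclusion $F(B_1\cap P_1)\subset B(\Xbf_1,Lr_*)$. The justification you give ($L$-Lipschitz plus fixing $\partial B_1\cap P_1$) only yields $B(\Xbf_1,(2L+1)r_*)$; the paper records $(3L+2)r_*$. Since $\xi_L\ge 100(L+1)$, this weaker bound still suffices for (4) and for the mixed case of the lower bound.
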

\begin{proof}
We will employ the smooth rotation map from Lemma \ref{lem:givenrot}. Let 
\[\widetilde{P}_0 = P_0 - \Xbf_0, \quad \widetilde{P}_1 = P_1 - \Xbf_1.\]
Let $G$ be the Givens rotation that takes $\widetilde{P}_0$ to $\widetilde{P}_1$, that is, $G_\theta$ in \eqref{eq:fullrotgiv}, and define $A: P_0 \to P_1$ by
\[A(\Xbf):= \Xbf_1 + G(\Xbf - \Xbf_0).\]
Notice that $A$ is a parabolic isometry and
\begin{equation}\label{eq:Amapsbis}
A(\Xbf_0) = \Xbf_1 \quad \text{ and } \quad A(B_0 \cap P_0) = B_1 \cap P_1.
\end{equation}

We let $H_R$ be the map from Lemma \ref{lem:givenrot} (called $F$ there), with $\widetilde{P}_i$ in place of $P_i$, $i = 0,1$, and $R$ to be chosen in a moment. Recall that the bi-Lipschitz constant of $H_R$ depends at most on dimension, and we called this $L_n$. We set
\[\xi_L := 100 \max\{100, L_n(L +1)\}\]
and 
\[R := \frac{\xi_L}{2}r_*.\]
Then, we define the map
\begin{equation}\label{eq:psideffromHR}
\Psi(\Xbf) = \Xbf_1 + H_R(\Xbf - \Xbf_0).
\end{equation}
By design, 
\begin{equation}\label{eq:psiAinsmallball}
\Psi(\Xbf) = A\Xbf, \quad \forall \Xbf  \in B(\Xbf_0, R)
\end{equation}
and, since $R > 50r_*$,
\[\Psi(\Xbf) = A\Xbf, \quad \forall \Xbf \in B_0 \cap P_0.\]
Also,
\begin{equation}\label{eq:psioutsidebb}
\Psi(\Xbf) = \Xbf + (\Xbf_1 - \Xbf_0), \quad \forall \Xbf \in B(\Xbf_0, 2R)^c = B(\Xbf_0, \xi_L r_*)^c. 
\end{equation}
Moreover, $\Psi$ is still $L_n$-bi-Lipschitz. We define our map $\widetilde{F}: P_0 \to \mathbb{R}^{n+1}$ now as
\begin{equation}\label{eq:tildefdef}
\widetilde{F}(\Xbf) = \Psi(\Xbf) + F(A\Xbf) - A\Xbf.
\end{equation}
We also define
\begin{equation}\label{eq:phidefinftild}
\Phi(\Xbf) = F(A\Xbf) - A\Xbf.
\end{equation}
Since $\Xbf \not\in B_0 \cap P_0$ implies $A\Xbf \not\in B_1 \cap P_1$, and $F(\Ybf) = \Ybf$ for $\Ybf \not \in B_1 \cap P_1$ (by assumption), we note that $\Phi$ is supported in $B_0 \cap P_0$. Since $\Phi(\Xbf) = 0$ outside of $B_0 \cap P_0$, using \eqref{eq:psioutsidebb}, we have
\begin{equation}\label{eq:tildefeventflat}
\widetilde{F}(\Xbf) = \Psi(\Xbf) = \Xbf + (\Xbf_1 - \Xbf_0), \quad \forall \Xbf \in P_0 \setminus B(\Xbf_0, \xi_L r_*),
\end{equation}
which is property (3). Also, since, as we have observed above, $\Psi(\Xbf) = A\Xbf$ on $B_0 \cap P_0$, it holds that
\[\widetilde{F}(\Xbf) = F(A\Xbf), \quad \forall \Xbf \in B_0 \cap P_0.\]
Considering \eqref{eq:Amapsbis}, we conclude that (2) holds. Also, we can verify that 
\[\widetilde{F}_{n+1}(X,t) = t - (t_1 -t_0),\]
where $t_i$ are the time coordinates of $\Xbf_i$, $i = 0,1$. To see this, we recall the definition of $F$, \eqref{eq:tildefdef}, all three maps $\Psi, F$ and $A$ involved in the formula act on the time variable (at most) by translation, so it must be $\widetilde{F}$ does as well, and \eqref{eq:tildefeventflat} forces the translation to be exactly $t - (t_1 -t_0)$. Thus, to prove (1), we need to show that the (overall) map is bi-Lipschitz and prove the components are regular.

We start by showing that $\widetilde{F}$ is bi-Lipschitz. Let $\Xbf, \Ybf \in P_0$. We divide into three cases, two of which are easy to handle. First, suppose $\Xbf, \Ybf \in B(\Xbf_0, R)$. In this case, we have by \eqref{eq:psiAinsmallball}, \eqref{eq:tildefdef} and the definition of $F$, that 
\[\widetilde{F}(\Xbf) = F(A\Xbf), \quad \widetilde{F}(\Ybf) = F(A\Ybf).\]
Since $A$ is a parabolic isometry, we have $\dist(F(A\Xbf), F(A\Ybf))  = \dist(F(\Xbf), F(\Ybf))$, so the fact that $F$ is $L$-bi-Lipschitz gives
\[L^{-1} \dist(\Xbf, \Ybf) \le \dist(\widetilde{F}(\Xbf), \widetilde{F}(\Ybf)) \le L \dist(\Xbf, \Ybf), \]
which is the desired estimate. 
Now suppose that $\Xbf, \Ybf \in B(\Xbf_0, r_*)^c = B_0^c$. Recall the definition of $\Phi$, \eqref{eq:phidefinftild} and our observation that $\Phi(\Xbf) = \Phi(\Ybf) = 0$ for $B_0^c$. Thus, 
\[\widetilde{F}(\Xbf) = \Psi(\Xbf), \quad \widetilde{F}(\Ybf) = \Psi(\Ybf),\]
so that, by the $L_n$-bi-Lipschitz property of $\Psi$, we have 
\[L_n^{-1} \dist(\Xbf, \Ybf) \le \dist(\widetilde{F}(\Xbf), \widetilde{F}(\Ybf)) \le L_n \dist(\Xbf, \Ybf).\]

The remaining case is when $\Xbf \in B_0$ and $\Ybf \in B(\Xbf_0, R)^c$. In this case, we let $\Zbf \in \partial B(\Xbf_1, 2r_*) \cap P_1$ be arbitrary. By \eqref{eq:Amapsbis}, we have $A\Xbf \in B(\Xbf_1, r_*) \cap P_1$, so that
\[\dist(A\Xbf, \Zbf) \le 3r_*, \quad \dist(\Zbf, \Xbf_1) = 2r_*.\]
Then, $F(\Zbf) = \Zbf$, $F$ is $L$-Lipschitz and, as we observed in the first case, $\widetilde{F}(\Xbf) = F(A\Xbf)$, so that
\begin{equation}\label{eq:dagftildlem}
\begin{aligned}
\dist(\widetilde{F}(\Xbf), \Xbf_1) &= \dist(F(A\Xbf), \Xbf_1) \le  \dist(F(A\Xbf), F(\Zbf)) +  \dist(\Zbf,  \Xbf_1) \\
& \le L\dist(A\Xbf, \Zbf) + 2r_* \le (3L + 2)r_*.
\end{aligned}
\end{equation} 
On the other hand, as we have observed in the second case above, $\widetilde{F}(\Ybf) = \Psi(\Ybf)$ and, by definition, $\Psi(\Xbf_0) = \Psi(\Xbf_1)$, so that
\[\dist(\widetilde{F}(\Ybf), \Xbf_1) = \dist(\Psi(\Ybf), \Psi(\Xbf_0)).\]
Since $\Psi$ is $L_n$-bi-Lipschitz, this gives 
\begin{equation}\label{eq:dbldagftildlem}
L_n^{-1} \dist(\Ybf, \Xbf_0) \le   \dist(\widetilde{F}(\Ybf), \Xbf_1) \le L_n \dist(\Ybf, \Xbf_0).
\end{equation}
Combining \eqref{eq:dagftildlem} and \eqref{eq:dbldagftildlem}, and using the triangle inequality, we obtain two estimates
\begin{equation}\label{eq:starftildlem}
\dist(\widetilde{F}(\Xbf), \widetilde{F}(\Ybf)) \ge L_n^{-1}\dist(\Ybf, \Xbf_0) - (3L + 2)r_*
\end{equation}
and 
\begin{equation}\label{eq:heartftildlem}
\dist(\widetilde{F}(\Xbf), \widetilde{F}(\Ybf)) \le L_n \dist(\Ybf, \Xbf_0) + (3L + 2)r_*.
\end{equation}
Also, the choice of $R = \xi_L/2  \ge 50r_*$, along with the fact that $\Xbf \in B_0 = B(\Xbf_0, r_*)$ and $\Ybf \in B(\Xbf_0, R)$, gives
\begin{equation}\label{eq:obvcompctr}
(1/2)\dist(\Xbf, \Ybf) \le \dist(\Xbf_0, \Ybf) \le \dist(\Xbf, \Ybf).
\end{equation}
Moreover, the choice of $\xi_L$ gives
\[\dist(\Ybf, \Xbf_0) \ge R = \tfrac{\xi_L}{2}r_* \ge 50(L + 1)L_n r_*.\]
Using this estimate in conjunction with \eqref{eq:starftildlem} gives 
\begin{align*}
\dist(\widetilde{F}(\Xbf), \widetilde{F}(\Ybf)) &\ge (1/2)L_n^{-1}\dist(\Ybf, \Xbf_0) + (1/2) L_n^{-1}\dist(\Ybf, \Xbf_0) - (3L + 2)r_* \\
&\ge (1/2)L_n^{-1}\dist(\Ybf, \Xbf_0) \ge_{\eqref{eq:obvcompctr}} (1/4)L_n^{-1}\dist(\Ybf, \Xbf).
\end{align*}
Since the choice of $\xi_L$ gives $(3L + 2)r_* \le R \le \dist(\Ybf, \Xbf_0)$, \eqref{eq:heartftildlem} gives,
\[\dist(\widetilde{F}(\Xbf), \widetilde{F}(\Ybf)) \le (L_n + 1) \dist(\Ybf, \Xbf_0) \le_{\eqref{eq:obvcompctr}} 2(L_n + 1)\dist(\Xbf, \Ybf), \]
which completes the bi-Lipschitz bound in this final case. Altogether, we have found that $\widetilde{F}$ is bi-Lipschitz with constant at most $\max\{L, 4L_n, 2L_n + 2\}$. 

We now check that the spatial components of $\widetilde{F}$ are regular. To simplify notation, we will identify $P_0$ with $\mathbb{R}^n$.
We note that, by \eqref{eq:tildefdef},
\[\widetilde{F}_j(\Xbf) = \Psi_j(\Xbf) + F_j(A\Xbf) - (A\Xbf)_j\]
for $j = 1,\dots n$ and $\Xbf \in \mathbb{R}^n$. The triangle inequality and the fact that $A$ is a parabolic isometry gives 
\begin{align*}
\gamma_{\widetilde{F}_j}((x,t),r) &\le \gamma_{\Psi_j}((x,t),r) + \gamma_{F_j(A\cdot)}((x,t),r) + \gamma_{(A\cdot)_j}((x,t),r) 
\\ &=   \gamma_{\Psi_j}((x,t),r) + \gamma_{F_j}((x,t),r).
\end{align*}
Here, $\gamma_{G}$ is the $\gamma$ coefficient for the function $G$. Now, we know that $F_j$ is $M$-regular by assumption, that is,
\[\gamma_{F_j}((x,t),r)^2\frac{dx \, dt\, dr}{r}\]
is a Carleson measure.
Moreover, by \eqref{eq:psideffromHR},
\[\gamma_{\Psi_j}((x,t),r) = \gamma_{(H_R)_j}((x - x_0,t - t_0),r),\]
where $H_R$ is the smooth rotation provided by Lemma \ref{lem:givenrot}. Since Lemma \ref{lem:givenrot} gives that $H_R$ is a regular parabolic bi-Lipschitz map, we have that
\[ \gamma_{(H_R)_j}((x,t),r)^2 \frac{dx \, dt\, dr}{r}\]
is a Carleson measure. Combining these estimates, it holds that
\[\gamma_{\widetilde{F}_j}((x,t),r)^2\frac{dx \, dt\, dr}{r} \]
is a Carleson measure, which completes the proof of the lemma aside from (4).

It remains to show that 
\[\widetilde{F}(B(\Xbf_0, \xi_L r_*) \cap P_0) \subset B(\Xbf_1, \xi_L r_*).\]
We have essentially seen this above, but we reproduce it here.  We first note that, if $\Xbf \not \in B_0$, then
\[\widetilde{F}(\Xbf) = \Psi(\Xbf) = \Xbf_1 + H_R(\Xbf - \Xbf_0).\]
On the other hand, the last property of $H_R$ in Lemma \ref{lem:givenrot} says $H_R(B(0,r)) = B(0,r)$ for all $r > 0$ and therefore
\[\Psi(B(\Xbf_0,r) \cap P_0) \subset B(X_1,r), \quad \forall r > 0.\]
Therefore, we only need to consider the case that $\Xbf \in B_0$. In this case, we have $\widetilde{F}(\Xbf) = F(A\Xbf)$. Let $\Zbf \in \partial B(\Xbf_1, 2r*) \cap P_1$ be arbitrary. Since $\Zbf \not \in B_1$, we have 
\[F(\Zbf) = \Zbf\]
and 
\[\dist(A\Xbf, \Zbf) = \dist(A\Xbf, \Xbf_1) + \dist(\Xbf_1, \Zbf) \le 3r_*.\]
Using that $F$ is $L$-Lipschitz, this gives 
\begin{align*}
\dist(\widetilde{F}(\Xbf), \Xbf_1) &\le \dist(F(A\Xbf), \Xbf_1)  \le \dist(F(A\Xbf), F(\Zbf) )  + \dist(\Zbf, \Xbf_1) 
\\ & \le L \dist(A\Xbf, \Zbf) + \dist(\Zbf, \Xbf_1) \le (3L + 2)r_* \le \xi_L r_*.
\end{align*}
This shows (4) and completes the proof of the lemma.
\end{proof}

\subsection{Gluing Eventually Flat Lipschitz Images Together}

We are going to want to glue several Lipschitz images together which look like planes outside of a ball. After applying the previous lemma to rotate them to be parallel, we will tie them together using a regular Lip(1,1/2) graph afforded by the corona decomposition. The following lemma is useful in this regard.

\begin{lemma}\label{lem:constreplem}
	Let $L,M,K \geq 0$. Suppose that $B^*$ is a fixed parabolic ball in $n$-dimensional space time $\mathbb{R}^n$ and $\{Q_i\}$ is a finite collection of parabolic cubes in $\mathbb{R}^n$ such that  
	\begin{align}
		4Q_i \subseteq B^* \quad \forall i 
	\end{align} 
	and
	\begin{equation}\label{eq:sepcubesprop}
		d_\infty(Q_i, Q_j) \ge 10^5\max\{\diam_\infty(Q_i), \diam_\infty(Q_j)), \quad \forall i \neq j,
	\end{equation}
	where $\diam_\infty$ is the diameter measured in $d_\infty$. Suppose further that $\theta \colon \mathbb{R}^n \to \mathbb{R}$ is an $(L,M)$-regular Lip(1,1/2) function with support in $B^*$ such that, for every $i$ there exists $c_i^*$ such that  
		\begin{equation}\label{eq:closeness}
			|\theta(x,t) - c_i^*| \le K \ell(Q_i), \quad \forall (x,t) \in 50Q_i.
		\end{equation}
		Then, there exists a $(C_n(L+K),C_n(M+L^2+K^2))$-regular parabolic Lipschitz function $g$ with support in $B^*$ such that 
		\[g(x,t) = c_i^*, \quad \forall (x,t) \in Q_i.\]
\end{lemma}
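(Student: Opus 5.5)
The plan is to modify $\theta$ near each $Q_i$ by a localized correction that flattens it to the constant $c_i^*$ on $Q_i$, and then verify regularity with the gluing Lemma \ref{l:gluing-for-regular-functions-prime} under hypothesis (H2). For each $i$, fix a smooth cutoff $\eta_i$ with $0\le \eta_i\le 1$, $\eta_i\equiv 1$ on $Q_i$, $\supp(\eta_i)\subseteq 3Q_i$, and parabolic derivative bounds $|\nabla_x^\alpha\partial_t^m\eta_i|\le C_{\alpha,m}\ell(Q_i)^{-|\alpha|-2m}$. Define
\[
g \coloneqq \theta - \sum_i \eta_i\,(\theta - c_i^*) = \theta + \sum_i h_i, \qquad h_i\coloneqq \eta_i\theta_i,\quad \theta_i\coloneqq c_i^*-\theta .
\]
On $Q_i$ only the $i$-th term is nonzero by \eqref{eq:sepcubesprop}, and since $\eta_i\equiv1$ there, $g=c_i^*$ on $Q_i$. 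For the support claim, $\supp(h_i)\subseteq 3Q_i\subseteq 4Q_i\subseteq B^*$ and $\supp(\theta)\subseteq B^*$, so $\supp(g)\subseteq B^*$.

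It remains to check that $\sum_i h_i$ is regular, and the hypotheses of Lemma \ref{l:gluing-for-regular-functions-prime} (H2) are exactly what is available. The separation \eqref{e:separation-gluing-prime} is \eqref{eq:sepcubesprop}, and the $\eta_i$ have the required support and derivative bounds. Each $\theta_i=c_i^*-\theta$ differs from $-\theta$ by a constant, so it is $(L,M)$-regular Lip(1,1/2): $\gamma$ numbers are unchanged by adding constants, and the Lipschitz constant is unchanged. Finally, \eqref{eq:closeness} gives $|\theta_i|\le K\ell(Q_i)$ on $50Q_i\supseteq 10Q_i$. Lemma \ref{l:gluing-for-regular-functions-prime} then yields that $\sum_i h_i$ is $(C_n(L+K),C_n(M+L^2+K^2))$-regular.

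Since the Lip(1,1/2) seminorm and the Carleson norm of $\gamma^2$ are subadditive up to constants, the sum $g=\theta+\sum_i h_i$ is regular with constants $(C_n(L+K),C_n(M+L^2+K^2))$. For the Carleson norm, use $\gamma_{a+b}\le\gamma_a+\gamma_b$ and $(x+y)^2\le 2x^2+2y^2$.

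The only step that needs some care is the (H2) bookkeeping, in particular confirming that the lemma's proof only uses $|\theta_i|\le K\ell(Q_i)$ on $10Q_i$, which is within the $50Q_i$ we are given. The construction itself is straightforward, since the finiteness of the family and the support condition $4Q_i\subseteq B^*$ remove any issues at infinity.
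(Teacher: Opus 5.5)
Your proposal is correct and matches the paper's proof essentially verbatim. Both define the same correction $h=\sum_i \eta_i(c_i^*-\theta)$, verify hypothesis (H2) of Lemma \ref{l:gluing-for-regular-functions-prime} with $\theta_i=\pm(\theta-c_i^*)$, and set $g=\theta+h$. The only differences are cosmetic: your cutoffs have $\supp(\eta_i)\subseteq 3Q_i$ rather than $2Q_i$, and you write out the support claim and the subadditivity of the Lipschitz and Carleson constants, which the paper leaves implicit.
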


\begin{proof}
	For each $i$, we let \(\eta_i\in C_0^\infty(2 Q_i)\) be a cutoff with
	\begin{equation}\label{eq:eta}
		0\le \eta_i\le 1,
		\qquad
		\eta_i\equiv 1 \text{ on } Q_i,
	\end{equation}
	and parabolic derivative bounds
	\begin{equation}\label{eq:eta-derivatives}
		|\nabla_x^\alpha \partial_t^m\eta_i|
		\le C_{\alpha,m} \ell(Q_i)^{-|\alpha|-2m}.
	\end{equation}
	We define 
	\[h(\xbf) = \sum_i \eta_i(\xbf)(c_i^* - \theta(\xbf)).\] 
	Our goal is to show that 
	\begin{align}\label{e:regularity-of-h}
		\mbox{$h$ is $(C_n(L+K),C_n(M+L^2+K^2))$-regular parabolic Lipschitz.}
	\end{align}
	Then, we can simply let 
	\[g = \theta + h\]
	to prove Lemma \ref{lem:constreplem}. Property \eqref{e:regularity-of-h} follows directly from Lemma \ref{l:gluing-for-regular-functions-prime} once we check the hypothesis (H2) there. The separation condition \eqref{e:separation-gluing-prime} is exactly \eqref{eq:sepcubesprop}. The collection $\{\eta_i\}$ satisfy the conditions of Lemma \ref{l:gluing-for-regular-functions-prime} (H2) by definition. Finally, defining $\theta_i \coloneqq \theta - c_i^*$, the collection $\{\theta_i\}$ satisfies the conditions of Lemma \ref{l:gluing-for-regular-functions-prime} (H2) by \eqref{eq:closeness} and the fact that $\theta$ is regular Lip(1,1/2) (shifts by constant functions do not change this condition). 
	\end{proof}

\begin{lemma}\label{lem:flatpatching}
Let $L \ge 1$ and $M \ge 0$. Suppose that $\{Q_i\}$ is a finite collection of (closed) parabolic cubes in $\mathbb{R}^n$ with the property that 
\begin{equation}\label{eq.sepforflatpatch}
d_\infty(Q_i,Q_j) \ge 10^5 \max \{ \diam_\infty(Q_i), \diam_\infty(Q_j)\}= 10^5 \max \{\ell(Q_i), \ell(Q_j)\}.
\end{equation}
Suppose further that $g$ and $\{h_i\}$ are $(L,M)$-regular Lip(1,1/2) functions with
\begin{equation}\label{eq:higaggreepatchlem}
h_i = g, \quad \text{ on } 10Q_i \setminus Q_i, 
\end{equation}
and define
\[f(\Xbf) = \begin{cases}
h_i(\Xbf), & \text{ if } \Xbf \in Q_i \\
g(\Xbf) & \text{ if } \Xbf \in (\cup_i Q_i)^c.
\end{cases}
\]
Then $f$ is a $(C_nL,C_n(M+L^2))$-regular Lip(1,1/2) function.
\end{lemma}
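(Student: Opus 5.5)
The plan is to write $f = g + \sum_i (h_i - g)\mathbbm{1}_{Q_i}$ and then apply Lemma \ref{l:gluing-for-regular-functions-prime} under hypothesis (H1). Set $\phi_i := (h_i - g)\mathbbm{1}_{Q_i}$. Since $h_i = g$ on $10Q_i\setminus Q_i$, we have $\phi_i = (h_i - g)\mathbbm{1}_{10Q_i}$. In particular $\phi_i$ agrees with the continuous function $h_i - g$ on $10Q_i$ and vanishes on $10Q_i\setminus Q_i$. Hence $\phi_i$ is a genuinely continuous function supported in $Q_i \subseteq 3Q_i$, and $f = g + \sum_i \phi_i$ pointwise.

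The first step is to check that each $\phi_i$ is parabolic $2L$-Lipschitz. Take $\Xbf \in Q_i$ and $\Ybf \notin Q_i$. If $\Ybf \in 10Q_i$, then $\phi_i = h_i-g$ at both points, and $h_i - g$ is $2L$-Lipschitz. If $\Ybf \notin 10Q_i$, choose a point $\Zbf$ on the segment between them (in $d_\infty$ geometry, any point of $10Q_i\setminus Q_i$ with $\dist(\Xbf,\Zbf)\lesssim \dist(\Xbf,\Ybf)$ will do). Then $|\phi_i(\Xbf)-\phi_i(\Ybf)| = |\phi_i(\Xbf) - \phi_i(\Zbf)| \le 2L\dist(\Xbf,\Zbf) \lesssim L\dist(\Xbf,\Ybf)$.

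The second step is local regularity of $\phi_i$ in $10Q_i$. Here $\phi_i = h_i - g$ on $10Q_i$, so for any $Q\subseteq 4Q_1$ with $6Q_1 \subseteq 10Q_i$ we have $3Q \subseteq 10Q_i$. Therefore $\gamma_{\phi_i}(3Q) \le \gamma_{h_i}(3Q) + \gamma_g(3Q)$, and the Carleson sums in \eqref{e:loca-regular-lipschitz} are bounded by $2(M+M)\ell(Q_1)^{n+1}$. Thus $\phi_i$ is $(2L,4M)$-regular in $10Q_i$.

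The separation \eqref{eq.sepforflatpatch} is exactly \eqref{e:separation-gluing-prime}. Lemma \ref{l:gluing-for-regular-functions-prime} (H1) then shows that $\sum_i \phi_i$ is $(C_nL, C_n(M+L^2))$-regular. Adding $g$ and using the triangle inequality for $\gamma$ numbers (so $\gamma^2$ sums at most double) gives the claim for $f$, with constants of the stated form.

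The main obstacle I expect is the Lipschitz bound across $\partial(10Q_i)$ when $\Ybf$ lies far away. This bound needs $h_i - g$ to vanish on the whole annulus $10Q_i\setminus Q_i$, and it uses that the annulus has width comparable to $\ell(Q_i)$. So any path from $Q_i$ to the outside of $10Q_i$ must pass through a region where $\phi_i = 0$, and the bound $|\phi_i| \le 2L\dist(\cdot, (Q_i)^c)$ together with the triangle inequality then settles it. If the intermediate point argument gets messy, I would instead use directly that $|\phi_i(\Xbf)| \le 2L \dist(\Xbf, 10Q_i\setminus Q_i) \le 2L\dist(\Xbf,\Ybf)$ when $\Ybf\notin 10Q_i$.
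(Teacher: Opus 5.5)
Your proposal is correct and is essentially the paper's proof. Both use the decomposition $f = g + \sum_i \varphi_i$ with $\varphi_i = h_i - g$ on $Q_i$ and $0$ elsewhere, check that $\varphi_i$ is $2L$-Lipschitz by passing through a point where $h_i - g$ already vanishes, get local regularity on $10Q_i$ from $\gamma_{\varphi_i} \le \gamma_{h_i} + \gamma_g$, and conclude with Lemma \ref{l:gluing-for-regular-functions-prime} under (H1).

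One small point that you, like the paper, pass over: the boundedly many dyadic $Q \subseteq 4Q_1$ with $\ell(Q) > \ell(Q_1)$ need not satisfy $3Q \subseteq 10Q_i$. These cubes contribute only $O(L^2 \ell(Q_1)^{n+1})$ by the trivial bound $\gamma_{\varphi_i}(3Q) \lesssim L$, which is absorbed into the constant $C_n(M+L^2)$.
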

\begin{proof}
Define
\[\varphi_i(\Xbf) = \begin{cases}
h_i(\Xbf) - g(\Xbf) & \text { if } \Xbf \in Q_i \\
0 & \text{ if } \Xbf \in Q_i^c.
\end{cases}\]
Note that, by hypothesis,
\begin{align}\label{e:phi-i}
	\varphi_i = h_i - g, \quad \text{ on } 10Q_i.
\end{align}
Then 
\[f = g + \sum_i \varphi_i\]
where, since the sum is finite, there are no issues of convergence. Thus, it suffices to show that 
\begin{align}\label{e:varphi-regular}
	\varphi \coloneqq \sum_i \varphi_i \mbox{ is } (C_nL , C_n(M+L^2))\mbox{-regular parabolic Lipschitz.} 
\end{align}
To prove this, we plan to apply Lemma \ref{l:gluing-for-regular-functions-prime}, for which we will check the hypothesis (H1). To start with, note that
\begin{align}\label{e:lip-phi}
	\mbox{$\varphi_i$ is $2L$ Lipschitz} \quad \mbox{and} \quad \supp(\varphi_i) \subseteq Q_i. 
\end{align}
Indeed, the second property follows directly from \eqref{e:phi-i}. For the first property, if $\Xbf, \Ybf \in Q_i$, then 
\[|\varphi_i(\Xbf) - \varphi_i(\Ybf)| \le |h_i(\Xbf) - h_i(\Ybf)| + |g(\Xbf) - g(\Ybf)| \le 2L\dist(\Xbf, \Ybf)\]
and, if $\Xbf, \Ybf \in (Q_i)^c$, then $\varphi_i(\Xbf) = \varphi_i(\Ybf) = 0$. Finally, in the case that $\Xbf \in Q_i$ and $\Ybf \in Q_i$, let $\Zbf \in \partial Q_i$ lie on the line segment from $\Xbf$ to $\Ybf$, so that
\[|\varphi_i(\Xbf) - \varphi_i(\Ybf)| = |\varphi_i(\Xbf)| \le |\varphi_i(\Xbf) - \varphi_i(\Zbf)| \le 2L\dist(\Xbf, \Zbf) \le 2L\dist(\Xbf, \Ybf).\]
Finally, for any parabolic cubes $Q,R_1$ such that $Q \subseteq 4R_1$ and $R_1 \subseteq 6Q_i$, it follows from the triangle inequality that $3Q \subseteq 10Q_i$. Since $\phi_i$ is the sum of two $(L,M)$-regular functions on $10Q_i$, we have that \eqref{e:loca-regular-lipschitz} holds for the cube $R_1$. This and \eqref{e:lip-phi} imply 
\begin{align}
	\varphi_i \mbox{ is } (2L,2M)\mbox{-regular parabolic Lipschitz on } 10Q_i
\end{align}
(recall Definition \ref{d:local-regular-lipschitz}). 

It follows from the above observations and \eqref{eq.sepforflatpatch} that the cubes $\{Q_i\}$ and the function $\{\varphi_i\}$ satisfy the hypotheses of Lemma \ref{l:gluing-for-regular-functions-prime} (H1) with parameters $``(L,M)"$ equal to $(2L,2M)$. Property \eqref{e:varphi-regular} now follows immediately. 
\end{proof}

Now we are ready to prove the Main Lemma below. 

\begin{lemma}[Main Lemma]\label{lem:main}
Let $\Sigma \subset \mathbb{R}^{n+1}$ be a (parabolic) ADR set. Let $M, M_\theta > 0$ and $L,\tilde{\kappa} > 1$. Then there exists constants $\kappa^*,N,L_* > 1$ and $M_* > 0$, depending on $L$, $\tilde{\kappa}$, $M$, $M_\theta$, dimension, and the ADR constant, such that the following holds.

Suppose that $Q_0 \in \mathbb{D}(\Sigma)$ and that $\{Q_i\}$ is a finite, disjoint collection dyadic proper\footnote{Meaning $Q_i \neq Q_0$.} subcubes of $Q_0$  with the property that
\begin{equation}\label{eq:sepinmainlem}
\dist(Q_i, Q_j) \ge N \max\{\diam(Q_i), \diam(Q_j)\}.
\end{equation}
Suppose, for each $Q_i$, that there exists a $t$-independent plane $P_{Q_i}$ and a mapping $F_{Q_i}: P_{Q_i} \to \mathbb{R}^{n+1}$ with the following properties:
\begin{enumerate}
\item[(i)] $\dist((X_{Q_i},t_{Q_i}), P_{Q_i}) < \tilde{\kappa}\diam(Q_i)$.
\item[(ii)] $F_{Q_i}$ is a $(L,M)$-regular parabolic bi-Lipschitz map of $n$-dimensional space-time $\mathbb{R}^n$ (upon identifying $P_{Q_i}$ with $\mathbb{R}^n$).
\item[(iii)] If $\Xbf_{i,1} = \pi_{P_{Q_i}}(X_{Q_i},t_{Q_i})$ is the projection of the `center' of $Q_i$ onto $P_{Q_i}$, then 
\begin{enumerate}
\item[(a)] $F_{Q_i}$ is the identity in $B({\Xbf}_{i,1}, \tilde{\kappa}\diam(Q_i))^c \cap P_{Q_i}$,
\item[(b)] $F_{Q_i}(B(\Xbf_{i,1}, \tilde{\kappa}\diam(Q_i))) \subseteq B(\Xbf_{i,1}, \tilde{\kappa}\diam(Q_i))$. 
\end{enumerate}
\end{enumerate}
Suppose further that there exists a $t$-independent plane $P_0 = P_{Q_0}$ and an $(1, M_\theta)$-regular function $\theta$ on $P_0$ such that the graph of $\theta$ (in the coordinates $P_0^\perp \times P_0$, written $(\theta(x,t), x,t)$), which we denote by $\Gamma$, satisfies 
\begin{equation}\label{eq:closegraphcoronalem}
\sup_{(X,t)\in 10 Q} \dist(X,t,\Gamma )\, + \,  \sup_{(Y,s)\in B((X_{Q},t_{Q}),10 \diam(Q)) \cap \Gamma} \dist(Y,s, \Sigma) \,
\leq\, \diam(Q),
\end{equation}
for all $Q \in \{Q_i^*\}_i \cup Q_0$, where $Q_i^*$ is the dyadic grandparent of $Q_i$.
Further suppose that 
\begin{equation}\label{eq:closeplanecoronalem} 
	P_0 \cap B((X_{Q_0},t_{Q_0}),\tilde{\kappa} \diam({Q_0})) \neq \emptyset
\end{equation}
and
\begin{equation}\label{eq:supportgraphlem1}
\supp(\theta) \subseteq P_0 \cap B((X_{Q_0},t_{Q_0}),\tilde{\kappa} \diam({Q_0})).
\end{equation}

Then there exists a map $ F^* :P_0 \to \mathbb{R}^{n+1}$ such that the following holds:
\begin{itemize}
\item[(i)$^*$] $\dist((X_{Q_0},t_{Q_0}), P_0) < \kappa^*\diam(Q_0)$. 
\item[(ii)$^*$]  $F^*$ is an $(L_*, M_*)$-regular parabolic bi-Lipschitz map.
\item[(iii)$^*$] If $\Xbf_{1} = \pi_{P_0}(X_{Q_0},t_{Q_0})$ is the projection of the `center' of $Q_0$ onto $P_0$, then 
\begin{enumerate}
\item[(a)] $F^*$ is the identity in $B({\Xbf}_1, \kappa^*\diam(Q_0))^c \cap P_0,$
\item[(b)] $F^*(B(\Xbf_{1}, \kappa^*\diam(Q_0))\cap P_0) \subseteq B(\Xbf_1, \kappa^*\diam(Q_0))$. 
\end{enumerate}
\item[(iv)$^*$] \[\cup_i F_{Q_i}(B({\Xbf}_{i,1}, \tilde{\kappa}\diam(Q_i)) )\cap P_{Q_i})) \subset F^*(B(\Xbf_1, \kappa^*\diam(Q_0) \cap P_0).\]
\end{itemize}

\end{lemma}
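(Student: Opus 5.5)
The map $F^*$ is built in three stages: move each small image so that it sits inside the graph $\Gamma$, flatten $\theta$ to a constant near each $Q_i$, and then splice the pieces together with Lemma \ref{lem:flatpatching}.

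\emph{Stage 1: reduce each $F_{Q_i}$ to a map defined on a plane parallel to $P_0$.} For each $i$, I would apply Lemma \ref{lem:givrotimages} with the following data:
\begin{itemize}
\item $P_1 = P_{Q_i}$, $\Xbf_1 = \Xbf_{i,1}$ and $r_* = \tilde\kappa\diam(Q_i)$;
\item $F = F_{Q_i}$, which is the identity off $B_1$ by (iii)(a);
\item as the domain plane, the translate $P_0^{(i)} := P_0 + c_i\vec n_{P_0}$, with $\Xbf_0 = \pi_{P_0^{(i)}}(\Xbf_{i,1})$.
\end{itemize}
The constant $c_i$ is chosen so that $P_0^{(i)}$ passes through the point of $\Gamma$ nearest $\Xbf_{Q_i}$.

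This produces a map $\widetilde F_i \colon P_0^{(i)}\to\RR^{n+1}$ with three useful features:
\begin{itemize}
\item it is $(C_L, C_n(M+1))$-regular bi-Lipschitz;
\item it reproduces $F_{Q_i}(B(\Xbf_{i,1},\tilde\kappa\diam Q_i)\cap P_{Q_i})$;
\item off $B(\Xbf_0,\xi_L r_*)$ it equals $\Xbf\mapsto \Xbf + c\,\vec n_{P_0}$.
\end{itemize}
By (i) and \eqref{eq:closegraphcoronalem} applied at $Q_i^*$, we have $|c|\lesssim \tilde\kappa\diam(Q_i)$. In coordinates $P_0^\perp\times P_0$, $\widetilde F_i$ therefore reads $(x,t)\mapsto(\tilde\theta_i(x,t), \tilde y_i(x,t), t+\tau_i)$. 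Here the normal component $\tilde\theta_i$ equals a constant $c_i^*$ outside a cube $\widetilde Q_i\subset P_0$ of side $\approx \xi_L\tilde\kappa\diam(Q_i)$ centred at $\pi_{P_0}(\Xbf_{Q_i})$, and the tangential components equal a translate of the identity there.

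\emph{Stage 2: flatten $\theta$ near each $\widetilde Q_i$.} Provided $N \gg \xi_L\tilde\kappa$, the separation \eqref{eq:sepinmainlem} together with the graph approximation gives \eqref{eq:sepcubesprop} for the projected cubes $\widetilde Q_i$. Since $\theta$ is $1$-Lipschitz and $\Gamma$ passes within $\diam(Q_i^*)$ of $\Xbf_{Q_i}$, we also get $|\theta - c_i^*|\lesssim \ell(\widetilde Q_i)$ on $50\widetilde Q_i$, after matching $c_i^*$ with the choice of $c_i$ in Stage 1. Lemma \ref{lem:constreplem}, applied with $B^*$ a ball of radius $\approx\tilde\kappa\diam Q_0$ containing $\supp\theta$, then yields a regular $g$ with $g\equiv c_i^*$ on each $\widetilde Q_i$ (enlarged by the factor $10$ needed below) and $\supp g\subset B^*$.

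\emph{Stage 3: define $F^*$ and check its properties.} Define
\[
F^*(x,t) = \bigl(f(x,t),\,\Phi(x,t)\bigr), \qquad f = \text{patched normal component}, \quad \Phi = \text{patched tangential components}.
\]
Here $f$ equals $\tilde\theta_i$ on $\widetilde Q_i$ and $g$ elsewhere, and $\Phi$ equals the tangential components of $\widetilde F_i$ on $\widetilde Q_i$ and $(x,t)$ elsewhere. The pieces agree on $10\widetilde Q_i\setminus\widetilde Q_i$, after arranging $\tau_i=0$ by translating in time. So Lemma \ref{lem:flatpatching}, applied to each component of $F^*(x,t) - (0,x,t)$, gives regularity and the Lipschitz bound; this is (ii)$^*$ minus the lower bound. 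Properties (i)$^*$ and (iii)$^*$ follow from \eqref{eq:closeplanecoronalem} and \eqref{eq:supportgraphlem1}, with $\kappa^*\approx \tilde\kappa$ and $\Xbf_1$ taken as in the statement; here we use that $F^*$ is the identity off $B^*$ because $g$ vanishes there. Property (iv)$^*$ follows from (2) of Lemma \ref{lem:givrotimages}, together with $\widetilde Q_i\subset B(\Xbf_1,\kappa^*\diam Q_0)$.

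\emph{The lower bi-Lipschitz bound.} This is the main obstacle. Outside $\cup\widetilde Q_i$, $F^*$ is the graph map of $g$ and hence $(1+\mathrm{Lip}\, g)$-bi-Lipschitz. Inside a single $\widetilde Q_i$, the bound is that of $\widetilde F_i$. For the remaining cases I would mimic Case 3 of the proof of Lemma \ref{lem:givrotimages}. By Lemma \ref{lem:givrotimages}(4), the image of $\widetilde Q_i$ lies in a ball of radius $\approx\xi_L\tilde\kappa\diam Q_i$ about its centre. A point of $\widetilde Q_i$ and a point at distance $\ge N\diam Q_i$ therefore have images at distance comparable to their separation, by the triangle inequality and the bi-Lipschitz property of the graph map of $g$. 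The choice $N\ge 100\,\xi_L\tilde\kappa\,C_L$ makes the error terms absorbable. Points in the collar $10\widetilde Q_i\setminus\widetilde Q_i$ are covered by the bi-Lipschitz property of $\widetilde F_i$ itself, since there $F^*$ coincides with $\widetilde F_i$ up to a common vertical translation.
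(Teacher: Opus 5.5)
Your construction is the paper's: Lemma \ref{lem:givrotimages} moves each $F_{Q_i}$ onto $P_0$ with a normal offset $c_i^*$, Lemma \ref{lem:constreplem} flattens $\theta$ to $c_i^*$ near each piece, and Lemma \ref{lem:flatpatching} splices the pieces.
\begin{itemize}
\item Your intermediate plane $P_0^{(i)}$ is a harmless reparametrization. Precomposing with the translation $P_0\to P_0^{(i)}$ gives exactly the paper's $\widetilde F_i$, with $c_i^*$ the normal coordinate of $\Xbf_{i,1}$.
\item $\tau_i=0$ holds automatically, because $\pi_{P_0}$ preserves $t$.
\item Regularity, (i)$^*$, (iii)$^*$ and (iv)$^*$ go through as you say, with two provisos. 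First, $B^*$ needs radius $\gtrsim \xi_L\tilde\kappa\diam(Q_0)$, so that it contains $4\cdot 10\widetilde Q_i$; note $\ell(\widetilde Q_i)$ can be that large. Second, (iii)$^*$(b) needs $\kappa^*\diam(Q_0)\gtrsim L_*$ times that radius. So $\kappa^*\approx\tilde\kappa$ holds only with $L$-dependent constants.
\end{itemize}

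The step that does not close as written is the lower bound when $\Xbf\in\widetilde Q_i$ and $\Ybf$ lies where $F^*=G$. You treat two regimes:
\begin{itemize}
\item $\Ybf\in 10\widetilde Q_i$, via the bi-Lipschitz bound for $\widetilde F_i$;
\item $\dist(\Xbf,\Ybf)\ge N\diam(Q_i)$, via ball containment and the triangle inequality.
\end{itemize}
Since $N\diam(Q_i)\gtrsim 100\,C_L\,\ell(\widetilde Q_i)$, the range $\Ybf\notin 10\widetilde Q_i$ with $\dist(\Xbf,\Ybf)<N\diam(Q_i)$ is covered by neither. There the error term is not absorbed. That error is the radius of a ball about $G(\text{centre})$ containing $F^*(\widetilde Q_i)$, a dimensional multiple of $\ell(\widetilde Q_i)$ that you never compare with $10$.

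The paper closes this with a sharper containment.
\begin{itemize}
\item By Lemma \ref{lem:givrotimages}(4), $\widetilde F_i$ maps $B(\Xbf_{i,0},\xi_L r_*)\cap P_0$ into $B(\Xbf_{i,1},\xi_L r_*)$. The projection of this ball to $P_0$ lies in $B(\Xbf_{i,0},\xi_L r_*)$, because $\pi_{P_0}$ is $1$-Lipschitz and $\pi_{P_0}(\Xbf_{i,1})=\Xbf_{i,0}$.
\item By Lemma \ref{lem:givrotimages}(3), $\widetilde F_i$ is a vertical translation off that ball.
\item Hence $\pi_{P_0}(F^*(\widetilde Q_i))\subseteq\widetilde Q_i$, as in \eqref{eq:projcontain}.
\item Combined with $\pi_{P_0}\circ G=\mathrm{id}$, this gives $\dist(F^*(\Xbf),F^*(\Ybf))\ge\dist(\widetilde Q_i,\Ybf)\gtrsim\dist(\Xbf,\Ybf)$ for every $\Ybf\notin 10\widetilde Q_i$.
\end{itemize}
No largeness of $\dist(\Xbf,\Ybf)$ is needed, and the same projection argument handles the two-cube case.

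Alternatively, your triangle-inequality argument already works once $\dist(\Ybf,\widetilde Q_i)\ge C_n\ell(\widetilde Q_i)$. So imposing $g\equiv c_i^*$ on $C_n\widetilde Q_i$ rather than $10\widetilde Q_i$, with $N$ enlarged accordingly, would also close the hole.
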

\begin{proof} 
Without loss of generality (change of coordinates) we assume that 
\[P_0 = \{0\} \times \mathbb{R}^n,\]
with normal vector chosen to be $n_{P_0} = e_1$.
We will often abuse notation identifying points in $P_0$ with points in $\mathbb{R}^n$. For instance, we will make the identification
\[\theta(0,x_2, \dots, x_n, t) = \theta(x_2, \dots, x_n, t)\]
for the approximating graph function.

We do not need to construct $F^*$ to observe that (i)$^*$ holds. Indeed, this just follows from \eqref{eq:closegraphcoronalem} with $Q = Q_0$,  \eqref{eq:supportgraphlem1}, and the fact that $\theta$ is $1$-Lipschitz, provided that we take $\kappa^*$ sufficiently large, depending on $\tilde{\kappa}$. 
Let
\[\Xbf_{i,0} = \pi_{P_0}(\Xbf_{i,1})\]
be the projection of $\Xbf_{i,1}$ onto the plane $P_0$. We set
\[B_{i,1} = B({\Xbf}_{i,1}, \tilde{\kappa}\diam(Q_i)),\]
\[B_{i,0} = B({\Xbf}_{i,0}, \tilde{\kappa}\diam(Q_i)),\]
and
\[P_{i,1} = P_{Q_i},\]
with the intention of using Lemma \ref{lem:givrotimages}. We let $\widetilde{F}_i$ be the map provided by Lemma \ref{lem:givrotimages} using the map $F_{Q_i}$ as the function $F$, $B_{i,k}$ as the balls $B_k$ for $k = 0,1$, and the plane $P_{i,1}$ for $P_1$. From that lemma, we deduce that $\widetilde{F}_i: P_0 \to \mathbb{R}^{n+1}$ is an $(\widetilde{L}, \widetilde{M})$-regular bi-Lipschitz map, 
\begin{equation}\label{eq:widetildeimpres}
\widetilde{F}_i(B_{i,0} \cap P_0) = F_{Q_i}(B_{i,1} \cap P_1),
\end{equation}
and that
\begin{equation}\label{eq:tildeshiftoutball}
\widetilde{F}_i(\Xbf) = \Xbf + c_i^* n_{P_0}, \quad \forall \Xbf \in (\xi_L B_{i, 0})^c \cap P_0, 
\end{equation}
where $c_i^*$ is such that
\begin{equation}\label{eq:shiftrel}
\Xbf_{i,0} + c^*_i n_{P_0} = \Xbf_{i,1}.
\end{equation}
Moreover, we have that 
\begin{equation}\label{eq:imcontainimp1}
\widetilde{F}_i(\xi_L B_{i, 0} \cap P_0)  \subseteq \xi_L B_{i, 1}.
\end{equation}
Using \eqref{eq:imcontainimp1} and \eqref{eq:tildeshiftoutball} it holds that if $E \subset P_0$ with $\xi_L B_{i, 0} \cap P_0 \subset E$ then
\begin{equation*}
\widetilde{F}_i(E) \subset \xi_L B_{i, 1} \cup (E +  c_i^* n_{P_0}).
\end{equation*}
In particular,
\begin{equation}\label{eq:projcontain}
\pi_{P_0}(\widetilde{F}_i(E)) \subseteq E. 
\end{equation}
Indeed, \eqref{eq:imcontainimp1} says exactly 
\[\widetilde{F}(\xi_L B_{i, 0} \cap P_0)  \subseteq \xi_L B_{i, 1},\]
and, for $\xbf \in E \setminus \xi_L B_{i, 0}$, \eqref{eq:tildeshiftoutball} says
\[F(\xbf) = \xbf + c_i^* n_{P_0}.\]

Our next goal is to use Lemma \ref{lem:constreplem} to create a new graph to patch the $F_i$'s together. We want to ensure first that the $c_i^*$ we have defined above are suitable. To begin, we recall that we have assumed $P_0 = \{0\} \times \mathbb{R}^n$ so that using \eqref{eq:shiftrel} we can write
\begin{equation}\label{eq:1isshift}
\Xbf_{i,1} = (c_i^*, x_2^{(i,1)}, \dots, x_n^{(i,1)}, t^{(i,1)}) = (c_i^*, x_2^{(i,0)}, \dots, x_n^{(i,0)}, t^{(i,0)}),
\end{equation}
where 
\begin{equation}\label{eq:x0coords}
\Xbf_{i,0} = (0, x_2^{(i,0)}, \dots, x_n^{(i,0)}, t^{(i,0)}). 
\end{equation}
The hypothesis  of graphical approximation by $\theta$, \eqref{eq:closegraphcoronalem} gives that 
\begin{equation}\label{eq:centergraphcloseness}
\dist((X_{Q_i},t_{Q_i}), \theta(\pi_{P_0}(X_{Q_i},t_{Q_i})))  \approx \dist((X_{Q_i},t_{Q_i}), \Gamma)  \lesssim \diam(Q_i),
\end{equation}
where the implicit constant depends only on dimension. Here, we have used the fact that the ``vertical" distance to a Lipschitz graph is comparable to the distance to a Lipschitz graph. 
Note also, by (i), 
\begin{equation}\label{eq:iiiaagain}
\dist(\Xbf_{i,1}, (X_{Q_i}, t_{Q_i})) \le \tilde{\kappa} \diam(Q_i). 
\end{equation}
Then, using this fact again (actual distance versus vertical distance to graphs), the estimates \eqref{eq:centergraphcloseness} and \eqref{eq:iiiaagain}, and the observation \eqref{eq:1isshift}  (see \eqref{eq:x0coords}), we have
\begin{equation}\label{eq:ciclosever1}
\begin{aligned}
|\theta( x_2^{(i,0)}, \dots, x_n^{(i,0)}, t^{(i,0)}) - c_i^*| &= |\theta( x_2^{(i,1)}, \dots, x_n^{(i,1)}, t^{(i,1)}) - c_i^*| \approx \dist(\Gamma, \Xbf_{i,1})
\\ & \le \dist(\Gamma, (X_{Q_i},t_{Q_i})) + \dist((X_{Q_i},t_{Q_i}), \Xbf_{i,1}) \\
&\lesssim \diam(Q_i),
\end{aligned}
\end{equation}
where the implicit constants depends at most on dimension and $\tilde{\kappa}$. Now we set 
\begin{equation}\label{eq:tildeQ-contains-ball}
	\mathcal{Q}_i \coloneqq Q(\Xbf_{i,0}, \xi_L \tilde{\kappa}\diam(Q_i)) \cap P_0 \supseteq \xi_L B_{i, 0} \cap P_0,
\end{equation}
which we view an $n$-dimensional space-time cube in $P_0$. Since $\theta$ is $1$-Lipschitz, $\diam(\mathcal{Q}_i) \approx \diam(Q_i)$ and 
\[(0, x_2^{(i,0)}, \dots, x_n^{(i,0)}, t^{(i,0)})\]
is the center of $\mathcal{Q}_i$, the estimate \eqref{eq:ciclosever1} yields the estimate
\begin{equation}\label{eq:ciclosever2}
|\theta( x_2, \dots, x_n, t) - c_i^*| \lesssim \diam(\mathcal{Q}_i), \quad \forall (0,x_2, \dots, x_n, t) \in 10^5\mathcal{Q}_i.
\end{equation}
Set 
\[\mathcal{Q}_i^* \coloneqq 10^2\mathcal{Q}_i.\]

We want to apply Lemma \ref{lem:constreplem} to the with the cubes $\mathcal{Q}_i^*$ in place of $Q_i$ therein. The estimate \eqref{eq:ciclosever2} implies \eqref{eq:closeness} with a constant only depending on dimension and $\tilde{\kappa}$. Let $\kappa \in (1,\kappa^*)$ (to be chosen sufficiently large) and consider the ball 
\begin{align}
	B^* \coloneqq B(\pi_{P_0}(X_{Q_0},t_{Q_0}), \kappa \diam(Q_0)). 
\end{align} 
For $\kappa$ large enough, depending on $\tilde{\kappa}$, it follows by assumption that $\theta$ has compact support in $B^*$. Moreover, it is clear that if we choose $\kappa$ sufficiently large, depending on dimension, $L$ and $\tilde{\kappa}$, it holds that
\begin{equation}\label{eq:4mcqicontain}
	10\mathcal{Q}_i^* \subseteq B^* \subseteq B({\Xbf}_1, \kappa^* \diam(Q_0)).
\end{equation}
Indeed, recall that $\Xbf_{i,0}$ is the center of $\mathcal{Q}_i^*$ a cube with radius comparable to the $\diam(Q_i)$ and it holds
\begin{align*}
	\dist(\Xbf_{i,0}, \pi_{P_0}(X_{Q_0},t_{Q_0})) &=  \dist(\pi_{P_0}(\Xbf_{i,1}), \pi_{P_0}(X_{Q_0},t_{Q_0})) \\
	& \le \dist(\Xbf_{i,1}, (X_{Q_0},t_{Q_0})) \le \dist(\Xbf_{i,1}, (X_{Q_i},t_{Q_i})) 
	\\ & \qquad  + \dist( (X_{Q_i},t_{Q_i}), (X_{Q_0},t_{Q_0})) \\
	& \le \tilde{\kappa} \diam(Q_i) + \diam(Q_0),
\end{align*}
where we used (i) and that $(X_{Q_i},t_{Q_i}) \in Q_i \subseteq Q_0$. From these facts and the triangle inequality, it is easy to deduce \eqref{eq:4mcqicontain} holds. 

It only remains to verify \eqref{eq:sepcubesprop} holds for the collection $\{\mathcal{Q}_i^*\}$. This is where we start employing the constant $N$, which we can choose at our disposal (provided it only depends on $L$, $\tilde{\kappa}$ and dimension). In particular, we show that if $N$ is taken sufficiently large
\begin{equation}\label{eq:calqsep1}
d_\infty(\mathcal{Q}_i^*,\mathcal{Q}_j^*) \ge 10^5 \max\{\diam_\infty(\mathcal{Q}_i^*)  , \diam_\infty(\mathcal{Q}_j^*)\} \quad \text{ for } i \neq j.
\end{equation}
Note that this also implies 
\begin{equation}\label{eq:calqsep2}
d_\infty(10\mathcal{Q}_i,10\mathcal{Q}_j) \ge 10^5 \max\{\diam_\infty(10\mathcal{Q}_i), \diam_\infty(10\mathcal{Q}_j)\} \quad \text{ for } i \neq j.
\end{equation}
Let $i \neq j$ below and define
\[\Theta(x_2,\dots, x_n, t) \coloneqq (\theta(x_2,\dots, x_n, t), x_2,\dots, x_n, t),\]
the function whose image is the graph of $\theta$. Since $\theta$ is $1$-Lipschitz, $\Theta$ is $2 = 1 + 1$ bi-Lipschitz. In particular, using that $\Theta$ is $2$-bi-Lipschitz and the triangle inequality along with \eqref{eq:centergraphcloseness} (for both $Q_i$ and $Q_j$) we obtain 
\begin{equation}\label{eq:flatcentsep}
\begin{aligned}
& \dist(\pi_{P_0}(X_{Q_i}, t_{Q_i}), \pi_{P_0}(X_{Q_j}, t_{Q_j})) \ge (1/2)\dist(\Theta(\pi_{P_0}(X_{Q_i}, t_{Q_i})), \Theta(\pi_{P_0}(X_{Q_j}, t_{Q_j})))
\\ & \quad \ge (1/2)\dist((X_{Q_i}, t_{Q_i}), (X_{Q_j}, t_{Q_j})) - (1/2)[\dist((X_{Q_i}, t_{Q_i}), \Theta(\pi_{P_0}(X_{Q_i}, t_{Q_i}))) 
\\ & \qquad + \dist((X_{Q_j}, t_{Q_j}), \Theta(\pi_{P_0}(X_{Q_j}, t_{Q_j})))
\\ & \quad \ge_{\eqref{eq:centergraphcloseness}} (N/2)\max\{\diam(Q_i),\diam(Q_j)\} - C[\diam(Q_i) + \diam(Q_j)]
\\ & \quad \ge (N/4) \max\{\diam(Q_i),\diam(Q_j)\},
\end{aligned}
\end{equation}
provided $N$ is chosen large enough. Thus, using \eqref{eq:iiiaagain} (for both $Q_i$ and $Q_j$), we have
\begin{equation}\label{eq:centsep1}
\begin{aligned}
\dist(\Xbf_{i,0}, \Xbf_{j,0}) &= \dist(\pi_{P_0}(\Xbf_{i,1}), \pi_{P_0}(\Xbf_{j,1}))
\\ & \ge \dist(\pi_{P_0}(X_{Q_i}, t_{Q_i}), \pi_{P_0}(X_{Q_j}, t_{Q_j})) - \dist(\pi_{P_0}(\Xbf_{i,1}), \pi_{P_0}(X_{Q_i}, t_{Q_i})) 
\\ & \qquad -  \dist(\pi_{P_0}(\Xbf_{j,1}), \pi_{P_0}(X_{Q_j}, t_{Q_j}))
\\ & \ge_{\eqref{eq:flatcentsep}, \eqref{eq:iiiaagain}} (N/4) \max\{\diam(Q_i),\diam(Q_j)\} - C[\diam(Q_i) + \diam(Q_j)]
\\ & \ge (N/8) \max\{\diam(Q_i),\diam(Q_j)\},
\end{aligned}
\end{equation}
provided $N$ is sufficiently large. Now, since $\diam_\infty(\mathcal{Q}_i^*) \approx \diam(Q_i)$ and $\diam_\infty(\mathcal{Q}_j^*) \approx \diam(Q_j)$ with constants depending on dimension, $L$ and $\tilde{\kappa}$, it holds that
\[\dist_\infty(\Xbf_{i,0}, \Xbf_{j,0}) \ge cN\max\{\diam_\infty(\mathcal{Q}_i^*),\diam_{\infty}(\mathcal{Q}_j^*)\}.\]
Since $\Xbf_{i,0}$ and $\Xbf_{j,0}$ are the centers of  $\mathcal{Q}_i^*$ and $\mathcal{Q}_j^*$ (resp.), it is not hard to demonstrate that \eqref{eq:calqsep1} holds. 

Now we have verified that we may use Lemma \ref{lem:constreplem} with the constants $c_i^*$, the ball $B^*$ and the cubes $\mathcal{Q}_i^*$ in place of $Q_i$ therein. Doing so produces a regular $(L_g,M_g)$-regular Lip(1,1/2) function $g$, where $L_g$ and $M_g$ depend on $M_\theta$, $L$, $\tilde{\kappa}$ and dimension, with the properties that
\begin{equation}\label{eq:giscioncqstr}
g = c_i^* \quad \text{ on } \mathcal{Q}_i^* = 10^2\mathcal{Q}_i
\end{equation}
and
\begin{equation}\label{eq:gsup4mcqiconc}
\supp g \subset B^*.
\end{equation}

We are now prepared to define $F^*$. We define 
\[G(x_2,\dots, x_n, t) \coloneqq (g(x_2,\dots,x_n, t), x_2, \dots x_n, t),\]
the function whose image is the graph of $g$. The spatial components of $G$ are regular Lip(1,1/2) functions since $g$ is regular Lip(1,1/2) and the other components are trivially regular Lip(1,1/2) functions.
We set
\[F^*(\Xbf) \coloneqq
\begin{cases}
\widetilde{F}_i(\Xbf) & \text{ if } \Xbf \in 10\mathcal{Q}_i; \\
G(\Xbf) & \text{ if } \Xbf \in (\cup 10\mathcal{Q}_j)^c.
\end{cases} \]

We now check (ii)$^*$--(iv)$^*$ for $F^*$, starting with (ii)$^*$. The first thing to check is that the spatial components of $F^*$ are regular Lip(1,1/2) functions. To do so, we use Lemma \ref{lem:flatpatching} on the components of $F^*$. 
Set
\[\widetilde{\mathcal{Q}}_i \coloneqq 10\mathcal{Q}_i.\]
Observe that \eqref{eq:tildeshiftoutball}, \eqref{eq:tildeQ-contains-ball} and \eqref{eq:giscioncqstr} give
\begin{equation}\label{eq:Gfiagreement}
G(\Xbf) = \widetilde{F}_i(\Xbf), \quad \text{ on } 10\widetilde{\mathcal{Q}}_i \setminus \mathcal{Q}_i \supseteq 10\widetilde{\mathcal{Q}}_i \setminus \widetilde{\mathcal{Q}}_i.
\end{equation}
Thus, we have verified \eqref{eq:higaggreepatchlem} for the spatial components of $F^*$. Moreover, \eqref{eq:calqsep2} demonstrates that \eqref{eq.sepforflatpatch} holds for $Q_i$ replaced with $\widetilde{\mathcal{Q}}_i$. Therefore, we may apply Lemma \ref{lem:flatpatching} to obtain that the spatial components of $F^*$ are regular Lip(1,1/2) functions. Note that this also shows that $F^*$ is a Lipschitz map.

One can check that the map $F^*$ fixes the $t$-variable by first observing that it is obviously true for $G$, but also that the maps $\widetilde{F}_i$ fix $t$. In verifying $\widetilde{F}_i$ has this property, it is useful to note that the various projection operators fix $t$, since the planes are $t$-independent. Thus, to conclude (ii)$^*$, we need only check that $F^*$ is parabolic bi-Lipschitz. Since we have already observed that $F^*$ is a Lipschitz map, this amounts to proving only the lower bounds. In particular, we must show 
\begin{equation}\label{eq:fstarlowerbd}
	\dist(F^*(\Xbf), F^*(\Ybf)) \gtrsim \dist(\Xbf, \Ybf).
\end{equation}
We break this estimate into several cases.

{\bf Case 1:} $\Xbf, \Ybf \in 10\mathcal{Q}_i$ for some $i$. Here we use that
\[F^*(\Xbf) = \widetilde{F}_i(\Xbf) \quad \text{ and } \quad F^*(\Ybf) = \widetilde{F}_i(\Ybf)\]
and hence \eqref{eq:fstarlowerbd} holds because $\widetilde{F}_i$ is bi-Lipschitz. 

{\bf Case 2:}  $\Xbf, \Ybf \in (\cup_j \mathcal{Q}_j)^c$. Here we use \eqref{eq:Gfiagreement} to deduce
\[F^*(\Xbf) = \G(\Xbf) \quad \text{ and } \quad F^*(\Ybf) = \G(\Ybf),\]
so that \eqref{eq:fstarlowerbd} since $G$ is bi-Lipschitz. 

{\bf Case 3:} $\Xbf \in \mathcal{Q}_i$ and $\Ybf \in 10\mathcal{Q}_j$ for some $j \neq i$. In this case we note that by definition 
\[\mathcal{Q}_i \supseteq \xi_L B_{i, 0} \cap P_0\]
and 
\[10 \mathcal{Q}_j \supseteq \xi_L B_{j, 0} \cap P_0.\]
Then, \eqref{eq:projcontain} gives that 
\begin{equation}\label{eq:projcalqjimp} 
	\pi_{P_0}(F^*(\mathcal{Q}_i)) \subseteq \mathcal{Q}_i
\end{equation}
and 
\[\pi_{P_0}(F^*(10\mathcal{Q}_j)) \subseteq 10\mathcal{Q}_j.\]
Then, by our choice of $N$, the triangle inequality and \eqref{eq:calqsep2}, it holds that
\[\dist(F^*(\Xbf), F^*(\Ybf)) \ge \dist(\pi_{P_0}(F^*(\Xbf)),\pi_{P_0}( F^*(\Ybf))) \ge \dist(\mathcal{Q}_i, 10\mathcal{Q}_j) \approx \dist(\Xbf,\Ybf).\]

{\bf Case 4:} $\Xbf \in \mathcal{Q}_i$ and $\Ybf \in (\cup_j 10\mathcal{Q}_j)^c$. In this case, we use that $F^*(\Ybf) = G(\Ybf)$ and, since $G(\Ybf)$ is a function that generates a graph over $P_0$, we have
\[\pi_{P_0}(F^*(\Ybf)) = \Ybf.\]
Then, using \eqref{eq:projcalqjimp}, we have
\[\dist(F^*(\Xbf), F^*(\Ybf)) \ge \dist(\pi_{P_0}(F^*(\Xbf)),\pi_{P_0}( F^*(\Ybf))) \ge \dist(\mathcal{Q}_i, \Ybf) \ge \dist(\Xbf,\Ybf),\]
where we used that $\Ybf \in (10Q_i)^c$. 
Having handled all of the possible cases, we have shown that $F^*$ is parabolic bi-Lipschitz.

Using \eqref{eq:4mcqicontain}, \eqref{eq:gsup4mcqiconc} and the fact that $\kappa \in (1,\kappa^*)$, we immediately see that (iii)$^*$(a); indeed, 
\begin{equation}\label{eq:flexiiia}
	\text{$F^*$ is the identity in $(B^*)^c \cap P_0 \supseteq B({\Xbf}_1, \kappa^* \diam(Q_0))^c \cap P_0$.}
\end{equation} 
To see that (iii)$^*$(b) holds, note that, since $F^*$ is $L^*$-parabolic Lipschitz, $B^* = B(\Xbf_1,\kappa \diam(Q_0))$ and \eqref{eq:flexiiia} holds, we have 
\begin{align*}
	F^*(B(\Xbf_1,\kappa \diam(Q_0)) \cap P_0) &\subseteq B(\Xbf_1,4L^*\kappa \diam(Q_0)).
\end{align*} 
Property \eqref{eq:flexiiia} also gives 
\begin{align*}
	F^*(A(\Xbf_1,\kappa^*\diam(Q_0),\kappa \diam(Q_0)) \cap P_0) \subseteq B(\Xbf_1,\kappa^* \diam(Q_0)),
\end{align*}
where $A(\Xbf,R,r)$ denotes the annulus with outer radius $R$ and inner radius $r$. As long as $\kappa^* \geq 4L^*\kappa$, the above two inclusions give (iii)$^*$(b).

Finally, we show that (iv)$^*$ holds, that is, 
\[\cup_i F_{Q_i}(B({\Xbf}_{i,1}, \tilde{\kappa}\diam(Q_i))\cap P_{Q_i}) \subset F^*(P_0 \cap B(X_1, \kappa^*\diam(Q_0)).\]
To this end, note that, by construction of $\widetilde{F}_i$, specifically \eqref{eq:widetildeimpres}, it holds
\[F_{Q_i}(B({\Xbf}_{i,1}, \tilde{\kappa}\diam(Q_i))\cap P_{Q_i}) = F_{Q_i}(B_{i,1} \cap P_{Q_i}) =  \widetilde{F}_i(B_{i,0} \cap P_0 ).\]
Since $B_{i,0} \cap P_0 \subset 10\mathcal{Q}_i$, we have 
\[\cup_i F_{Q_i}(B({\Xbf}_{i,1}, \tilde{\kappa}\diam(Q_i))\cap P_{Q_i}) \subset F^*(\cup 10\mathcal{Q}_i).\]
Recalling that we have already shown \eqref{eq:4mcqicontain}, we easily see that (iv)$^*$ holds.

\end{proof}

\subsection{Discrete Carleson Measures and the Set-up of the Inductive Argument}
Here we closely follow the definitions in \cite{BH-BP, BHHLN-BP}.

Given $\mathcal{F}$, a collection of pairwise-disjoint dyadic cubes in $\dd$, we define 
\[\dd_{\mathcal{F}} = \dd \setminus (\cup_{Q \in \mathcal{F}} \dd(Q)).\]
These are the cubes that are ``above" the cubes in $\mathcal{F}$ with respect to ancestory.

\begin{definition}[Discrete Measures and Discrete Carleson Norms]
Suppose that $\Sigma \subseteq \mathbb{R}^{n+1}$ is ADR with constant $C'$ and $\mathbb{D} = \mathbb{D}(\Sigma)$ be as above. Let $\{\alpha_Q\}_{Q \in \mathbb{D}}$, where $\alpha_Q \in [0, \infty)$.
We let $\mut$ be the discrete measure associated to $\{\alpha_Q\}_{Q \in \mathbb{D}}$ defined by
\[\mut(\mathbb{D}') \coloneqq \sum_{Q \in \mathbb{D}'} \alpha_Q,\]
for any collection of cubes $\mathbb{D}' \subseteq \mathbb{D}$. If $\mathcal{F} = \{Q_j\}$ is a countable collection of pairwise disjoint cubes in $\mathbb{D}$, we define $\mut_{\mathcal{F}}$ by
\[\mut_{\mathcal{F}}(\mathbb{D}') \coloneqq \mut(\mathbb{D}' \cap \mathbb{D}_{\mathcal{F}}).\]
If $\mathcal{F} = \{Q_j\}$ is a countable collection of pairwise disjoint cubes in $\mathbb{D}$, we define the global Carleson norm of $\mut_{\mathcal{F}}$ as
\[\|\mut_\mathcal{F}\|_{\mathcal{C}} \coloneqq \sup_{Q \in \mathbb{D}} \frac{\mut_\mathcal{F}(\mathbb{D}(Q))}{\sigma(Q)}\]
and, for $Q_0 \in \mathbb{D}$, the localized Carleson norm of $\mut_\mathcal{F}$ (with respect to $Q_0$) as
\[\|\mut_\mathcal{F}\|_{\mathcal{C}(Q_0)} \coloneqq \sup_{Q \in \mathbb{D}(Q_0)} \frac{\mut_\mathcal{F}(\mathbb{D}(Q))}{\sigma(Q)}.\]
Here, if $\mathcal{F}= \emptyset$ we write $\mut$ in place of $\mut_{\mathcal{F}}$ in the notation above.
\end{definition}

An important ingredient in the proof of this direction of Theorem \ref{main.thrm} is the
following decomposition of a discrete Carleson region.

\begin{lemma}[{\cite[Lemma 7.2]{HM-I}}]\label{extraplem.lem}
Suppose that $\Sigma \subseteq \mathbb{R}^{n+1}$ is ADR with constant $C'$ and $\mathbb{D}(\Sigma)$ is as above. Suppose that $\mut$ is a discrete measure associated to $\{\alpha_Q\}_{Q \in \mathbb{D}}$. There exists $C$ depending on $d$ and $C'$ such that the following holds. Given $a\geq 0$, $b>0$, and $Q \in \mathbb{D}$ such that
$\mut(\dd(Q))\leq (a+b)\,\sigma(Q)$,
there is a family $\widetilde{\F}=\{Q_j\}\subset\dd(Q)$
of pairwise disjoint cubes such that
\begin{equation} \label{Corona-sawtooth}
\|\mut_{\widetilde{\F}}\|_{\mathcal{C}(Q)}
\leq C b
\end{equation}
and
\begin{equation}
\label{Corona-bad-cubes}
\sigma(B)
\leq \frac{a+b}{a+2b}\, \sigma(Q)\,,
\end{equation}
where $B$ is the union of those $Q_j\in\widetilde{\F}$ such that
$\mut\left(\dd(Q_j) \setminus \{Q_j\}\right)>a\,\sigma(Q_j)$. We set
\[\widetilde{\mathcal{B}} = \left\{Q \in \widetilde{\F}: \mut\left(\dd(Q_j)\setminus \{Q_j\}\right)>a\,\sigma(Q_j)\right\}. \] 
\end{lemma}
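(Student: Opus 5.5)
The natural route is a stopping-time argument inside $Q$, selecting cubes where the local mass below them is large, in the spirit of the extrapolation/sawtooth arguments of Hofmann--Martell. Fix $Q$ with $\mut(\dd(Q))\le (a+b)\sigma(Q)$. Let $\widetilde{\F}$ be the collection of maximal cubes $Q'\in\dd(Q)$ (possibly including $Q$ itself) for which
\[
\mut(\dd(Q')\setminus\{Q'\}) > a\,\sigma(Q') \ \text{ or } \ \mut(\dd(Q')) > (a+2b)\sigma(Q')\cdot(\text{suitable threshold}),
\]
and refine this so that the stopping condition is tailored to give both conclusions. A cleaner choice, which I would try first, is to stop at maximal subcubes $Q_j\subsetneq Q$ with $\mut(\dd(Q_j))>(a+2b)\sigma(Q_j)$, together with the cubes where $\alpha_{Q_j}$ alone is large. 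Then for every $Q'\in\dd(Q)\cap\dd_{\widetilde\F}$, with $Q'\neq Q$, not being stopped gives $\mut(\dd(Q'))\le (a+2b)\sigma(Q')$.

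The first step is the packing bound. Since the $Q_j$ are disjoint and maximal, the mass they carry satisfies
\[
(a+2b)\sum_j\sigma(Q_j) < \sum_j \mut(\dd(Q_j)) \le \mut(\dd(Q)) \le (a+b)\sigma(Q).
\]
This yields $\sigma(\cup Q_j)\le \frac{a+b}{a+2b}\sigma(Q)$, and in particular the bound \eqref{Corona-bad-cubes} for the subfamily $B$.

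The second step is the Carleson bound \eqref{Corona-sawtooth}. For $Q'\in\dd(Q)$, $\mut_{\widetilde\F}(\dd(Q'))$ only counts cubes above the stopping cubes. A naive bound gives $\lesssim (a+2b)\sigma(Q')$, which is not $\lesssim b$. To get the scale $b$ one must subtract: write
\[
\mut_{\widetilde\F}(\dd(Q')) = \mut(\dd(Q')) - \sum_{Q_j\subset Q'}\mut(\dd(Q_j)).
\]
Bound the first term by $(a+2b)\sigma(Q')$ when $Q'$ is not stopped. For the second term, use that each stopping cube (which lies in $B$, or else has its mass controlled via the children) carries mass at least $a\sigma(Q_j)$. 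The stopping rule therefore has to combine the two thresholds, $a$ for $\mut(\dd(Q_j)\setminus\{Q_j\})$ and $a+2b$ for the top. Then
\[
\mut_{\widetilde\F}(\dd(Q'))\le (a+2b)\sigma(Q') - a\,\sigma\Big(\bigcup_{Q_j\subset Q'} Q_j\Big) + \Big(\text{contributions of } \alpha_{Q_j}\Big),
\]
and one needs the uncovered part of $Q'$ to be handled by an iterated (Calderón--Zygmund style) argument at level $b$.

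I expect the main obstacle to be exactly this cancellation: making the leading $a\sigma(Q')$ cancel against the mass of the stopping cubes when those cubes do not cover $Q'$. My plan is to run the stopping time with threshold $a$, i.e. stop when $\mut(\dd(Q')\setminus\{Q'\}) > a\sigma(Q')$ fails to propagate. Points of $Q'$ not covered by stopping cubes lie in arbitrarily small good cubes. ADR and a density/Lebesgue-differentiation argument should then show that the excess over $a$ accumulates with density at most $Cb$, using the dyadic structure of Lemma \ref{cubes}. Since this is the cited \cite[Lemma 7.2]{HM-I}, I would in practice follow its proof verbatim, checking only that it uses nothing beyond the ADR dyadic structure, which holds here.
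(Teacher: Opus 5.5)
Your packing computation is the right source of the measure bound: for cubes stopped because $\mathfrak{m}(\mathbb{D}(Q_j))>(a+2b)\sigma(Q_j)$, disjointness gives $(a+2b)\sigma(\bigcup Q_j)\le \mathfrak{m}(\mathbb{D}(Q))\le (a+b)\sigma(Q)$. The gap is that the Carleson bound $\|\mathfrak{m}_{\widetilde{\mathcal F}}\|_{\mathcal C(Q)}\le Cb$ is never proved, and the stopping rule is never fixed.

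\begin{itemize}
\item Your first rule has the $a$-condition backwards. In the only nontrivial case, $\mathfrak{m}(\mathbb{D}(Q)\setminus\{Q\})>a\sigma(Q)$, it selects $Q$ itself, so $\widetilde{\mathcal F}=\{Q\}$ and $B=Q$.
\item The ``cleaner choice'' cannot reach the scale $b$. Take $\alpha_R=\epsilon\,\sigma(R)$ on the first $K+1$ generations of $\mathbb{D}(Q)$ and $\alpha_R=0$ below, with $\epsilon\ll b$ and $\epsilon(K+1)=a+b$. Then every $R$ has $\mathfrak{m}(\mathbb{D}(R))\le (a+b)\sigma(R)$ and no $\alpha_R$ is large. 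So nothing is stopped, and $\mathfrak{m}_{\widetilde{\mathcal F}}(\mathbb{D}(Q))=(a+b)\sigma(Q)$, which is not $\le Cb\,\sigma(Q)$ once $a\gg b$.
\item The subtraction step assumes each stopping cube carries mass at least $a\,\sigma(Q_j)$. This is false precisely for the cubes you must stop at to remove the level $a$, namely those with $\mathfrak{m}(\mathbb{D}(Q_j)\setminus\{Q_j\})\le a\sigma(Q_j)$; such a cube may carry no mass at all.
\end{itemize}

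A density or differentiation argument on the uncovered set does not repair this. The paper itself simply imports the lemma from Hofmann--Martell, and ``following that proof verbatim'' leaves exactly this step unaccounted for.

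The missing idea is a two-sided stopping rule combined with a telescoping identity. If $\mathfrak{m}(\mathbb{D}(Q)\setminus\{Q\})\le a\sigma(Q)$, take $\widetilde{\mathcal F}=\{Q\}$. Otherwise let $\widetilde{\mathcal F}$ be the maximal $Q'\in\mathbb{D}(Q)$ with either $\mathfrak{m}(\mathbb{D}(Q')\setminus\{Q'\})\le a\sigma(Q')$ or $\mathfrak{m}(\mathbb{D}(Q'))>(a+2b)\sigma(Q')$. Cubes of the first kind are not in $\widetilde{\mathcal B}$, so your packing argument gives the bound on $\sigma(B)$.

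For the Carleson bound, set $u(R)=\mathfrak{m}(\mathbb{D}(R))-a\sigma(R)$, so that $\alpha_R=u(R)-\sum_{R'\in\mathrm{ch}(R)}u(R')$. Call $R\in\mathbb{D}(Q)$ a sawtooth cube if it is not contained in any stopping cube. Every sawtooth cube satisfies $\sum_{R'\in\mathrm{ch}(R)}u(R')>0$ and $0<u(R)\le 2b\,\sigma(R)$, and each of its children is either sawtooth or in $\widetilde{\mathcal F}$. Fix a sawtooth $Q'$, and let $\mathcal T_{<k}$ and $\mathcal T_k$ denote the sawtooth cubes in $\mathbb{D}(Q')$ of relative generation $<k$ and $=k$. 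Telescoping gives
\[
\sum_{R\in\mathcal T_{<k}}\alpha_R=u(Q')+\sum_{R\in\mathcal T_{<k}}D(R)-\sum_{R\in\mathcal T_k}u(R),\qquad D(R):=-\sum_{R'\in\mathrm{ch}(R)\cap\widetilde{\mathcal F}}u(R').
\]
The last sum is nonnegative and can be dropped; this is all the never-stopped points require. Two further facts close the estimate:
\begin{itemize}
\item $D(R)<\sum_{R'\in\mathrm{ch}(R)\setminus\widetilde{\mathcal F}}u(R')\le 2b\,\sigma(R)$, and $D(R)=0$ unless $R$ has a child in $\widetilde{\mathcal F}$.
\item By ADR and Lemma \ref{cubes}(v), every child $R'$ of $R$ has $\sigma(R')\ge c\,\sigma(R)$. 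Hence the sawtooth cubes having a stopping child have total measure at most $c^{-1}\sigma(Q')$.
\end{itemize}
Letting $k\to\infty$ yields $\mathfrak{m}_{\widetilde{\mathcal F}}(\mathbb{D}(Q'))\le (2+2c^{-1})\,b\,\sigma(Q')$. The lower bound on children's measure is where the dependence of $C$ on the ADR constant enters, and nothing in your sketch plays that role.
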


From now on, we are going to assume that $\Sigma$ is P-UR. Using Theorem \ref{PURiffcorona.thrm}, we have a corona decomposition for $\Sigma$ by regular Lip(1,1/2) graphs. We apply Theorem \ref{PURiffcorona.thrm} with $L = 1$ and we let $(\mathcal{G},\mathcal{B},\mathcal{M})$ denote the resulting collections of \textit{good, bad} and \textit{maximal} cubes, and $\mathcal{S}$ denote the resulting collection of stopping-time regimes.

We define
\begin{equation}\label{eq4.0}
\alpha_Q:= 
\begin{cases} \sigma(Q)\,,&{\rm if}\,\, Q\in \M\cup\B, \\
0\,,& {\rm otherwise}.\end{cases}
\end{equation}
and we let $\mut$ be the discrete measure with respect to $\{\alpha_Q\}_{Q \in \mathbb{D}}$. Note that, by assumption, 
\begin{equation}\label{carlconst.eq}
\|\mut\|_{\mathcal{C}} \le C_0. 
\end{equation}

As in \cite{BH-BP, BHHLN-BP}, we make some important observations.

\begin{lemma}\label{impob1.lem}
Fix $x \in \Sigma$ and $\sbf \in \mathcal{S}$. If there exists an (infinite) nested sequence of cubes
$Q_0 \supsetneq Q_1 \supsetneq Q_2 \dots$, with $x \in Q_k$ and $Q_k \in \sbf$, then $x \in \Gamma_{\sbf}$.
\end{lemma}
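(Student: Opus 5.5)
The plan is to use the graph approximation in Theorem \ref{PURiffcorona.thrm}(4) with $L=1$, applied to the coherent regime $\sbf' = \sbf$ itself. This gives a $1$-Lipschitz (in fact $(1,M)$-regular) graph $\Gamma_{\sbf}$ such that, for every $Q\in\sbf$,
\[
\sup_{\Ybf\in 10Q}\dist(\Ybf,\Gamma_{\sbf}) \le \diam(Q).
\]
Applying this to each $Q_k$ in the nested sequence, and using $x\in Q_k\subseteq 10Q_k$, we get
\[
\dist(x,\Gamma_{\sbf}) \le \diam(Q_k) \quad \text{for every } k.
\]

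Next I use that the $Q_k$ are strictly nested dyadic cubes. Property (iii) of Lemma \ref{cubes} shows the generation strictly increases along the sequence, so $\ell(Q_k)\le 2^{-k}\ell(Q_0)$. By (iv) of Lemma \ref{cubes}, $\diam(Q_k)\le c_*2^{-k}\ell(Q_0)\to 0$. Hence $\dist(x,\Gamma_{\sbf})=0$.

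Finally, $\Gamma_{\sbf}$ is the graph of a continuous (Lip(1,1/2)) function over the $t$-independent plane $P$, so it is closed in $\RR^{n+1}$. Distance zero then gives $x\in\Gamma_{\sbf}$.

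The only point needing care is that a single graph $\Gamma_{\sbf}$ works uniformly for all $Q\in\sbf$. Item (4) of Theorem \ref{PURiffcorona.thrm} provides exactly this when applied with $\sbf'=\sbf$, which is coherent and hence semi-coherent. No other real obstacle arises.
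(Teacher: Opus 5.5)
Your proposal is correct and follows essentially the same argument as the paper: the graph approximation \eqref{closegraphcorona.eq} gives $\dist(x,\Gamma_{\sbf}) \lesssim \diam(Q_k) \lesssim 2^{-k}\ell(Q_0)$ for every $k$, and closedness of $\Gamma_{\sbf}$ then gives $x \in \Gamma_{\sbf}$. (The fact that the generations strictly increase along the chain comes from the nesting property (ii) of Lemma \ref{cubes} rather than (iii), but this does not affect the argument.)
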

\begin{proof} The proof of this lemma is simple. Since $Q_{k+1} \subsetneq Q_k$,
it follows that $\ell(Q_k) \le 2^{-k}\ell(Q_0)$. Then \eqref{closegraphcorona.eq} gives that $\dist(x,\Gamma_{\sbf}) \lesssim 2^{-k}\ell(Q_0)$ for all $k \in \mathbb{N}$. Since $\Gamma_{\sbf}$ is closed,
$x \in \Gamma_{\sbf}$.
\end{proof}

\begin{lemma}\label{impob2.lem}
If $Q_0 \in \mathbb{D}(\Sigma)$, $\widetilde{\mathcal{F}}$ is a collection of pairwise disjoint sub-cubes of $Q_0$ and
    \[\|\mut_{\widetilde{\F}}\|_{\mathcal{C}(Q_0)} \le 1/2,\]
    then there exists $\sbf \in \mathcal{S}$ such that $Q \in \sbf$ whenever $Q \in \mathbb{D}_{\widetilde{\mathcal{F}}}(Q_0)$.
\end{lemma}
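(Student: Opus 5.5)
The plan is to argue that every cube in $\dd_{\widetilde{\F}}(Q_0)$ is neither bad nor a maximal cube of a stopping-time regime, except possibly $Q_0$ itself. Coherence of the regimes then forces all of them into the single regime containing $Q_0$.

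The key observation is that for $Q \in \dd_{\widetilde{\F}}(Q_0)$, by definition of $\alpha_Q$,
\[\mut_{\widetilde{\F}}(\dd(Q)) \ge \alpha_Q = \sigma(Q) \quad \text{whenever } Q \in \M \cup \B.\]
Here we use that $Q \in \dd_{\widetilde{\F}}$ means $Q$ is not contained in any cube of $\widetilde{\F}$, so $Q$ itself is counted in $\mut_{\widetilde{\F}}(\dd(Q))$. But the hypothesis $\|\mut_{\widetilde{\F}}\|_{\mathcal{C}(Q_0)} \le 1/2$ gives $\mut_{\widetilde{\F}}(\dd(Q)) \le \sigma(Q)/2 < \sigma(Q)$ for every $Q \in \dd(Q_0)$, using $\sigma(Q) > 0$ by ADR. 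Hence no cube of $\dd_{\widetilde{\F}}(Q_0)$ lies in $\M \cup \B$. In particular $Q_0 \notin \B$ (note $Q_0 \in \dd_{\widetilde{\F}}(Q_0)$ unless $Q_0 \in \widetilde{\F}$, in which case the statement is vacuous since $\dd_{\widetilde{\F}}(Q_0)=\emptyset$). So $Q_0 \in \G$, and therefore $Q_0 \in \sbf$ for a unique $\sbf \in \mathcal{S}$.

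Next I would show, by induction down the dyadic tree, that every $Q \in \dd_{\widetilde{\F}}(Q_0)$ lies in this $\sbf$. Given such $Q \ne Q_0$, its parent $\widehat{Q}$ is also in $\dd_{\widetilde{\F}}(Q_0)$, since ancestors of cubes not contained in any $\widetilde{\F}$-cube are likewise not contained in one (the family is disjoint, so containment passes up only downward). By induction $\widehat{Q} \in \sbf$. Since $Q \in \G$, we have $Q \in \sbf'$ for some regime $\sbf'$. If $\sbf' \ne \sbf$, then $Q$ is the maximal cube $Q(\sbf')$: otherwise the parent $\widehat{Q} \subset Q(\sbf')$ would lie in $\sbf'$ by coherence (b), contradicting disjointness of the regimes. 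But $Q(\sbf') \in \M$, which we excluded. Hence $Q \in \sbf$.

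There is no real obstacle. The only point needing care is the bookkeeping that $\dd_{\widetilde{\F}}(Q_0)$ is closed under taking parents within $Q_0$, together with treating the case $Q_0\in\widetilde{\F}$ separately.
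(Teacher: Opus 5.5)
Your proposal is correct and follows essentially the same route as the paper. The bound $\|\mut_{\widetilde{\F}}\|_{\mathcal{C}(Q_0)}\le 1/2$, combined with $\alpha_Q/\sigma(Q)\in\{0,1\}$, rules out bad and maximal cubes in $\dd_{\widetilde{\F}}(Q_0)$, and coherence (b) then forces every such cube into the regime containing $Q_0$. The only difference is organizational: you induct down the tree by comparing each cube with its parent, whereas the paper argues by contradiction with the dichotomy $Q_0\subseteq Q(\sbf)$ versus $Q(\sbf)\subsetneq Q_0$, where in the second case $Q(\sbf)$ would itself be a maximal cube lying in $\dd_{\widetilde{\F}}(Q_0)$.
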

\begin{proof}
This proof is also simple but requires chasing a few definitions. We first note that we can assume that $\widetilde{\mathcal{F}} \neq \{Q_0\}$, as otherwise the lemma is vacuously true. For $Q \in \mathbb{D}_{\widetilde{\mathcal{F}}}(Q_0)$, we have \[\alpha_Q/\sigma(Q) \le \mut(Q)/\sigma(Q) \le \|\mut_{\widetilde{\F}}\|_{\mathcal{C}(Q_0)}  \le 1/2.\]
By definition $\alpha_Q/\sigma(Q) \in \{0,1\}$, hence, $Q \in \mathbb{D}_{\widetilde{\mathcal{F}}}(Q_0)$ can never be a maximal or a bad cube.

Let $\sbf_0$ be the stopping time regime such that $Q_0 \in \sbf_0$. Suppose, for the sake of contradiction, that $Q \in \mathbb{D}_{\widetilde{\mathcal{F}}}(Q_0)$ but $Q \not \in \sbf_0$. Since $Q$ is not maximal or bad, it must be the case that $Q\in \sbf$ for some $\sbf \neq \sbf_0$. It can't be the case that $Q_0 \subseteq Q(\sbf)$ (the maximal cube for $\sbf$) as by coherency of the stopping time regimes, $Q_0 \in \sbf$, which would yield a contradiction. On the other hand, if $Q(\sbf) \subset Q_0$ then since $Q \subset Q(\sbf)$ we have $Q(\sbf) \in \mathbb{D}_{\widetilde{\mathcal{F}}}(Q_0)$. This is contradiction to the fact that $\mathbb{D}_{\widetilde{\mathcal{F}}}(Q_0)$ contains no maximal cubes.
\end{proof}

Combining the two lemmas above, we obtain the following.

\begin{lemma}\label{impobcor.lem}
Let $Q_0 \in \mathbb{D}(\Sigma)$ and $\widetilde{\mathcal{F}} = \{Q_j\}_{j}$ be a collection of pairwise disjoint subcubes of $Q_0$, with
$\widetilde{\mathcal{F}} \neq \{Q_0\}$ and
    \[\|\mut_{\widetilde{\F}}\|_{\mathcal{C}(Q_0)} \le 1/2.\]
    Let $\sbf_0$ be the stopping time regime such that $Q_0 \in \sbf_0$, which exists by Lemma \ref{impob2.lem}. Let $\sbf'_0 = \sbf_0 \cap \mathbb{D}(Q_0)$, a semi-coherent stopping time regime.
    Set $A = Q_0 \setminus \cup_{j} Q_j$. If $x \in A$ then $x \in \Gamma_{\sbf_0'}$.
\end{lemma}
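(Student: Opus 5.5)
The plan is to combine Lemma \ref{impob2.lem} with Lemma \ref{impob1.lem}, applied to the semi-coherent regime $\sbf_0'$ in place of $\sbf$. Theorem \ref{PURiffcorona.thrm} (4) allows us to use semi-coherent subregimes. Concretely, it supplies a graph $\Gamma_{\sbf_0'}$ satisfying \eqref{closegraphcorona.eq} for all $Q \in \sbf_0'$. The proof of Lemma \ref{impob1.lem} uses only this approximation property and the closedness of the graph, so it transfers verbatim to $\sbf_0'$.

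Fix $x \in A = Q_0 \setminus \cup_j Q_j$. For each $k \ge 0$, let $Q^{(k)}$ be the unique cube in $\dd_k(Q_0)$ containing $x$; it exists by Lemma \ref{cubes}. These cubes are nested and strictly decrease in size.

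The key step is to show that every $Q^{(k)}$ lies in $\mathbb{D}_{\widetilde{\mathcal{F}}}(Q_0)$, that is, that no $Q^{(k)}$ is contained in some $Q_j \in \widetilde{\F}$. This is immediate: $Q^{(k)} \subseteq Q_j$ would give $x \in Q_j$, contradicting $x \in A$.

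By Lemma \ref{impob2.lem}, every cube of $\mathbb{D}_{\widetilde{\mathcal{F}}}(Q_0)$ belongs to a single regime, and that regime contains $Q_0$, so it is $\sbf_0$. Since each $Q^{(k)}$ is also a subcube of $Q_0$, we get $Q^{(k)} \in \sbf_0 \cap \mathbb{D}(Q_0) = \sbf_0'$ for all $k$.

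Finally, apply \eqref{closegraphcorona.eq} to each $Q^{(k)} \in \sbf_0'$. Since $x \in Q^{(k)} \subseteq 10Q^{(k)}$, this gives $\dist(x, \Gamma_{\sbf_0'}) \le \diam(Q^{(k)}) \lesssim 2^{-k}\ell(Q_0)$. Letting $k \to \infty$ and using that $\Gamma_{\sbf_0'}$ is closed (it is the graph of a continuous function), we conclude $x \in \Gamma_{\sbf_0'}$.

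The only mildly delicate point is bookkeeping. We need $\Gamma_{\sbf_0'}$ to be the graph from Theorem \ref{PURiffcorona.thrm} (4) for the semi-coherent subregime, rather than $\Gamma_{\sbf_0}$. We also need $\widetilde{\F} \neq \{Q_0\}$, which ensures that $A$ is meaningful and that Lemma \ref{impob2.lem} applies nonvacuously.
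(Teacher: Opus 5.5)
Your proposal is correct and follows the paper's proof essentially step for step. Every cube of $\dd(Q_0)$ containing $x$ lies in $\mathbb{D}_{\widetilde{\mathcal{F}}}(Q_0)$ (otherwise $x \in Q_j$), hence in $\sbf_0'$ by Lemma \ref{impob2.lem}, and the argument of Lemma \ref{impob1.lem} applied to the nested chain gives $x \in \Gamma_{\sbf_0'}$. The only unspoken point is that identifying the regime with $\sbf_0$ uses $Q_0 \in \mathbb{D}_{\widetilde{\mathcal{F}}}(Q_0)$, which is exactly what $\widetilde{\mathcal{F}} \neq \{Q_0\}$ together with pairwise disjointness guarantees.
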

\begin{proof}
Let $x \in A$. By the properties of dyadic cubes, for any $Q \in \mathbb{D}(Q_0)$ such that $x \in Q$ we
have that $Q$ is not contained in $\mathbb{D}(Q_j)$ for any $j$ (otherwise this would imply that $x \in Q_j$). Thus, $Q \in \mathbb{D}_{\widetilde{\mathcal{F}}}(Q_0)$, and it follows from Lemma \ref{impob2.lem} that $Q \in \sbf_0'$. Let $R_i$, $i = 0,1,2 \dots$, be such that $R_{i + 1}$ is the unique sub-cube of $R_i$ such that $x \in R_{i +1}$. Then, $x \in R_i$ and the collection $R_i$ satisfy the hypothesis of Lemma \ref{impob1.lem}. Hence, $x \in \Gamma_{\sbf_0'}$.
\end{proof}

\subsection{Setting Up the Induction Hypothesis and Induction Argument} 

Our goal is to show that for every $Q$ has a (fixed, uniform) coincidence with a regular parabolic bi-Lipschitz image of $n$-dimensional space-time $\mathbb{R}^n$. We recall that \eqref{carlconst.eq} holds and hence 
\begin{equation}\label{mutiscarl.eq}
	\mut(\dd(Q)) \le C_0\sigma(Q), \quad \forall Q \in \dd.
\end{equation} 
We set up a finite induction scheme that lets us say, whenever $a \in [0,C_0]$ and
\[\mut(\dd(Q)) \le a\sigma(Q), \quad \forall Q \in \dd, \]
then $Q$ has a (fixed, uniform) coincidence with a regular parabolic bi-Lipschitz image of $n$-dimensional space-time $\mathbb{R}^n$. We will show that having this hold for $a$ implies it holds for $a + b$, where $b$ is a sufficiently small constant depending only on dimension and ADR. This is roughly what we do below. However, the complexity of showing the statement holds for $a$ implies it holds for $a + b$ requires us to introduce additional quantitative parameters.

For $a \ge 0$, let $H(a)$ be the following statement: There exists positive constants $M_a,L_a, \kappa_a^*, \eta_a$ such that, if
$\mut(\dd(Q_0)) \le a\sigma(Q_0)$, then there exists a $t$-independent plane $P_{Q_0}$ and a function $F_{Q_0}: P_{Q_0} \to \mathbb{R}^{n+1}$ with the following properties. 
\begin{enumerate}
\item[(i)] $\dist((X_{Q_0},t_{Q_0}), P_{Q_0}) < \kappa_a^*\diam(Q_0)$.
\item[(ii)] $F_{Q_0}$ is a $(L_a,M_a)$ regular parabolic bi-Lipschitz map of $n$-dimensional space-time $\mathbb{R}^n$ (upon identifying $P_{Q_0}$ with $\mathbb{R}^n$).
\item[(iii)] If ${\Xbf}_1 = \pi(X_{Q_0},t_{Q_0})$ is the projection of the `center' of $Q_0$ onto $P_{Q_0}$, then 
\begin{enumerate}
\item[(a)] $F_{Q_0}$ is the identity in $B({\Xbf}_1, \kappa_a^*\diam(Q_0))^c \cap P_{Q_0}$,
\item[(b)] $F_{Q_0}(B(\Xbf_1, \kappa_a^*\diam(Q_0))) \subseteq B(\Xbf_1, \kappa_a^*\diam(Q_0))$.
\end{enumerate}
\item[(iv)] 
\[\sigma(Q_0 \cap F_{Q_0}(B({\Xbf}_1, \kappa_a^*\diam(Q_0)) \cap P_{Q_0})) \ge \eta_a\sigma(Q_0).\]
\end{enumerate}

The statement $H(0)$ is essentially trivial from Lemma \ref{impobcor.lem} and Theorem \ref{PURiffcorona.thrm}, see \cite{BHHLN-BP}. For this we note that, if $\mut(\dd(Q)) = 0$, then every cube in $\dd(Q_0)$ is in a single stopping time regime $\sbf$ and therefore $\sbf' = \dd(Q_0)$ is a semi-coherent stopping time regime contained in $\sbf$. Theorem \ref{PURiffcorona.thrm} furnishes an $(L,M)$-regular Lip(1,1/2) function $\psi = \psi_{\sbf'}$, whose graph coincides with $Q_0$ by Lemma \ref{impobcor.lem}. Setting $F(x,t) = (\psi(x,t),x,t)$, we clearly have that $F$ is a regular parabolic bi-Lipschitz map of $n$-dimensional space-time $\mathbb{R}^n$ (upon identifying $P_{Q_0}$ with $\mathbb{R}^n$).

We now begin the induction scheme in earnest. Fix $b > 0$, depending on dimension and ADR, such that $Cb \le 1/2$, where $C$ is from Lemma \ref{extraplem.lem}. We show that if $H(a)$ holds then $H(a + b)$ holds. Since $b$ is fixed, this is enough to show that $H(C_0)$ holds, and completes the proof of the theorem. 

Let $Q_0$ be such that $\mut(\mathbb{D}(Q_0)) \le (a + b)\sigma(Q_0)$. We apply Lemma \ref{extraplem.lem} to $Q_0$ to obtain $\widetilde{\mathcal{F}} = \{Q_j\}_j$, a collection of pairwise disjoint sub-cubes of $Q_0$ with the properties stated in Lemma \ref{extraplem.lem}. An important observation is that, by our choice of $b$, we have
\[\|\mut_{\widetilde{\F}}\|_{\mathcal{C}(Q_0)} \le 1/2.\]
This allows us to utilize Lemmas \ref{impob2.lem} and \ref{impobcor.lem} (in the case $\widetilde{\mathcal{F}} \neq \{Q_0\}$).

We define the following objects. Let 
\begin{equation*}
	\widetilde{\mathcal{G}} \coloneqq \widetilde{\mathcal{F}} \setminus \widetilde{\mathcal{B}}, \quad G \coloneqq \cup_{Q_j \in \widetilde{\mathcal{G}}} Q_j \quad \mbox{ and } \quad A = Q_0 \setminus (\cup_{Q_j \in \widetilde{\mathcal{F}}} Q_j),
\end{equation*}
where $\widetilde{\mathcal{B}}$ is from Lemma \ref{extraplem.lem}, and set 
\begin{align*}
	\gamma_a =1 - \frac{a + b}{a + 2b} > 0.
\end{align*}
By Lemma \ref{extraplem.lem}, we have
$\sigma(B) \le (1- \gamma_a) \sigma(Q_0)$. Hence, 
\[\sigma(A \cup G) = \sigma(Q_0 \cap B^c) \ge \gamma_a \sigma(Q_0).\]

There are a few cases to consider. Case 1 and Case 2a are `easy', and Case 2b is really the heart of the matter.
\\ 

\noindent{\bf Case 1:} $\sigma(A) > (\gamma_a/2)\sigma(Q_0)$. In this case, we use Lemma \ref{impobcor.lem} to say that there exists a stopping time regime $\sbf_0$ such that $Q_0 \in \sbf_0$ 
and, if we set $\sbf_0' = \sbf_0 \cap \dd(Q_0)$, then 
$A \subseteq \Gamma_{\sbf_0'}$. Here, $\Gamma_{\sbf_0'}$ is the graph of an $(L,M)$-regular Lip(1,1/2) function $\psi = \psi_{\sbf'}$ from Theorem \ref{PURiffcorona.thrm}. Setting, $F_{Q_0}(x,t) = (\psi(x,t),x,t)$, the properties (i)--(iii) hold by Theorem \ref{PURiffcorona.thrm} (provided $\kappa_{a+b}$ is sufficiently large depending on $\kappa$) and (iv) holds provided $\eta_{a + b} \le  (\gamma_a/2)$. \\

\noindent {\bf Case 2:} $\sigma(G) \ge (\gamma_a/2)\sigma(Q_0)$. We break down this case further.\\

\noindent {\bf Case 2a:} Case 2 holds and $\widetilde{\mathcal{F}} = \{Q_0\}$. In this case, recalling that we have used Lemma \ref{extraplem.lem}, we must have $Q_0 \notin \widetilde{\mathcal{B}}$, otherwise we would violate \eqref{Corona-bad-cubes}. Examining the definition of $\widetilde{\mathcal{B}}$, it follows that
\[ \mut\big(\dd(Q_0)\setminus \{Q_0\}\big) \le a\,\sigma(Q_0).\]
Thus, by pigeonholing, there exists a dyadic child of $Q_0$, which we denote by $Q_0'$, such that  
\[\mut\big(\dd(Q_0')\big) \le a\,\sigma(Q_0).\]
In particular, the induction hypothesis holds for $Q_0'$. Therefore, we may apply $H(a)$ to $Q_0'$ to find a bi-Lipschitz map satisfying all of the desired properties, provided that $\kappa_{a + b}^* \ge C \kappa_{a}$, where $C$ depends on dimension and ADR -- this accounts for the fact that 
\[\diam(Q_0') \le \diam(Q_0) \lesssim \diam(Q_0').\]
Also, we use that 
\[\sigma(Q_0' \cap F_{Q_0'}(B({\Xbf}_1, \kappa_a^*\diam(Q_0')) \cap P_{Q_0'})) \ge \eta_a\sigma(Q_0') \approx \eta_a\sigma(Q_0).\]

\noindent{\bf Case 2b:} Case 2 holds and $\widetilde{\mathcal{F}} \neq \{Q_0\}$. In this case, our intention is to use Lemma \ref{lem:main}. By a basic covering lemma, we can extract a family of cubes $\mathcal{G}' = \{Q_i'\} \subseteq \widetilde{\mathcal{G}}$ such that 
\[\dist(\widetilde{Q}_i, \widetilde{Q}_j)  \ge N \max\{\diam(\widetilde{Q}_i), \diam(\widetilde{Q}_j)\} \quad \mbox{ for } i \neq j, \]
and
\[\sigma(\cup_i \widetilde{Q}_i) \gtrsim \sigma(\cup_i Q_i) = \sigma(G),\]
where the implicit constant depends on $N$, dimension and ADR. Moreover, by the definition of $\widetilde{G} = \widetilde{\mathcal{F}} \setminus \widetilde{\mathcal{B}}$, we have (as in Case 2a) that, for every $\widetilde{Q}_i$ there exists a child $\widetilde{Q}_i'$ for which the induction hypothesis $H(a)$ holds. Moreover, it holds that  
\[\dist(\widetilde{Q}_i', \widetilde{Q}_j')  \ge N \max\{\diam(\widetilde{Q}_i'), \diam(\widetilde{Q}_j')\}\]
and 
\begin{equation}\label{eq:piunionlwbd}
\sigma(\cup_i \widetilde{Q}_i') \gtrsim \sigma(\cup_i Q_i) = \sigma(G),
\end{equation}
where the implicit constant depends on $N$, dimension and ADR. For the sake of notation we will write $\{R_i\} = \{\widetilde{Q}_i'\}$. Clearly, the collection $\{R_i\}$ satisfies the separation condition in Lemma \ref{lem:main}, that is, \eqref{eq:sepinmainlem}. Applying the induction hypothesis H(a) to each $\{R_i\}$, there exists a $t$-independent plane $P_{R_i}$ and bi-Lipschitz map such $F_{R_i} \colon P_{R_i} \to \RR^{n+1}$ such that Lemma \ref{lem:main} (i)-(iii) are satisfied with parameters $(L, M) = (L_a,M_a)$ and $\tilde{\kappa} =\kappa_a^*$. Moreover, if $\Xbf_{i,1} = \pi_{P_{R_i}}(X_{R_i},t_{R_i})$ denotes the projection of the `center' of $R_i$ onto $P_{R_i}$, we have 
\[\sigma(R_i \cap F_{R_i}(B({\Xbf}_{i,1}, \kappa_a^*\diam(R_i)) \cap P_{R_i})) \ge \eta_a\sigma(R_i).\]
By \eqref{eq:piunionlwbd} and the disjointedness of $\{R_i\}$, this yields
\begin{equation}\label{eq:Riunionimbd}
\sigma\left(\bigcup_i \left( R_i \cap F_{R_i}\left(B({\Xbf}_{i,1}, \kappa_a^*\diam(Q_0)) \cap P_{R_i}\right) \right)  \right) \gtrsim \eta_a\sigma(G) \gtrsim \sigma(Q_0),
\end{equation}
where, in the final inequality, we used that we are in Case 2.

The last ingredient we need for Lemma \ref{lem:main} is a $t$-independent plane $P_0$ and regular Lip(1,1/2) function $\theta \colon P_0 \to \RR$ which satisfies the hypothesis therein.  As in Case 1, we use Lemma \ref{impobcor.lem} to say that there exists a stopping time regime $\sbf_0$ such that  $Q \in \sbf_0$ whenever $Q \in \mathbb{D}_{\widetilde{\mathcal{F}}}(Q_0)$.
If we set $\sbf_0' = \sbf_0 \cap \dd(Q_0)$, then $\sbf_0'$ is a stopping time regime that is a subset of $\sbf_0$ and 
$Q \in \sbf_0'$ whenever $Q \in \mathbb{D}_{\widetilde{\mathcal{F}}}(Q_0)$. Theorem \ref{PURiffcorona.thrm} provides a $t$-independent plane $P_0$ and regular Lip(1,1/2) function $\theta \coloneqq \psi_{\sbf_0'} \colon P_0 \to \RR$ which satisfy \eqref{eq:closeplanecoronalem} and \eqref{eq:supportgraphlem1} with constant $\tilde{\kappa} = \kappa^*_a$ (so long as $\kappa^*_a \geq \kappa$, with $\kappa$ as in Theorem \ref{PURiffcorona.thrm}), and whose graph $\Gamma_{\sbf_0'}$ satisfies \eqref{closegraphcorona.eq}, i.e., 
\[\sup_{(X,t)\in 10 Q} \dist(X,t,\Gamma_{\sbf'_0} )\, + \,  \sup_{(Y,s)\in B((X_Q,t_Q),10 \diam(Q)) \cap \Gamma_{\sbf'_0}} \dist(Y,s, \Sigma) \,
\leq \diam(Q)\]
for all $Q \in \mathbb{D}_{\widetilde{\mathcal{F}}}(Q_0)$ (recall that we have applied Theorem \ref{PURiffcorona.thrm} with $L=1$). Thus, \eqref{eq:closegraphcoronalem} clearly holds for $Q = Q_0$. Furthermore, since the parent of $R_i$ is $\widetilde{Q}_i$, which is in $\widetilde{\mathcal{F}}$, the grandparent of $R_i$ (which is the parent of $\widetilde{Q}_i$) is in $\mathbb{D}_{\widetilde{\mathcal{F}}}(Q_0)$. Thus, \eqref{eq:closegraphcoronalem} holds for the grandparent of each $R_i$. 

Now, choosing $N$ large enough, we may apply Lemma \ref{lem:main} to a obtain a map $F^*:P_0 \to \mathbb{R}^{n+1}$ that satisfies (i)$^*$-(iii)$^*$ in Lemma \ref{lem:main} with parameters $(\kappa^*,L^*,M^*)$. In particular, all but property (iv) of $H(a+b)$ are satisfied for $F_{Q_0} = F^*$. To obtain item (iv) of $H(a+b)$, note that Lemma \ref{lem:main} (iv)$^*$ implies
\[\cup_i F_{R_i}(B({\Xbf}_{i,1}, \kappa_a^*\diam(R_i)) )\cap P_{R_i})) \subset F^*(P_0 \cap B(\Xbf_1, \kappa^*\diam(Q_0)),\]
where $\Xbf_1$ is the projection of $(X_{Q_0},t_{Q_0})$ onto $P_0$. 
Therefore, by \eqref{eq:Riunionimbd}, it holds that
\[F^*(P_0 \cap B(\Xbf_1, \kappa^*\diam(Q_0) \cap Q_0) \gtrsim \sigma(Q_0).\]

Now having shown that $H(a)$ holds for all $a$, we have proved Theorem \ref{t:UR-implies-BPLI}. Indeed, as we have observed in \eqref{carlconst.eq}, $\mut(\dd(Q)) \le C_0\sigma(Q)$ holds for {\it all} cubes $Q$. Therefore, (ii) and (iv) in $H(C_0)$ hold for all cubes $Q$.

{\bf Statement on AI use:} AI tools (ChatGPT and Claude) were used in the editing and writing of the manuscript. ChatGPT was used to identify the name of the Givens rotation and their explicit definition. The proof scheme and the arguments in the manuscript were provided by the authors.

\bibliography{ParaBPLIrefs}

\newcommand{\etalchar}[1]{$^{#1}$}
\begin{thebibliography}{BHH{\etalchar{+}}23b}

\bibitem[AS12]{AS-HardSard}
Jonas Azzam and Raanan Schul.
\newblock Hard {S}ard: quantitative implicit function and extension theorems
  for {L}ipschitz maps.
\newblock {\em Geom. Funct. Anal.}, 22(5):1062--1123, 2012.

\bibitem[Azz21]{azzam2021semi}
Jonas Azzam.
\newblock Semi-uniform domains and the {{\(A_\infty\)}} property for harmonic
  measure.
\newblock {\em Int. Math. Res. Not.}, 2021(9):6717--6771, 2021.

\bibitem[BH17]{BH-BP}
Simon Bortz and Steve Hofmann.
\newblock Harmonic measure and approximation of uniformly rectifiable sets.
\newblock {\em Rev. Mat. Iberoam.}, 33(1):351--373, 2017.

\bibitem[BHH{\etalchar{+}}22]{BHHLN-BP}
Simon Bortz, John Hoffman, Steve Hofmann, Jose~Luis Luna-Garcia, and Kaj
  Nystr\"om.
\newblock Coronizations and big pieces in metric spaces.
\newblock {\em Ann. Inst. Fourier (Grenoble)}, 72(5):2037--2078, 2022.

\bibitem[BHH{\etalchar{+}}23a]{BHHLN-corona}
S.~Bortz, J.~Hoffman, S.~Hofmann, J.~L. Luna-Garcia, and K.~Nystr\"om.
\newblock Corona decompositions for parabolic uniformly rectifiable sets.
\newblock {\em J. Geom. Anal.}, 33(3):Paper No. 96, 67, 2023.

\bibitem[BHH{\etalchar{+}}23b]{BHHLN-CME}
Simon Bortz, John Hoffman, Steve Hofmann, Jos\'e{}~Luis Luna~Garc\'ia, and Kaj
  Nystr\"om.
\newblock Carleson measure estimates for caloric functions and parabolic
  uniformly rectifiable sets.
\newblock {\em Anal. PDE}, 16(4):1061--1088, 2023.

\bibitem[BHH{\etalchar{+}}26]{BHHLN-SIO}
Simon Bortz, John Hoffman, Steve Hofmann, Jos\'e~Luis Luna~Garc\'ia, and Kaj
  Nystr\"om.
\newblock Parabolic singular integrals with nonhomogeneous kernels.
\newblock {\em Bull. Lond. Math. Soc.}, 2026.
\newblock To appear.

\bibitem[BHMN25]{BHMN1}
Simon Bortz, Steve Hofmann, Jos\'e{}~Mar\'ia Martell, and Kaj Nystr\"om.
\newblock Solvability of the {${\rm L}^p$} {D}irichlet problem for the heat
  equation is equivalent to parabolic uniform rectifiability in the case of a
  parabolic {L}ipschitz graph.
\newblock {\em Invent. Math.}, 239(1):165--217, 2025.

\bibitem[Chr90]{Christ-cubes}
Michael Christ.
\newblock A {$T(b)$} theorem with remarks on analytic capacity and the {C}auchy
  integral.
\newblock {\em Colloq. Math.}, 60/61(2):601--628, 1990.

\bibitem[CMM82]{CMM}
R.~R. Coifman, A.~McIntosh, and Y.~Meyer.
\newblock L'int\'egrale de {C}auchy d\'efinit un op\'erateur born\'e{} sur
  {$L\sp{2}$}\ pour les courbes lipschitziennes.
\newblock {\em Ann. of Math. (2)}, 116(2):361--387, 1982.

\bibitem[Dah77]{Dahl-L2}
Bj\"orn E.~J. Dahlberg.
\newblock Estimates of harmonic measure.
\newblock {\em Arch. Rational Mech. Anal.}, 65(3):275--288, 1977.

\bibitem[Dav88]{David-cubes}
Guy David.
\newblock Morceaux de graphes lipschitziens et int\'egrales singuli\`eres sur
  une surface.
\newblock {\em Rev. Mat. Iberoamericana}, 4(1):73--114, 1988.

\bibitem[DJ90]{DJ}
G.~David and D.~Jerison.
\newblock Lipschitz approximation to hypersurfaces, harmonic measure, and
  singular integrals.
\newblock {\em Indiana Univ. Math. J.}, 39(3):831--845, 1990.

\bibitem[DS91]{DS-Ast}
G.~David and S.~Semmes.
\newblock Singular integrals and rectifiable sets in {${\bf R}^n$}: {B}eyond
  {L}ipschitz graphs.
\newblock {\em Ast\'erisque}, (193):152, 1991.

\bibitem[DS93a]{DS-AMS}
Guy David and Stephen Semmes.
\newblock {\em Analysis of and on uniformly rectifiable sets}, volume~38 of
  {\em Mathematical Surveys and Monographs}.
\newblock American Mathematical Society, Providence, RI, 1993.

\bibitem[DS93b]{david1993quantitative}
Guy David and Stephen Semmes.
\newblock Quantitative rectifiability and {Lipschitz} mappings.
\newblock {\em Trans. Am. Math. Soc.}, 337(2):855--889, 1993.

\bibitem[Eng17]{Eng-parafbp}
Max Engelstein.
\newblock A free boundary problem for the parabolic {P}oisson kernel.
\newblock {\em Adv. Math.}, 314:835--947, 2017.

\bibitem[HJ26]{HJ-SIO}
John Hoffman and Ben Jaye.
\newblock On singular integrals and quantitative rectifiability in parabolic
  space and the heisenberg group.
\newblock {\em arXiv preprint}, 2026.
\newblock arXiv:2510.26934.

\bibitem[HK12]{HK-cubes}
Tuomas Hyt\"onen and Anna Kairema.
\newblock Systems of dyadic cubes in a doubling metric space.
\newblock {\em Colloq. Math.}, 126(1):1--33, 2012.

\bibitem[HL96]{HL-ann}
Steve Hofmann and John~L. Lewis.
\newblock {$L^2$} solvability and representation by caloric layer potentials in
  time-varying domains.
\newblock {\em Ann. of Math. (2)}, 144(2):349--420, 1996.

\bibitem[HLN03]{HLN1}
Steve Hofmann, John~L. Lewis, and Kaj Nystr\"om.
\newblock Existence of big pieces of graphs for parabolic problems.
\newblock {\em Ann. Acad. Sci. Fenn. Math.}, 28(2):355--384, 2003.

\bibitem[HLN04]{HLN2}
Steve Hofmann, John~L. Lewis, and Kaj Nystr\"om.
\newblock Caloric measure in parabolic flat domains.
\newblock {\em Duke Math. J.}, 122(2):281--346, 2004.

\bibitem[HM14]{HM-I}
Steve Hofmann and Jos\'e{}~Mar\'ia Martell.
\newblock Uniform rectifiability and harmonic measure {I}: {U}niform
  rectifiability implies {P}oisson kernels in {$L^p$}.
\newblock {\em Ann. Sci. \'Ec. Norm. Sup\'er. (4)}, 47(3):577--654, 2014.

\bibitem[Hof97]{Hof-SIO}
Steve Hofmann.
\newblock Parabolic singular integrals of {C}alder\'on-type, rough operators,
  and caloric layer potentials.
\newblock {\em Duke Math. J.}, 90(2):209--259, 1997.

\bibitem[Jon88]{jones1988lipschitz}
Peter~W. Jones.
\newblock Lipschitz and bi-{Lipschitz} functions.
\newblock {\em Rev. Mat. Iberoam.}, 4(1):115--121, 1988.

\bibitem[Jon90]{Jones-salesman}
Peter~W. Jones.
\newblock Rectifiable sets and the traveling salesman problem.
\newblock {\em Invent. Math.}, 102(1):1--15, 1990.

\bibitem[KT97]{KT1}
Carlos~E. Kenig and Tatiana Toro.
\newblock Harmonic measure on locally flat domains.
\newblock {\em Duke Math. J.}, 87(3):509--551, 1997.

\bibitem[KT99]{KT2}
Carlos~E. Kenig and Tatiana Toro.
\newblock Free boundary regularity for harmonic measures and {P}oisson kernels.
\newblock {\em Ann. of Math. (2)}, 150(2):369--454, 1999.

\bibitem[KT03]{KT3}
Carlos~E. Kenig and Tatiana Toro.
\newblock Poisson kernel characterization of {R}eifenberg flat chord arc
  domains.
\newblock {\em Ann. Sci. \'Ecole Norm. Sup. (4)}, 36(3):323--401, 2003.

\bibitem[KW80]{KW-counter}
Robert Kaufman and Jang~Mei Wu.
\newblock Singularity of parabolic measures.
\newblock {\em Compositio Math.}, 40(2):243--250, 1980.

\bibitem[LM95]{Lew-Mur-Mem}
John~L. Lewis and Margaret A.~M. Murray.
\newblock The method of layer potentials for the heat equation in time-varying
  domains.
\newblock {\em Mem. Amer. Math. Soc.}, 114(545):viii+157, 1995.

\bibitem[MM97]{mattila1997measure}
Pertti Mattila and R.~Daniel Mauldin.
\newblock Measure and dimension functions: {Measurability} and densities.
\newblock {\em Math. Proc. Camb. Philos. Soc.}, 121(1):81--100, 1997.

\bibitem[Rig19]{Rigot}
S\'everine Rigot.
\newblock Quantitative notions of rectifiability in the {H}eisenberg groups.
\newblock 2019.
\newblock Preprint, arXiv:1904.06904.

\end{thebibliography}
\bibliographystyle{alpha}

\end{document}